%!TEX TS-program = latex
\documentclass[11pt]{article} % default font size is 10pt
\usepackage{latexsym}
\usepackage{amsmath,amsthm,amssymb,amsfonts,amscd}
\usepackage{graphicx}
\usepackage{epstopdf}
\usepackage{longtable}
\usepackage{multirow}
\usepackage{float}
\usepackage{hyperref}
\usepackage{stmaryrd}
\usepackage{fancyhdr}
\usepackage{color}
\usepackage{url}
\usepackage{subcaption}
\usepackage[all]{xy}
\usepackage{endnotes}
\usepackage{mathtools}
\usepackage{epstopdf}

\theoremstyle{plain}
\newtheorem{thm}{Theorem}[section]
\newtheorem{cor}[thm]{Corollary}
\newtheorem{lem}[thm]{Lemma}
\newtheorem*{lem*}{Lemma}
\newtheorem{prop}[thm]{Proposition}

\theoremstyle{remark}
\newtheorem{exa}{Example}[section]

\theoremstyle{remark}
\newtheorem{rem}{Remark}[section]

\theoremstyle{remark}
\newtheorem{defn}{Definition}[section]

\newcommand{\degree}{\ensuremath{^\circ}}
\newcommand{\Across}{\raisebox{-0.25\height}{\includegraphics[width=0.5cm]{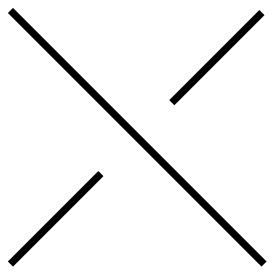}}}
\newcommand{\Asmooth}{\raisebox{-0.25\height}{\includegraphics[width=0.5cm]{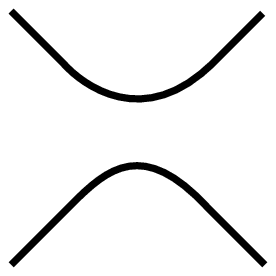}}}
\newcommand{\Bsmooth}{\raisebox{-0.25\height}{\includegraphics[width=0.5cm]{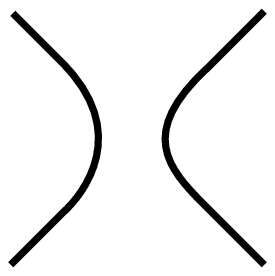}}}

\newcommand{\Rcurl}{\raisebox{-0.25\height}{\includegraphics[width=0.5cm]{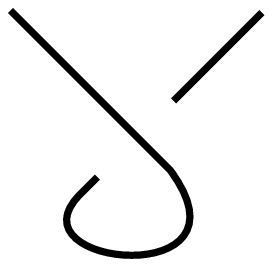}}}
\newcommand{\Lcurl}{\raisebox{-0.25\height}{\includegraphics[width=0.5cm]{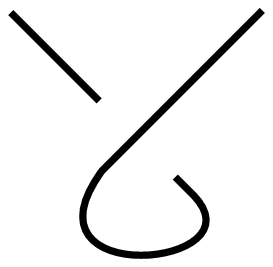}}}
\newcommand{\Arc}{\raisebox{-0.25\height}{\includegraphics[width=0.5cm]{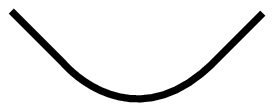}}}
\newcommand{\rgA}{\raisebox{-0.45\height}{\includegraphics[width=0.6cm]{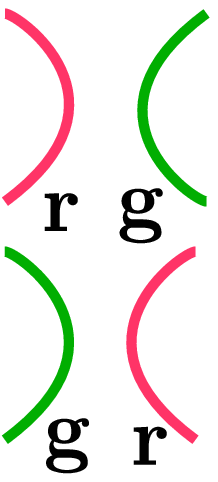}}}
\newcommand{\rgB}{\raisebox{-0.4\height}{\includegraphics[width=0.7cm]{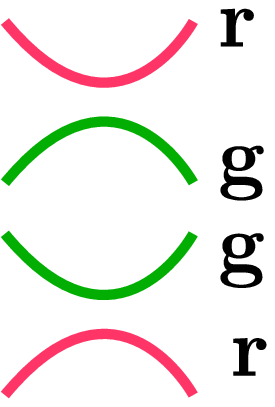}}}
\newcommand{\rrggA}{\raisebox{-0.45\height}{\includegraphics[width=0.6cm]{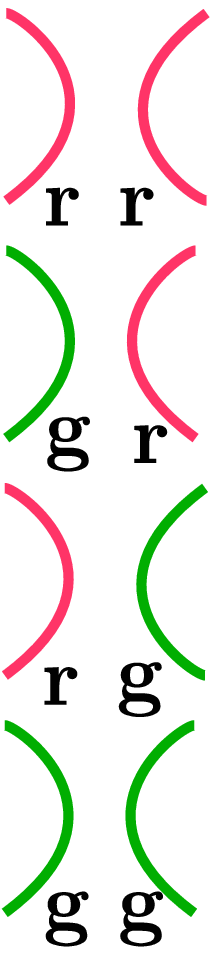}}}
\newcommand{\rrggB}{\raisebox{-0.4\height}{\includegraphics[width=0.7cm]{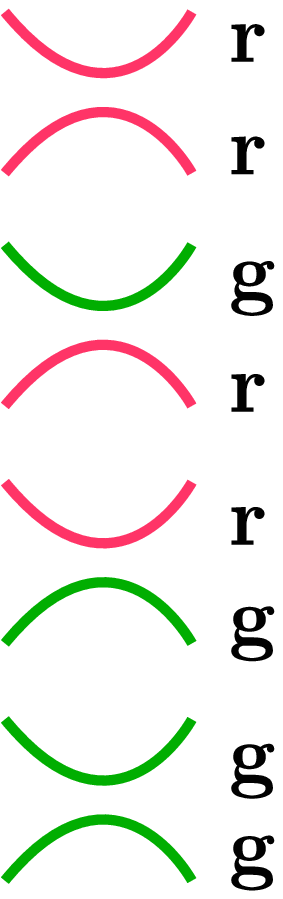}}}
\newcommand{\saddler}{\raisebox{-0.45\height}{\includegraphics[width=0.6cm]{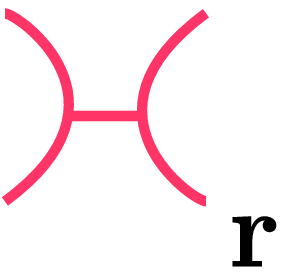}}}
\newcommand{\saddleg}{\raisebox{-0.45\height}{\includegraphics[width=0.6cm]{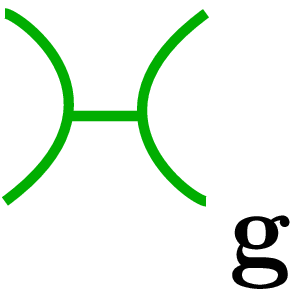}}}
\newcommand{\RIb}{\raisebox{-0.25\height}{\includegraphics[height=.9cm]{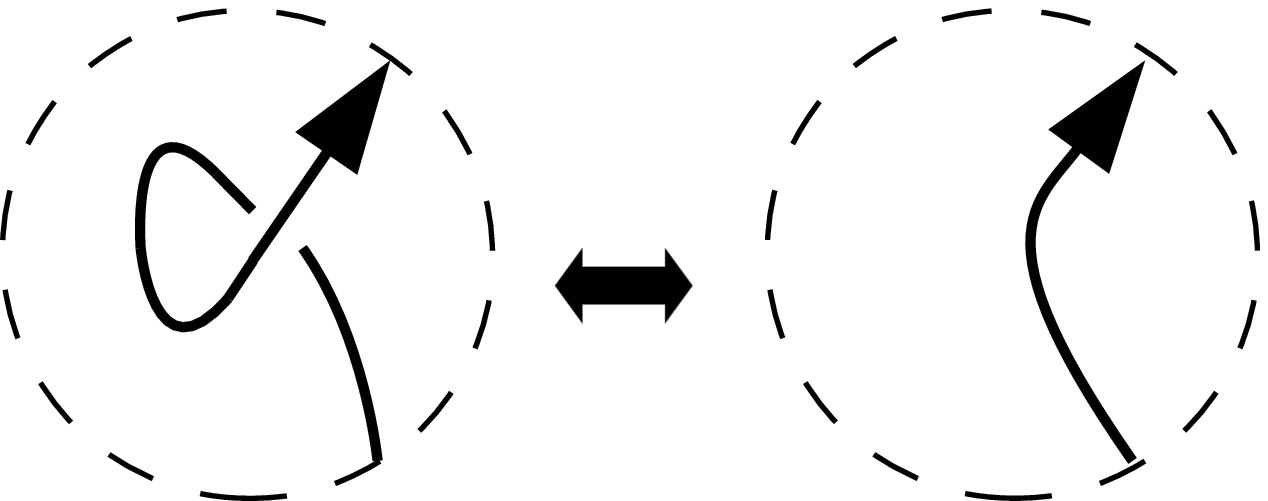}}}
\newcommand{\RId}{\raisebox{-0.25\height}{\includegraphics[height=.9cm]{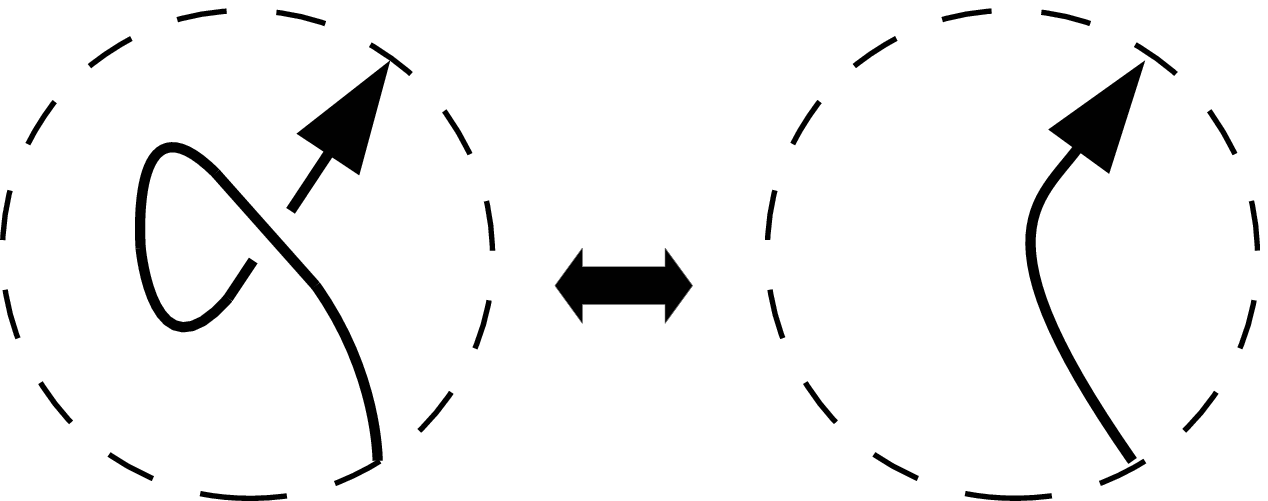}}}
\newcommand{\RIIc}{\raisebox{-0.25\height}{\includegraphics[height=.9cm]{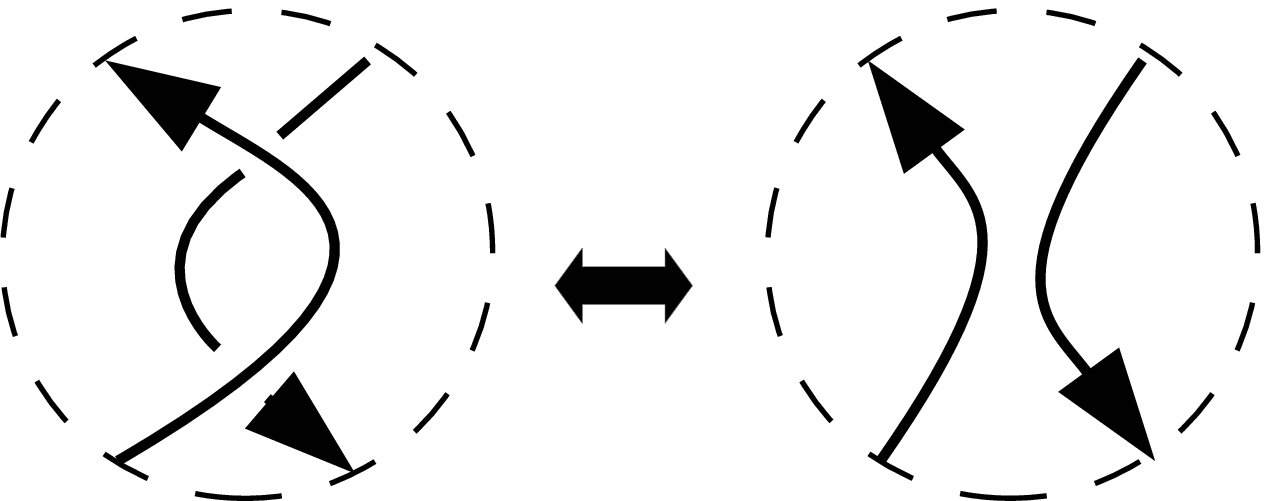}}}
\newcommand{\RIId}{\raisebox{-0.25\height}{\includegraphics[height=.9cm]{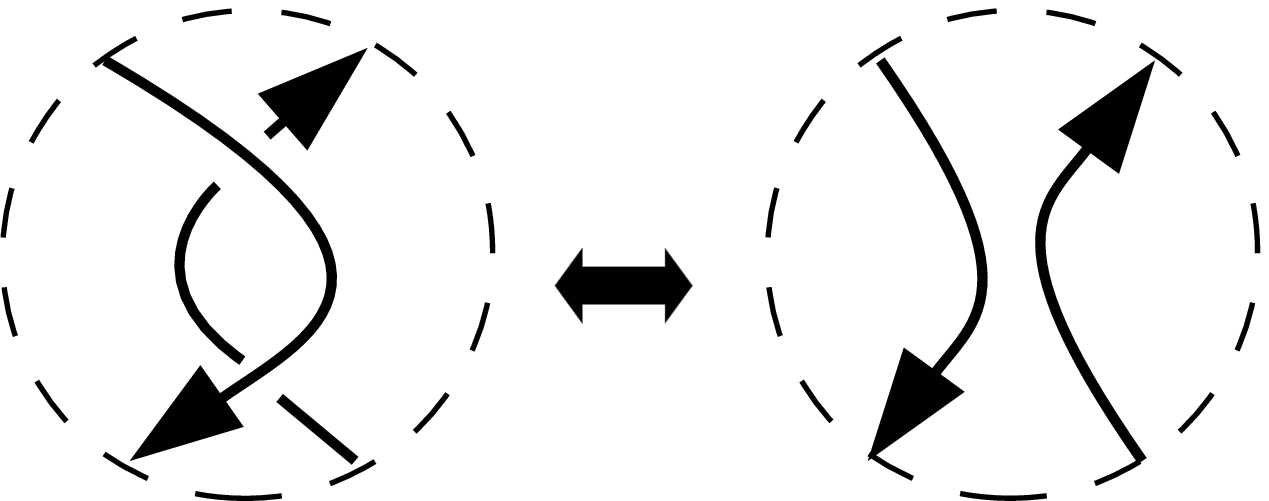}}}
\newcommand{\RIIIb}{\raisebox{-0.25\height}{\includegraphics[height=.9cm]{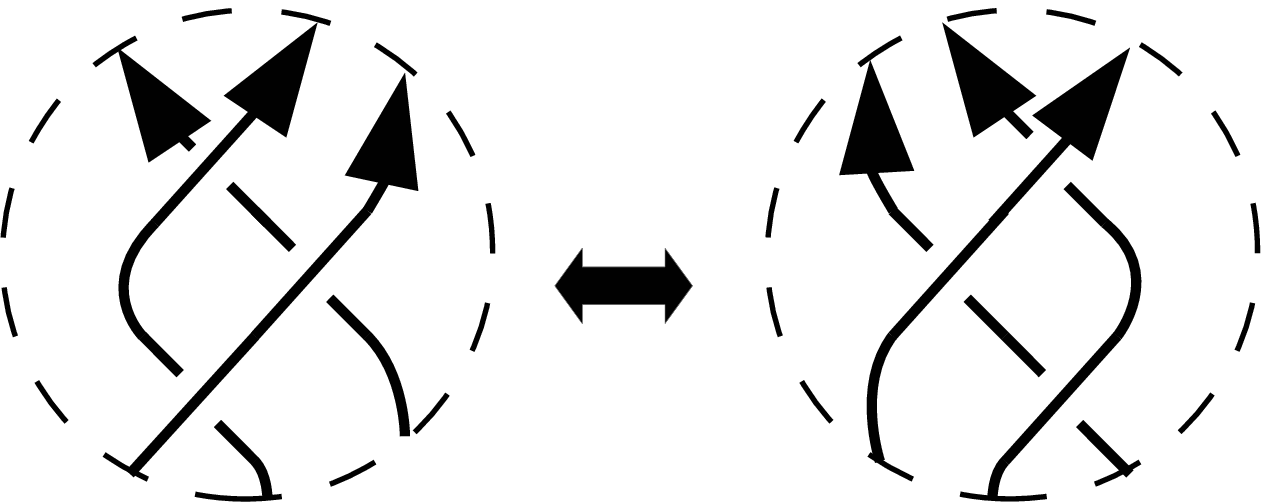}}}
\newcommand{\RIbLeft}{\raisebox{-0.25\height}{\includegraphics[height=.9cm]{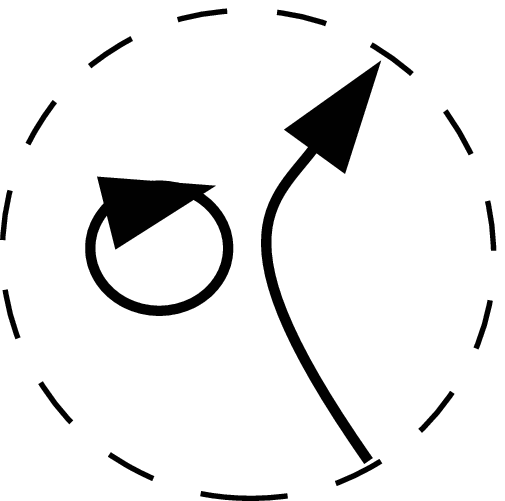}}}

\newcommand{\RIIcLeft}{\raisebox{-0.25\height}{\includegraphics[height=.9cm]{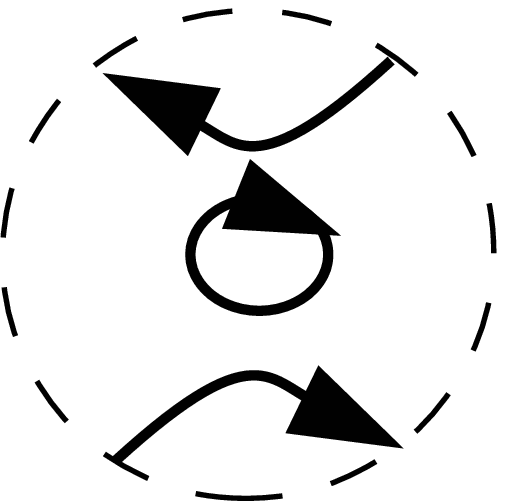}}}

\newcommand{\RIIIbLeft}{\raisebox{-0.25\height}{\includegraphics[height=.9cm]{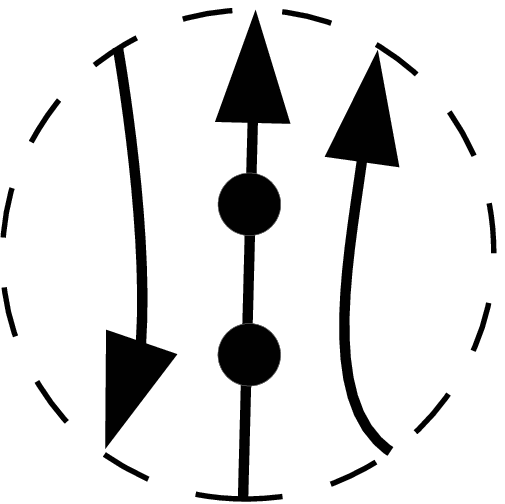}}}
\newcommand{\RIbRight}{\raisebox{-0.25\height}{\includegraphics[height=.9cm]{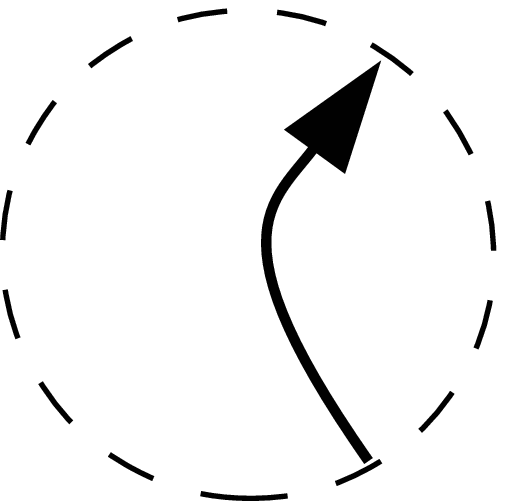}}}

\newcommand{\RIIcRight}{\raisebox{-0.25\height}{\includegraphics[height=.9cm]{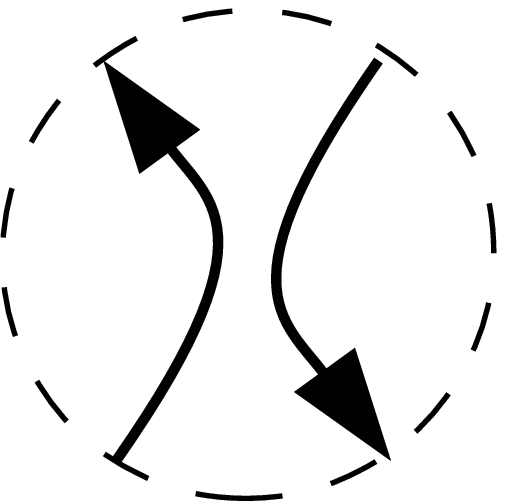}}}

\newcommand{\RIIIbRight}{\raisebox{-0.25\height}{\includegraphics[height=.9cm]{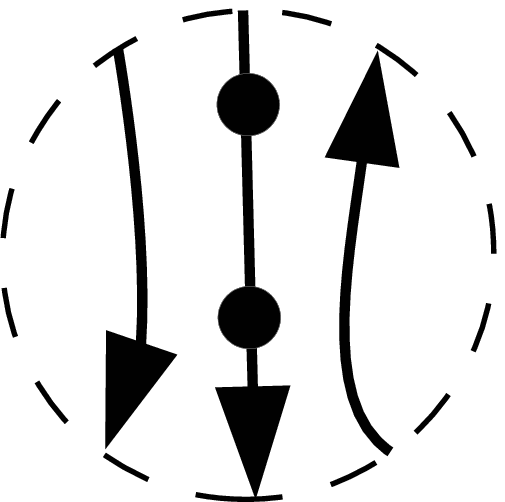}}}

\title{Khovanov Homology, Lee Homology and a Rasmussen Invariant for Virtual Knots\footnote{AMS classification codes: 57M27 , 57M99}}
\renewcommand\footnotemark{}

\author{Heather A. Dye\endnote{Division of Science and Mathematics, McKendree University, 701 College Rd, Lebanon, Illinois 62254, USA}, Aaron Kaestner\endnote{Department of Mathematics, 3225 W Foster Ave, Box 57, North Park University, Chicago, Illinois 60625, USA}, Louis H. Kauffman\endnote{Department of Mathematics, Statistics and Computer Science (m/c 249), 851 South Morgan Street, University of Illinois at Chicago, Chicago, Illinois 60607-7045, USA}}
\date{\today}

\begin{document}

\maketitle

\begin{abstract}
The paper contains an essentially self-contained treatment of Khovanov homology, Khovanov-Lee homology as well as the Rasmussen invariant for virtual knots and virtual knot cobordisms which directly applies as well  to classical knot and classical knot cobordisms. We give an alternate formulation for the Manturov definition \cite{ArbitraryCoeffs} of Khovanov homology \cite{Khovanov1} \cite{Khovanov2} for virtual knots and links with arbitrary coefficients. This approach uses cut loci on the knot diagram to induce a conjugation operator in the Frobenius algebra. We use this to show that a large class of virtual knots with unit Jones polynomial are non-classical, proving a conjecture in \cite{VKT} and \cite{OpenProbsVKT}. We then discuss the implications of the maps induced in the aforementioned theory to the universal Frobenius algebra \cite{KhovanovFrobenius} for virtual knots.  Next we show how one can apply the Karoubi envelope approach of Bar-Natan and Morrison \cite{BarNatanMorrisonKaroubi} on abstract link diagrams \cite{KamadaALD} with cross cuts to construct the canonical generators of the Khovanov-Lee homology \cite{Endomorphism}. Using these canonical generators we derive a generalization of the Rasmussen invariant \cite{Rasmussen} for virtual knot cobordisms and generalize Rasmussen's result on the slice genus for positive knots to the case of positive virtual knots. It should also be noted that this generalization of the Rasmussen invariant provides an easy to compute obstruction to knot cobordisms in $S_g \times I \times I$ in the sense of Turaev \cite{Turaev}.
\end{abstract}

\section{Introduction}
This paper gives a self-contained introduction to Khovanov homology and a generalization of Khovanov homology to virtual knot theory. Virtual knot theory is a natural generalization of classical knot theory to the study of knots in thickened orientable surfaces where embeddings of curves in surfaces are taken up to handle stabilization. We prove a number of new results for virtual knot theory with implications for classical knots. We have made every effort to make the paper self-contained, and hence an introduction to both Khovanov homology, Khovanov-Lee Homology and to the subject of virtual knots and links. The material in this paper will be of interest to topologists as an introduction to these subjects and to specialists for the particular results that we prove.\\

Khovanov homology was discovered by Michail Khovanov \cite{Khovanov1} as a {\it categorification} of the Jones polynomial \cite{JO}. This means that the Jones polynomial can be retrieved from Khovanov homology as a graded Euler characteristic, and indeed the process of forming this homology theory involves up-leveling many aspects of the Kauffman Bracket (state summation) model of the Jones polynomial to objects and morphisms in appropriate categories. The entire collection of states of the bracket polynomial for a given knot diagram becomes a category (see the next section of this paper) and one wants to measure how this category changes under isotopy of the given diagram of the knot. The homology theory is a way of measuring this category of states in such a way that the homology is invariant under isotopies of the knot diagrams (i.e. invariant under the Reidemeister moves). The resulting construction of Khovanov has a long reach and has led to other categorifications of other link invariants and to connections with other areas of mathematics.  {\it Khovanov homology detects the classical unknot.} That is, if $K$ is a classical knot diagram and $K$ has trivial Khovanov homology, then $K$ is isotopic to an unknotted circle. This result was proved by Kronheimer and Mrowka \cite{UnknotDetector} by relating Khovanov homology with a version of Floer homology for knots and hence is a proof using aspects of gauge theory. It remains an open question whether the original Jones polynomial detects classical unknots.\\

Virtual knot theory studies knots in thickened surfaces. It is a natural extension of classical knot theory which studies knots and links in the thickened two dimensional sphere (one can remove a point from Euclidean three-space without disturbing its knot theory). In fact, virtual knot theory uses  a natural extension of the diagrammatic representation of classical knots. Virtual knots can be represented
by diagrams with one extra type of crossing, called virtual. See Figure~\ref{virtseifert} for examples of diagrams with virtual crossings. One can think of a virtual crossing in the following way: Imagine drawing a knot or link diagram on a closed surtace. There will be ordinary knot diagrammatic crossings on this surface. However if you project this surface to a plane then curves that wind around the surface will sometimes create shadow crossings when there is no corresponding crossing on the surface itself. Such crossings in the projection are labeled virtual crossings. Taking diagrams with virtual crossings, one can define a set of moves (See Figure~\ref{fig:VRMs}) for them so that two virtual diagrams are equivalent via these extended moves if and only if the corresponding knots or links in thickened surfaces are stably equivalent (meaning equivalent up to ambient isotopy in the thickened surfaces plus adding or subtracting 1-handles from the thickened surface in the complement of the embedding for the knot or link). The diagrammatic interpretation of virtual knot theory is convenient for many purposes since one can analyze invariants of virtual knots by working with the moves. In particular,
since Khovanov homology is defined in terms of bracket states for the classical diagrams, one can make generalizations of it for virtual knots and links by working with the virtual diagrams. We accomplish this construction in the body of the paper. Our construction of Khovanov homology for virtual knots is a reformulation of the theory developed by Vassily Manturov in \cite{ArbitraryCoeffs}. The reader interested in seeing more background about virtual knot theory can consult \cite{OpenProbsVKT,VKTI,ManturovVirtualKnots}.\\

One of the simplest extensions of an invariant of classical knots to virtual knots is the extension of the bracket polynomial \cite{VKTI}. Here one uses the usual expansion of the bracket polynomial at the classical crossings and evaluates the resulting loops with virtual crossings by counting the number of loops just as in the classical case. This gives rise to a definition of the Jones polynomial for virtual knots. Remarkably there is a construction \cite{VKTI} that yields infinitely many non-trivial virtual knots with unit Jones polynomial. The construction chooses a subset $S$ of crossings from a classical knot diagram $K$ such that the classical knot diagram $S(K)$ obtained by switching all the crossings in $S$ is unknotted (this can always be done with a classical diagram). Then we apply a construction called {\it virtualization} to the crossings in $S$ (replacing a crossing by adding a virtual crossing on either side), as shown in Figure~\ref{f7}, to obtain a virtual knot $Virt(K).$ One can show that $Virt(K)$ is non-trivial and has unit Jones polynomial. This leads the question of finding out whether $Virt(K)$ is classical or non-classical, and if non-classical one would like find out its {\it virtual genus} where the virtual genus of a virtual knot is the least genus thickened surface in which it can be represented. The virtual knots of genus zero are the classical knots.
We have conjectured that when $K$ is a non-trivial classical knot, then all the knots of type $Virt(K)$ have genus greater than zero, that is, they are not classical. If $Virt(K)$ were classical for some $K$, then $Virt(K)$ would be a classical non-trivial knot with unit Jones polynomial.  In this paper we prove that the knots of the form $Virt(K)$ are all of virtual genus greater than one, by using our version of Khovanov homology for virtual knots, coupled with the theorem of Kronheimer and Mrowka that standard Khovanov homology detects the classical unknot. From our point of view this is a good resolution of the problem. It remains to find the genus and other properties of the knots $Virt(K)$ but we now know that they are all non-classical.\\

Rasmussen \cite{Rasmussen}, by using Khovanov homology and a variant due to Lee \cite{Endomorphism} was able to create a new invariant of knots and links and to prove that positive classical links have four-ball genus equal to the genus of the Seifert spanning surface of the link in three dimensional space. Recall that the four-ball genus of a classical link is the least genus of a tamely embedded surface in the four-ball that bounds the link in three-space. This work of Rasmussen gives an elementary proof of a conjecture of Milnor about torus links. In this paper, we give a self-contained treatment of the Rasmussen invariant in the classical case and we generalize it to virtual knots and links. We give a definition of the {\it virtual four-ball genus} of virtual link and a notion of the 
{\it virtual Seifert surface} of a virtual link. We prove an analogue to the Rasmussen Theorem to the effect that a positive virtual link has virtual four-ball genus equal to the genus of its virtual Seifert surface.
This marks the beginning of an investigation of Rasmussen invariants for virtual links that will continue in other papers.\\

The rest of the paper is organized as follows. Section 2 discusses background material, including the bracket polynomial, classical Khovanov homology and the basics of virtual knot theory.
Section 3 constructs Khovanov homology for virtual knots and links and proves the theorem alluded to above about the non-classicality of $Virt(K)$ examples of virtual knots with unit Jones polynomial.
Section 4 constructs Lee Homology for virtual knots and our generalization of the Rasmussen invariant. Section 5 discusses virtual knot cobordism starting from the formulation given in \cite{VKC}.
In Section 6 we then combine this with the work of Section 4 to prove our result about positive virtual links. The appendix handles certain technical points about well-definition of the generalized Khovanov homology.\\ 

\noindent {\bf Remark.} It is curious that the original verision of Khovanov homology for classical knots is tied to the plane in such a way that it must be subtlety modified in order to extend to virtual knots.
This is a property not shared by the $sl(n)$ generalization of Khovanov and Rozansky. Here we have made the best compromise that we know in extending Khovanov homology to virtual and hence to knots embedded in thickened surfaces.\\

\section{Background}

\subsection{Bracket and Jones Polynomials}\label{subsec:BracketJonesPoly}
The bracket polynomial \cite{KaB} model for the Jones polynomial \cite{JO,JO1,JO2,Witten} is usually described by the expansion
$$\langle \Across \rangle=A \langle \Asmooth \rangle + A^{-1}\langle \Bsmooth \rangle$$
Here the small diagrams indicate parts of an otherwise identical larger knot or link diagrams. The two types of smoothing (local diagram with no crossing) in
this formula are said to be of type $A$ ($A$ above) and type $B$ ($A^{-1}$ above).

$$\langle \bigcirc \rangle = -A^{2} -A^{-2}$$
$$\langle K \, \bigcirc \rangle=(-A^{2} -A^{-2}) \langle K \rangle $$
$$\langle \Rcurl \rangle=(-A^{3}) \langle \Arc \rangle $$
$$\langle \Lcurl \rangle=(-A^{-3}) \langle \Arc \rangle $$
One uses these equations to normalize the invariant and make a model of the Jones polynomial.
In the normalized version we define $$f_{K}(A) = (-A^{3})^{-wr(K)} \langle K \rangle / \langle \bigcirc \rangle $$
where the writhe $wr(K)$ is the sum of the oriented crossing signs for a choice of orientation of the link $K.$ Since we shall not use oriented links
in this paper, we refer the reader to \cite{KaB} for the details about the writhe. One then has that $f_{K}(A)$ is invariant under the Reidemeister moves
(again see \cite{KaB}) and the original Jones polynomial $V_{K}(t)$ is given by the formula $$V_{K}(t) = f_{K}(t^{-1/4}).$$ The Jones polynomial has been of great interest since its discovery in 1983 due to its relationships with statistical mechanics, its ability to often detect the difference between a knot and its mirror image, and the many open problems and relationships of this invariant with other aspects of low dimensional topology.
\bigbreak

\subsubsection{The State Summation} In order to obtain a closed formula for the bracket, we now describe it as a state summation.
Let $K$ be any unoriented link diagram. Define a {\em state}, $S$, of $K$  to be the collection of planar loops resulting from  a choice of
smoothing for each  crossing of $K.$ There are two choices ($A$ and $B$) for smoothing a given  crossing, and
thus there are $2^{c(K)}$ states of a diagram with $c(K)$ crossings.
In a state we label each smoothing with $A$ or $A^{-1}$ according to the convention
indicated by the expansion formula for the bracket. These labels are the  {\em vertex weights} of the state.
There are two evaluations related to a state. The first is the product of the vertex weights,
denoted $\langle K|S \rangle .$
The second is the number of loops in the state $S$, denoted  $||S||.$

\noindent Define the {\em state summation}, $\langle K \rangle $, by the formula

$$\langle K \rangle  \, = \sum_{S} <K|S> \delta^{||S||}$$
where $\delta = -A^{2} - A^{-2}.$
This is the state expansion of the bracket. It is possible to rewrite this expansion in other ways. For our purposes in
this paper it is more convenient to think of the loop evaluation as a sum of {\it two} loop evaluations, one giving $-A^{2}$ and one giving
$-A^{-2}.$ This can be accomplished by letting each state curve carry an extra label of $+1$ or $-1.$ We describe these {\it enhanced states} in Section \ref{subsec:EnhancedStates}.

\begin{figure}[H]
\centering
    \includegraphics[width=.5\textwidth]{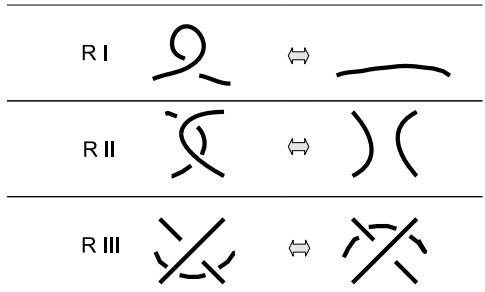}
\caption{Reidemeister moves}
\label{fig:RMs}
\end{figure}

%\begin{figure}[H]
%     \begin{center}
%     \begin{tabular}{c}
%     \includegraphics[width=6cm]{F1.eps}
%     \end{tabular}
%     \caption{\bf Reidemeister moves}
%     \label{fig:RMs}
%\end{center}
%\end{figure}

\subsubsection{Changing Variables}

Letting $c(K)$ denote the number of crossings in the diagram $K$. If we replace $\langle K \rangle$ by $A^{-c(K)} \langle K \rangle$ and $A^2$ by $-q^{-1}$ the bracket is then rewritten in the
following form: $$\langle \Across \rangle=\langle \Asmooth \rangle-q\langle \Bsmooth \rangle $$
with $\langle \bigcirc\rangle=(q+q^{-1})$.
It is useful to use this form of the bracket state sum
for the sake of the grading in the Khovanov homology (to be described below). We shall
continue to refer to the smoothings labeled $q$ (or $A^{-1}$ in the
original bracket formulation) as {\it $B$-smoothings}.
\bigbreak

We catalog here the resulting behaviour of this modified bracket under the Reidemeister moves.
\begin{align*}
\langle \bigcirc \rangle & = q + q^{-1}\\
\langle K \, \bigcirc \rangle & =(q + q^{-1}) \langle K \rangle \\
\langle \Rcurl \rangle & =q^{-1} \langle \Arc \rangle \\
\langle \Lcurl \rangle & = -q^{2} \langle \Arc \rangle \\
\langle \raisebox{-0.25\height}{\includegraphics[width=0.5cm]{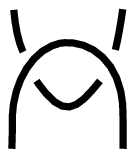}}\rangle  & =-q \langle  \raisebox{-0.50\height}{\includegraphics[width=0.5cm]{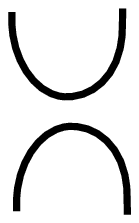}}\rangle\\
\langle \raisebox{-0.25\height}{\includegraphics[width=0.8cm]{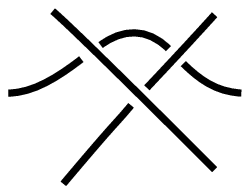}}\rangle  & =
\langle  \raisebox{-0.25\height}{\includegraphics[width=0.8cm]{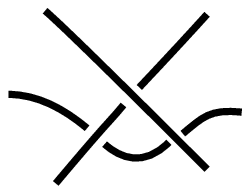}}\rangle
\end{align*}

It follows that if we define $$J_{K}(q) = (-1)^{n_{-}} q^{n_{+} - 2n_{-}} \langle K \rangle,$$
where $n_{-}$ denotes the number of negative crossings in $K$ and $n_{+}$ denotes the number
of positive crossings in $K$, then $J_{K}$ is invariant under all three Reidemeister moves.
Thus $J_{K}$ is a version of the Jones polynomial taking the value $q + q^{-1}$ on an unknotted cycle.
\bigbreak

\subsubsection{Using Enhanced States}\label{subsec:EnhancedStates}
We now use the convention of {\it enhanced
states} where an enhanced state has a label of $1$ or $-1$ on each of
its component loops. We then regard the value of the loop $q + q^{-1}$ as
the sum of the value of a cycle labeled with a $1$ (the value is
$q$) added to the value of a cycle labeled with an $-1$ (the value
is $q^{-1}).$ We could have chosen the less neutral labels of $+1$ and $X$ so that
$$q^{+1} \Longleftrightarrow +1 \Longleftrightarrow 1$$
and
$$q^{-1} \Longleftrightarrow -1 \Longleftrightarrow X,$$
since an algebra involving $1$ and $X$ naturally appears later in relation to Khovanov homology. It does no harm to take this form of labeling from the
beginning. The use of enhanced states for formulating Khovanov homology was pointed out by Oleg Viro \cite{ViroKhoHo}.
\bigbreak

Consider the form of the expansion of this version of the bracket polynomial in enhanced states. We have the formula as a sum over enhanced states $s:$
$$\langle K \rangle = \sum_{s} (-1)^{i(s)} q^{j(s)} $$
where $i(s)$ is the number of $B$-type smoothings in $s$ and $j(s) = i(s) + \lambda(s)$, with $\lambda(s)$ the number of loops  labeled $1$ minus the number of loops labeled $X$ in the enhanced state $s.$
\bigbreak

One advantage of the expression of the bracket polynomial via enhanced states is that it is now a sum of monomials. We shall make use of this property throughout the rest of the paper.

\subsection{Khovanov Homology for Classical Knots}
\subsubsection{The Jones polynomial as the Euler Characteristic of Khovanov homology}

In this section, we describe Khovanov homology along the lines of \cite{Khovanov1} \cite{DrorCat}, and we tell the story so that the gradings and the structure of the differential emerge in a natural way.
This approach to motivating the Khovanov homology uses elements of Khovanov's original approach, Viro's use of enhanced states for the bracket polynomial \cite{ViroKhoHo}, and Bar-Natan's emphasis on tangle cobordisms \cite{DrorCat}\cite{DrorCob}. The first and third named author's used similar considerations in their paper \cite{DKM} and the third author used this approach in his paper \cite{KauKhoHo}. Here we include a similar exposition as in \cite{KauKhoHo} to provide a complete picture and allow us to compare the standard approach to Khovanov homology for classical knots to that of virtual knots.  As we shall see in Section \ref{sec:KhovanovforVirtuals}, the standard approach fails for virtual knots but we will be able to recover the homology theory via an alternate method. Namely the introduction of local and global orientation and orderings, which act analogously to local coefficient systems.
\bigbreak

A key motivation in finding the Khovanov invariant is that one would like to {\it categorify} a link polynomial such as $\langle K \rangle.$ There are many meanings to the term categorify, but here the quest is to find a way to express the link polynomial as a {\it graded Euler characteristic} $\langle K \rangle = \chi_{q} \langle Kh(K) \rangle$ for some homology theory associated with $\langle K \rangle.$
\bigbreak

We will use the bracket polynomial and its enhanced states as described in the previous sections of this paper.
To see how the Khovanov grading arises, consider the form of the expansion of this version of the
bracket polynomial in enhanced states. We have the formula as a sum over enhanced states $s:$
$$\langle K \rangle = \sum_{s} (-1)^{i(s)} q^{j(s)} $$
where $i(s)$ is the number of $B$-type smoothings in $s$, $\lambda(s)$ is the number of loops in $s$ labeled $1$ minus the number of loops
labeled $X,$ and $j(s) = i(s) + \lambda(s)$.
This can be rewritten in the following form:
$$\langle K \rangle  =  \sum_{i \,,j} (-1)^{i} q^{j} dim({\mathcal C}^{ij}) $$
where we define ${\mathcal C}^{ij}$ to be the linear span (over the complex numbers for the purpose of this paper, but over the integers or the integers modulo two for other contexts) of the set of enhanced states with
$i(s) = i$ and $j(s) = j.$ Then the number of such states is the dimension of ${\mathcal C}^{ij}$ denoted:  $dim({\mathcal C}^{ij}).$
\bigbreak

\noindent We would like to have a  bigraded complex composed of the ${\mathcal C}^{ij}$ with a
differential
$$\partial:{\mathcal C}^{ij} \longrightarrow {\mathcal C}^{i+1 \, j}.$$
The differential should increase the {\it homological grading} $i$ by $1$ and preserve the
{\it quantum grading} $j.$
Then we could write
$$\langle K \rangle = \sum_{j} q^{j} \sum_{i} (-1)^{i} dim({\mathcal C}^{ij}) = \sum_{j} q^{j} \chi({\mathcal C}^{\bullet \, j}),$$
where $\chi({\mathcal C}^{\bullet \, j})$ is the Euler characteristic of the subcomplex ${\mathcal C}^{\bullet \, j}$ for a fixed value of $j.$
\bigbreak

\noindent This formula would constitute a categorification of the bracket polynomial. Below, we
shall see how {\it the original Khovanov differential $\partial$ is uniquely determined by the restriction that $j(\partial s) = j(s)$ for each enhanced state
$s$.} Since $j$ is
preserved by the differential, these subcomplexes ${\mathcal C}^{\bullet \, j}$ have their own Euler characteristics and homology. We have
$$\chi(Kh({\mathcal C}^{\bullet \, j})) = \chi({\mathcal C}^{\bullet \, j}) $$ where $Kh({\mathcal C}^{\bullet \, j})$ denotes the Khovanov homology of the complex
${\mathcal C}^{\bullet \, j}$. We can write
$$\langle K \rangle = \sum_{j} q^{j} \chi(Kh({\mathcal C}^{\bullet \, j})).$$
The last formula expresses the bracket polynomial as a {\it graded Euler characteristic} of a homology theory associated with the enhanced states
of the bracket state summation. This is the categorification of the bracket polynomial. Khovanov proves that this homology theory is an invariant
of knots and links (via the Reidemeister moves of Figure \ref{fig:RMs}), creating a new and stronger invariant than the original Jones polynomial.
\bigbreak

We will construct the differential in this complex first for mod-$2$ coefficients. What we do for mod-$2$ immediately generalizes to give the differentials over the integers by adding appropriate signs. The differential is based on regarding two states as {\it adjacent} if one differs from the other by a single smoothing at some site.
Thus if $(s,\tau)$ denotes a pair consisting in an enhanced state $s$ and site $\tau$ of that state with $\tau$ of type $A$, then we consider
all enhanced states $s'$ obtained from $s$ by smoothing at $\tau$ and relabeling only those loops that are affected by the resmoothing.
Call this set of enhanced states $S'[s,\tau].$ Then we shall define the {\it partial differential} $\partial_{\tau}(s)$ as a sum over certain elements in
$S'[s,\tau],$ and the differential by the formula $$\partial(s) = \sum_{\tau} \partial_{\tau}(s)$$ with the sum over all type $A$ sites $\tau$ in $s.$
It then remains to see what are the possibilities for $\partial_{\tau}(s)$ so that $j(s)$ is preserved.
\bigbreak

\noindent Note that if $s' \in S'[s,\tau]$, then $i(s') = i(s) + 1.$ Thus $$j(s') = i(s') + \lambda(s') = 1 + i(s) + \lambda(s').$$ From this
we conclude that $j(s) = j(s')$ if and only if $\lambda(s') = \lambda(s) - 1.$ Using this notation we have
$$\lambda(s) = [s:+] - [s:-]$$ where $[s:+]$ is the number of loops in $s$ labeled $1,$ $[s:-]$ is the number of loops
labeled $X$ and $j(s) = i(s) + \lambda(s)$.
\bigbreak

\begin{prop} The partial differentials $\partial_{\tau}(s)$ are uniquely determined by the condition that $j(s') = j(s)$ for all $s'$
involved in the action of the partial differential on the enhanced state $s.$ This unique form of the partial differential can be described by the
following structures of multiplication and co-multiplication on the algebra $V = k[X]/(X^{2})$ where
 $k = \mathbb{Z}/2\mathbb{Z}$ for mod-2 coefficients, $k = \mathbb{Z}$ for integral coefficients, or $k = \mathbb{C}$ for arbitrary coefficients.

\begin{enumerate}
\item The element $1$ is a multiplicative unit and $X^2 = 0.$
\item $\Delta(1) = 1 \otimes X + X \otimes 1$ and $\Delta(X) = X \otimes X.$
\end{enumerate}
These rules describe the local relabeling process for loops in a state. Multiplication corresponds to the case where two loops merge to a single loop,
while co-multiplication corresponds to the case where one loop bifurcates into two loops.
\end{prop}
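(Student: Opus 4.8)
The plan is to reduce the statement to a short finite case-check, using the relation $\lambda(s') = \lambda(s) - 1$ already established just above as the only input. First I would invoke the standard topological fact that resmoothing a single site $\tau$ of a state $s$ alters the number of planar loops by exactly one: if the two strands at $\tau$ lie on the same loop of $s$ that loop is \emph{split} into two loops of $s'$, and if they lie on different loops those two are \emph{merged} into one loop of $s'$. In both cases only the loops incident to $\tau$ are relabelled, so every loop not meeting $\tau$ contributes the same amount to $\lambda(s)$ and to $\lambda(s')$. Writing $\ell(1) = +1$ and $\ell(X) = -1$ for the contribution of a label to $\lambda$, the condition $j(s') = j(s)$, equivalently $\lambda(s') = \lambda(s)-1$, therefore becomes a constraint purely on the labels of the loops at $\tau$: namely $\ell(c) = \ell(a) + \ell(b) - 1$ in the merge case (inputs $a,b$, output $c$), and $\ell(c) + \ell(d) = \ell(a) - 1$ in the split case (input $a$, outputs $c,d$).

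Next I would solve these two equations over $\{1,X\}$. For the merge: $(a,b) = (1,1)$ forces $c = 1$; $(a,b) \in \{(1,X),(X,1)\}$ forces $c = X$; and $(a,b) = (X,X)$ has no solution since $\ell(a)+\ell(b)-1 = -3 \notin \{\pm 1\}$, so that term of $\partial_{\tau}$ is empty. Reading the labels as the basis $\{1,X\}$ of $V = k[X]/(X^2)$, this is precisely multiplication: $1$ is a unit and $X^2 = 0$. For the split: $a = 1$ forces $\{c,d\} = \{1,X\}$, so \emph{both} admissible relabellings $(1,X)$ and $(X,1)$ occur, while $a = X$ forces $c = d = X$. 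Since $S'[s,\tau]$ contains by definition all relabellings of the affected loops and $\partial_{\tau}(s)$ is the sum of those satisfying $j(s')=j(s)$, this reads off as $\Delta(1) = 1\otimes X + X\otimes 1$ and $\Delta(X) = X\otimes X$, the stated comultiplication; then $\partial = \sum_{\tau} \partial_{\tau}$ assembles these local rules.

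Finally I would observe that this computation \emph{is} the uniqueness assertion: in each local situation the requirement $j(s') = j(s)$ determines the set of admissible output states completely (possibly empty), so once the normalization ``each admissible $s'$ enters with coefficient $1$'' is fixed there is no remaining freedom, and the partial differential --- hence $\partial$ --- is forced. I expect the only real friction to be bookkeeping rather than mathematics: in the split case one must fix an identification of the two new loops with the two tensor factors of $V \otimes V$ and check that the symmetry of $\Delta(1)$ renders the choice immaterial for mod-$2$ coefficients. The sign choices needed to lift everything to $\mathbb{Z}$-coefficients, and the verification that $\partial^2 = 0$, are not part of this proposition and are handled later and in the appendix; here it suffices to treat the label-level statement over $\mathbb{Z}/2\mathbb{Z}$.
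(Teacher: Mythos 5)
Your proposal is correct and follows essentially the same route as the paper: a local case-by-case analysis of the labels on the loops meeting $\tau$, using $\lambda(s') = \lambda(s)-1$ to force the merge case to be the multiplication of $k[X]/(X^2)$ and the split case to be the coproduct $\Delta(1) = 1\otimes X + X\otimes 1$, $\Delta(X) = X\otimes X$. Your rewriting of the constraint as $\ell(c)=\ell(a)+\ell(b)-1$ and $\ell(c)+\ell(d)=\ell(a)-1$ is a cleaner bookkeeping device than the paper's prose, but the underlying argument and case check are the same.
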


\begin{proof}
Using the above description of the differential, suppose that
there are two loops at $\tau$ that merge in the smoothing. If both loops are labeled $1$ in $s$ then the local contribution to $\lambda(s)$ is $2.$
Let $s'$ denote a smoothing in $S[s,\tau].$ In order for the local $\lambda$ contribution to become $1$, we see that the merged loop must be labeled $1$.
Similarly if the two loops are labeled $1$ and $X,$ then the merged loop must be labeled $X$ so that the local contribution for $\lambda$ goes from
$0$ to $-1.$ Finally, if the two loops are labeled $X$ and $X,$ then there is no label available for a single loop that will give $-3,$ so we define
$\partial$ to be zero in this case. We can summarize the result by saying that there is a multiplicative structure $m$ such that
$m(1 \otimes 1) = 1, m(1\otimes X) = m(X\otimes 1) = X, m(X\otimes X) = 0,$ and this multiplication describes the structure of the partial differential when two loops merge.
Since this is the multiplicative structure of the algebra $V = k[X]/(X^{2}),$ we take this algebra as summarizing the differential.
\bigbreak

Now consider the case where $s$ has a single loop at the site $\tau.$ Smoothing produces two loops. If the single loop is labeled $X,$ then we must label
each of the two loops by $X$ in order to make $\lambda$ decrease by $1$. If the single loop is labeled $1,$ then we can label the two loops by
$X$ and $1$ in either order. In this second case we take the partial differential of $s$ to be the sum of these two labeled states. This structure
can be described by taking a coproduct structure with $\Delta(X) = X \otimes X$ and
$\Delta(1) = 1 \otimes X + X \otimes 1.$
We now have the algebra $V = k[X]/(X^{2})$ with product $m: V \otimes V \longrightarrow V$ and coproduct
$\Delta: V \longrightarrow V \otimes V,$ describing the differential completely.
\end{proof}

Partial differentials are defined on each enhanced state $s$ and a site $\tau$ of type
$A$ in that  state. We consider states obtained from the given state by  smoothing the given site $\tau$. The result of smoothing $\tau$ is to
produce a new state $s'$ with one more site of type $B$ than $s.$ Forming $s'$ from $s$ we either amalgamate two loops to a single loop at $\tau$, or
we divide a loop at $\tau$ into two distinct loops. In the case of amalgamation, the new state $s$ acquires the label on the amalgamated cycle that
is the product of the labels on the two cycles that are its ancestors in $s$. This case of the partial differential is described by the
multiplication in the algebra. If one cycle becomes two cycles, then we apply the coproduct. Thus if the cycle is labeled $X$, then the resultant
two cycles are each labeled $X$ corresponding to $\Delta(X) = X \otimes X$. If the original cycle is labeled $1$ then we take the partial boundary
to be a sum of two enhanced states with  labels $1$ and $X$ in one case, and labels $X$ and $1$ in the other case,  on the respective cycles. This
corresponds to $\Delta(1) = 1 \otimes X + X \otimes 1.$ Modulo two, the boundary of an enhanced state is the sum, over all sites of type $A$ in the
state, of the partial boundaries at these sites. It is not hard to verify directly that the square of the  boundary mapping is zero (this is the identity of mixed partials!) and that it behaves
as advertised, keeping $j(s)$ constant. There is more to say about the nature of this construction with respect to Frobenius algebras and tangle cobordisms.

\begin{figure}[h!]
     \begin{center}
     \begin{tabular}{c}
     \includegraphics[width=6cm]{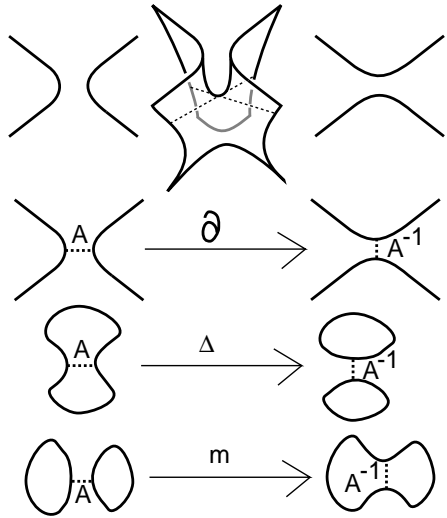}
     \end{tabular}
     \caption{Saddle points and state smoothings}
     \label{fig:SaddlePointStateSmoothing}
\end{center}
\end{figure}

\begin{figure}[h!]
     \begin{center}
     \begin{tabular}{c}
     \includegraphics[width=8cm]{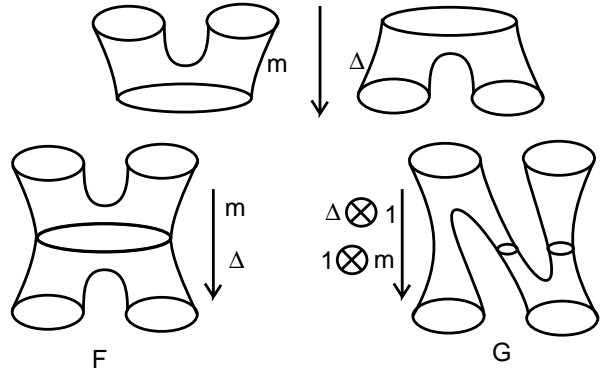}
     \end{tabular}
     \caption{Surface cobordisms}
     \label{fig:SurfaceCobordism}
\end{center}
\end{figure}

%\begin{figure}[H]
%     \begin{center}
%     \begin{tabular}{c}
%     \includegraphics[width=9cm]{F4.eps}
%     \end{tabular}
%     \caption{Local boundaries commute}
%     \label{fig:LocalBoundariesCommute}
%\end{center}
%\end{figure}

\begin{figure}[h!]
\centering
    \includegraphics[width=.7\textwidth]{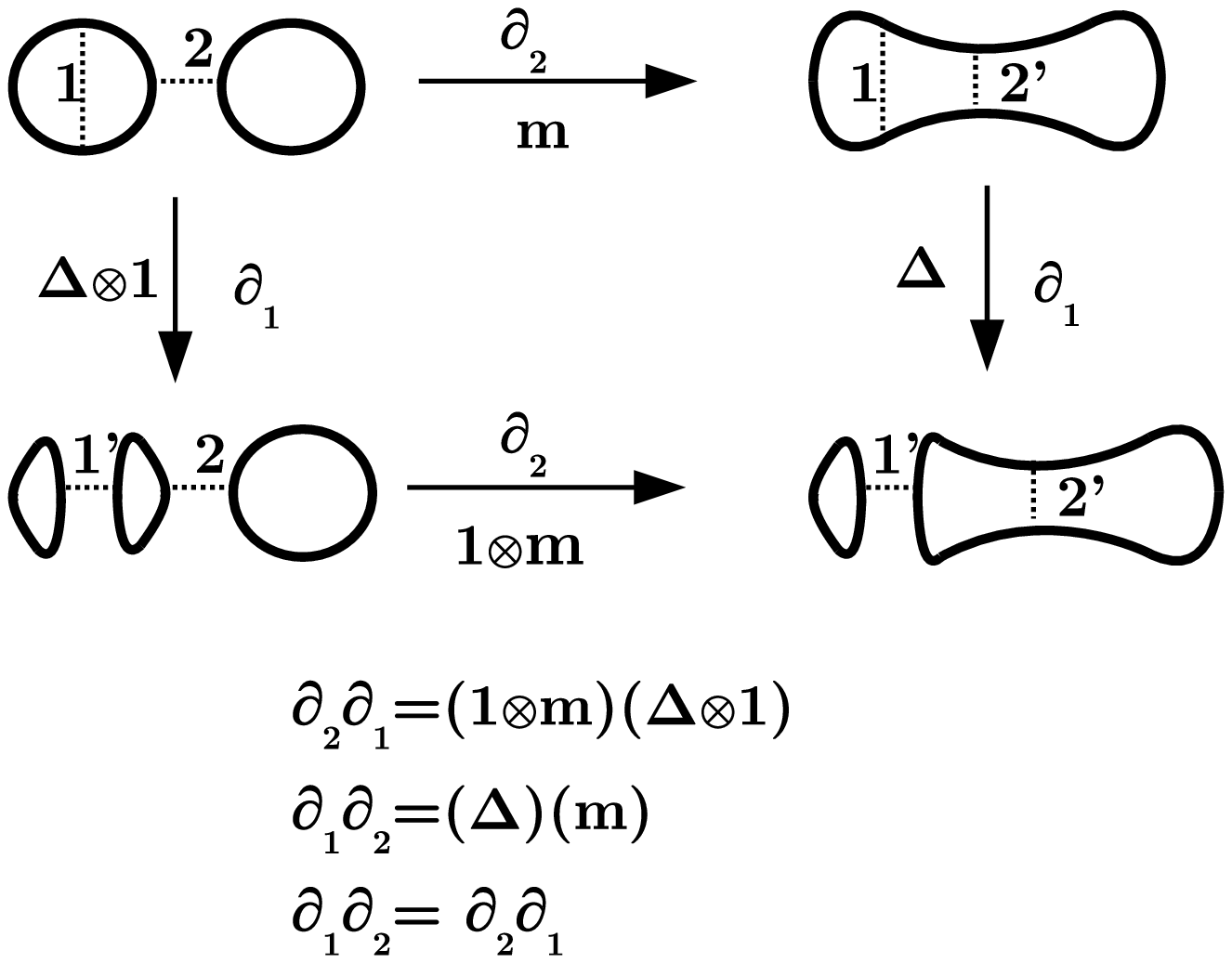}
\caption{Local boundaries commute}
\label{fig:LocalBoundariesCommute}
\end{figure}

In Figures \ref{fig:SaddlePointStateSmoothing}, \ref{fig:SurfaceCobordism}, and \ref{fig:LocalBoundariesCommute} we illustrate how the partial boundaries can be conceptualized in terms of surface cobordisms. Figure \ref{fig:SaddlePointStateSmoothing} shows how the partial boundary corresponds to a saddle point and illustrates the two cases of fusion and fission of cycles. The equality of mixed
partials corresponds to topological equivalence of the corresponding surface cobordisms, and to the relationships between Frobenius algebras \cite{Kock} and the
surface cobordism category. In particular, in Figure \ref{fig:LocalBoundariesCommute} we show how in a key case of two sites (labeled 1 and 2 in that Figure) the two orders of partial
boundary are $$\partial_{2} \partial_{1} = (1 \otimes m) \circ (\Delta \otimes 1)$$ and
$$\partial_{1} \partial_{2} = \Delta \circ m.$$ In the Frobenius algebra $V = k[X]/(X^{2})$ we have the identity
$$(1 \otimes m) \circ (\Delta \otimes 1) = \Delta \circ m.$$ Thus the Frobenius algebra implies the identity of the mixed partials.
Furthermore, in Figure \ref{fig:SurfaceCobordism} we see that this identity corresponds to the topological equivalence of cobordisms under an exchange of saddle points.
\bigbreak

\begin{figure}[h!]
\centering
    \includegraphics[width=.7\textwidth]{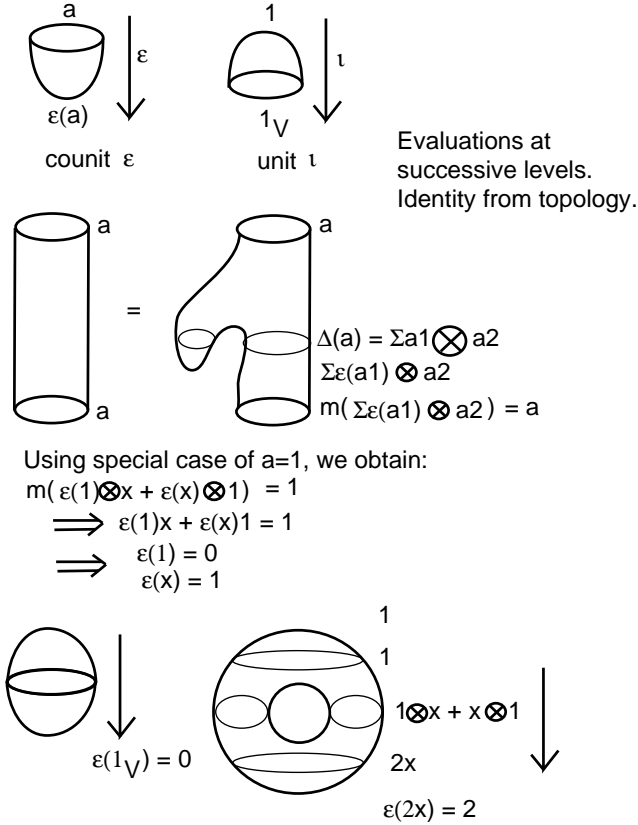}
\caption{Unit and counit as cobordisms}
\label{fig:UnitCounitCobordism}
\end{figure}

%\begin{figure}[H]
%     \begin{center}
%     \begin{tabular}{c}
%     \includegraphics[width=8cm]{Tube0.eps}
%     \end{tabular}
%     \caption{\bf Unit and Counit as Cobordisms}
%     \label{fig:UnitCounitCobordism}
%\end{center}
%\end{figure}

\begin{figure}[h!]
     \begin{center}
     \begin{tabular}{c}
     \includegraphics[width=8cm]{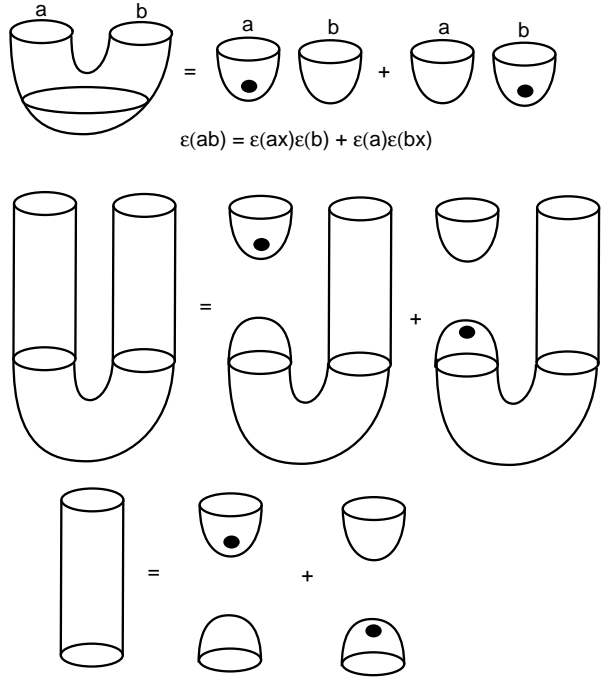}
     \end{tabular}
     \caption{The tube cutting relation}
     \label{fig:TubeCuttingRelation}
\end{center}
\end{figure}

In Figures \ref{fig:UnitCounitCobordism} and \ref{fig:TubeCuttingRelation} we show another aspect of this algebra. As Figure \ref{fig:UnitCounitCobordism} illustrates, we can consider cup (minimum)  and cap (maximum)  cobordisms that go between the empty set and a single cycle.
With the categorical arrow going down the page, the cap is a mapping from the base ring $k$ to
the module $V$ and we denote this mapping by $\iota: k \longrightarrow V$. It is the {\em unit} for the algebra $V$ and is defined by $\iota(1) = 1_{V},$ taking $1$ in $k$ to $1_{V}$ in $V.$ The cup is a mapping from $V$ to $k$ and is denoted by $\epsilon: V \longrightarrow k.$ This is the {\em counit}.
As Figure \ref{fig:UnitCounitCobordism} illustrates, we need a  basic identity about the counit which reads
$$\Sigma \epsilon(a_{1})a_{2} = a$$ for any $a \in V$ where
$$\Delta(a) =  \Sigma a_{1} \otimes a_{2}.$$

We will often supress the summation symbol and write $\Delta(a) = a_{1} \otimes a_{2}.$ The summation is over an appropriate set of elements in $V \otimes V$ as in our specific formulas for the algebra $k[X]/(X^{2}).$ Of course we also demand
$$\Sigma a_{1}\epsilon(a_{2}) = a$$ for any $a \in V.$ With these formulas about the counit and unit in
place, we see that cobordisms will give equivalent algebra when one cancels a maximum or a minimum with a saddle point, again as shown in Figure \ref{fig:UnitCounitCobordism}.
\bigbreak

Note that for our algebra $V = k[X]/(X^{2}),$ it follows from the counit identities of the last paragraph that
$$\epsilon(1) = 0$$ and $$\epsilon(X) = 1.$$ In fact, Figure \ref{fig:TubeCuttingRelation} shows a formula that holds in this special algebra. The formula reads $$\epsilon(ab) = \epsilon(aX)\epsilon(b) + \epsilon(a)\epsilon(bX)$$
for any $a,b \in V.$ As the rest of Figure \ref{fig:TubeCuttingRelation} shows, this identity means that a single tube in any cobordism can be cut, replacing it by a cups and a caps in a linear combination of two terms. The tube-cutting
relation is shown in its most useful form at the bottom of Figure \ref{fig:TubeCuttingRelation}. In Figure \ref{fig:TubeCuttingRelation}, the black dots are symbols standing for the special element $X$ in the algebra.
\bigbreak

It is important to note that we have a nonsingular pairing $$\langle ~~|~~ \rangle: V \otimes V \longrightarrow k$$ defined by the equation $$\langle a|b \rangle = \epsilon(ab).$$ One can define a
Frobenius algebra by starting with the existence of a nonsingular bilinear pairing.  In fact, a finite dimensional associative algebra with unit defined over a unital commutative ring  $k$ is said to be
a {\em Frobenius algebra} if it is equipped with a non-degenerate bilinear form
$$\langle ~~|~~ \rangle: V \otimes V \longrightarrow k$$ such that
$$\langle ab | c \rangle = \langle a | bc \rangle$$ for all $a,b,c$ in the algebra. The other mappings and
the interpretation in terms of cobordisms can all be constructed from this definition. See \cite{Kock}.
\bigbreak
\begin{rem}
Now that the mixed partials commute, they will continue to commute over the integers. By adding signs we can make the squares anti-commute.
\end{rem}

\subsubsection{Remark on Grading and Invariance} In Section \ref{subsec:BracketJonesPoly} we showed how the
bracket, using the variable $q$, behaves under Reidemeister moves. These formulas correspond to how
the invariance of the homology works in relation to the moves. We have that
$$J_{K}(q) = (-1)^{n_{-}} q^{n_{+} - 2n_{-}} \langle K \rangle,$$
where $n_{-}$ denotes the number of negative crossings in $K$ and $n_{+}$ denotes the number
of positive crossings in $K.$ $J(K)$ is invariant under all three Reidemeister moves. The corresponding formulas for Khovanov homology are as follows
\begin{align*}
J_{K}(q) &= (-1)^{n_{-}} q^{n_{+} - 2n_{-}} \langle K \rangle\\
& = (-1)^{n_{-}} q^{n_{+} - 2n_{-}} \sum_{i,j} (-1)^{i}q^{j} dim(Kh^{i,j}(K))\\
& = \sum_{i,j} (-1) ^{i + n_{-} }q^{j + n_{+} - 2n_{-} }dim(Kh^{i,j}(K))\\
& = \sum_{i,j} (-1)^{i} q^{j} dim(Kh^{i - n_{-}, j - n_{+} + 2n_{-}}(K)).
\end{align*}
It is often more convenient to define the {\em Poincar\'e polynomial} for Khovanov homology via
\[P_{K}(t, q)
= \sum_{i,j} t^{i} q^{j} dim(Kh^{i - n_{-}, j - n_{+} + 2n_{-}}(K)).\]
The Poincar\'e polynomial is a two-variable polynomial invariant of knots and links, generalizing the Jones polynomial. Each coefficient $$ dim(Kh^{i - n_{-}, j - n_{+} + 2n_{-}}(K))$$ is an
invariant of the knot, invariant under all three Reidemeister moves. In fact, the homology groups
$$Kh^{i - n_{-}, j - n_{+} + 2n_{-}}(K)$$ are knot invariants. The grading compensations show how the grading of the homology can change from diagram to diagram for diagrams that represent the same knot. For notational convenience this is often denoted by $Kh(K) = \llbracket K \rrbracket [-n_{-}]\{ n_{+} - 2n_{-}\}$ \cite{DrorCat}.
\bigbreak

\subsection{Virtual Knot Theory}

Kauffman \cite{VKT} introduced virtual knots and links as a natural extension of classical knot theory.  The following formulations of virtual knot theory can be though of as equivalent:
\begin{enumerate}
\item Equivalence classes of embeddings of closed curves in a thickened surface (possibly non-orientable) up to ambient isotopy and handle stabilization on the surface.
\item Signed oriented Gauss codes taken up to the equivalence relations generated by the abstract Reidemeister moves on such codes.
\end{enumerate}

Diagrammatically we can represent virtual knots as projections of embeddings of closed curves in a thickened surface modulo the Reidemeister moves in Figure \ref{fig:RMs} and virtual Reidemeister moves in Figure \ref{fig:VRMs}.

\begin{figure}[h!]
\centering
    \includegraphics[width=.5\textwidth]{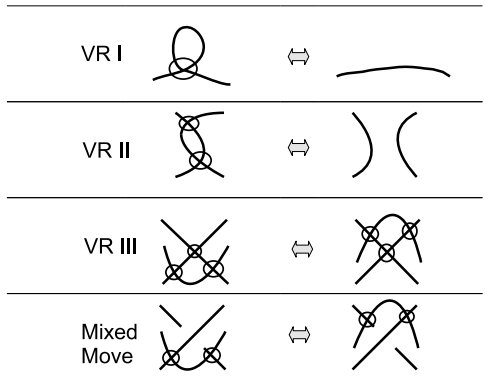}
\caption{Virtual Reidemeister moves}
\label{fig:VRMs}
\end{figure}

\section{Khovanov Homology for Virtual Knots}\label{sec:KhovanovforVirtuals}
\subsection{Single Cycle Smoothings}

Extending Khovanov homology to virtual knots for arbitrary coefficients is complicated by the single cycle smoothing as depicted in Figure \ref{fig:oneonebifurcation}.

\begin{figure}[h!]
\begin{center}
    \includegraphics[width=.4\textwidth]{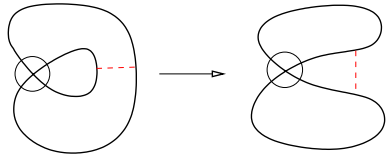}
    \caption{Single cycle smoothing}
\label{fig:oneonebifurcation}
\end{center}
\end{figure}

We define a map for this smoothing $\eta:V \longrightarrow V$. In order to preserve the quantum grading ($j(s') = j(s)$)  $\eta$ is the zero map.

\begin{rem}
Note that algebraic system for Khovanov homology for virtual knots is, strictly speaking, no longer a Frobenius system due to presence of the single-cycle smoothing. Rather it is a Frobenius system plus an $\eta$ map satisfying the property that the resulting homology theory for virtual knots is well-defined. Here we consider only the homology theory related to $\eta(a)=0$ for all $a$ and thus will still call such a system a Frobenius system and the associated algebra a Frobenious algebra. We will be discussing the general case in an upcoming paper.
\end{rem}

Consider the following complex arising from the 2-crossing virtual unknot:

\begin{figure}[h!]
\centering
    \includegraphics[width=.4\textwidth]{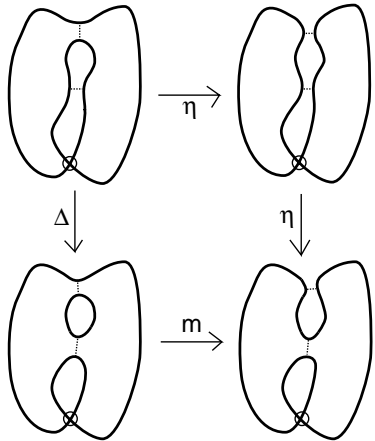}
    \caption{Khovanov complex for the two-crossing virtual unknot}
\label{fig:problemsquare}
\end{figure}

Composing along the top and right we have $$\eta \circ \eta  = 0.$$

But composing along the opposite sides we see $$m \circ \Delta(1) = m(1 \otimes X + X \otimes 1) = X + X = 2X.$$ Hence the complex does not naturally commute or anti-commute.

%\subsection{The Construction for Arbitrary Coefficients}

When the base ring is $\mathbb{Z}_2$ Manturov \cite{ManturovKhovanovZ2} \cite{ManturovVirtualKnots} has shown that the definition of Khovanov homology given in the previous section goes through unchanged. Using this construction, Manturov also showed that one can always take the double of the virtual knot or link to get a definition of Khovanov homology \cite{ManturovKhovanovZ2}. Additionally,  Manturov \cite{ArbitraryCoeffs} introduced a definition of Khovanov homology for (oriented) virtual knots with arbitrary coefficients that does not require doubling. The following is a reformulation of the latter definition using cut loci. In the remainder of this section we provide a detailed explanation of this construction.

\subsection{Source-Sink Orientations and Cut Loci}

Given an oriented virtual knot diagram, assign a \emph {source-sink orientation} at each classical crossing, where a source-sink orientation is defined by the diagram in Figure \ref{fig:sourcesink}. For more on source-sink structures and virtual knots see Maturov \cite{ManturovVassilievConj} \cite{ManturovVirtualKnots}.

\begin{figure}[H]
\centering
    \includegraphics[width=.15\textwidth]{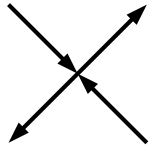}
    \caption{Source-sink orientation}
\label{fig:sourcesink}
\end{figure}

We will use the {\emph canonical source-sink orientation} given in Figure \ref{fig:canonicalsourcesink}.

\begin{figure}[H]
\centering
    \includegraphics[width=.4\textwidth]{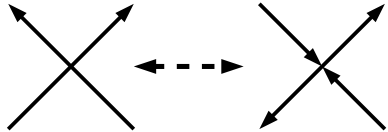}
    \caption{Canonial source-sink orientation}
\label{fig:canonicalsourcesink}
\end{figure}

Using the canonical source-sink orientations we place cut loci on the semi-arcs of the knot diagram whenever neighboring source-sink orientations disagree. For classical knots, the canonical source-sink orientations will never produce cut loci. The proof of this is straightforward when the knot is viewed as a chord diagram.  However for virtual knots, cut loci arise quite naturally.

Figure \ref{fig:examplecutloci} illustrates this process for a two-crossing virtual knot. Note that the number of cut loci is fixed for any particular diagram and the canonical source-sink orientations on the crossings. However, as we will see, the number of cut loci may change when a Reidemeister move is performed.

\begin{figure}[h!]
\centering
    \includegraphics[width=.35\textwidth]{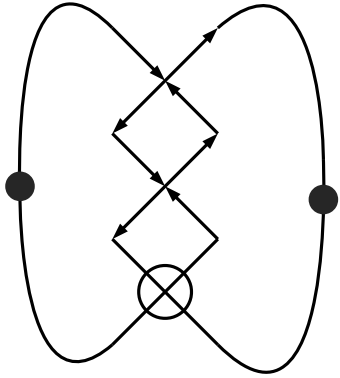}
    \caption{Cut loci for a two-crossing virtual knot}
\label{fig:examplecutloci}
\end{figure}

Source-sink orientations were originally introduced by Naoko Kamada \cite{MR1914297} \cite{MR2100692} and determine a version of checkerboard coloring for virtual knots. We note that the source-sink orientations we use can be regarded as a translation of the oriented chord diagrams of Oleg Viro \cite{ViroKhoHo}.

The source-sink orientations define a \emph{local orientation} on the semi-arcs of each enhanced state in the Khovanov complex. Furthermore, for each enhanced state we select a \emph{global orientation} by (arbitrarily) choosing a marked point, not a cut loci, on the cycle. When necessary we will keep track of the global orientations by placing a ($\star$) on the arcs and allowing the global order to agree with the local order on those arcs. For instance see Figures \ref{fig:orientedproblemsquare} and \ref{fig:orientedorderedproblemsquare}. The proof that this can be done arbitrarily follows from Lemma \ref{lem:cutlocicancel}. It should be noted that this is similar in spirit to reducing matrix factorizations in Khovanov-Rozansky $sl_2$ homology. For more about matrix factorizations and Khovanov-Rozansky $sl_2$ homology see Hughes \cite{Hughessl2}.

When the local and global orientations disagree we introduce a new operation that we call the \emph{bar operation}. The bar operation is an operation on the algebra, not an isomorphism of the algebra,  which acts similarly to complex conjugation and is defined by:
$$ \overline{X} = -X \; , \; \overline{1} = 1. $$

Hence, in analogy to complex conjugation,

$$\overline{a+bX} = a - bX.$$

Furthermore this operation respects multiplication,

$$ \overline{(a + bX)(c+dX)}= (\overline{a + bX})  (\overline{c+dX}).  $$

Considering comultiplication, we recall that $\Delta$ respects scalar multiplication but is not an algebra morphism. Considering the standard basis we see that
\begin{align*}
\Delta ( \overline{1}) & = \Delta (1)
&  \Delta ( \overline{X}) &=  - \Delta (X).
\end{align*}

Furthermore, suppressing summation signs, if $\Delta (a) = a_1 \otimes a_2$ and we denote $\Delta (\overline{a})$ by $\overline{a_1 \otimes a_2}$ then for all choices of $a_1$ and $a_2$

 \begin{align*}
\Delta (\overline{a})=\overline{a_1 \otimes a_2} = -(\overline{a_1} \otimes \overline{a_2}).
 \end{align*}

 For example,
  \begin{align*}
\Delta (\overline{1})=1 \otimes X + X \otimes 1 = -(1 \otimes -X + -X \otimes 1 )= -(\overline{1} \otimes \overline{X} + \overline{X} \otimes \overline{1})
 \end{align*}
 and
  \begin{align*}
\Delta (\overline{X})=-\Delta (X)= -(X \otimes X)= -(\overline{X} \otimes \overline{X}).
 \end{align*}

For each differential $\partial_{\tau}(s)$ we pre- and post-compose the state with the barring operation.
This allows us to take into account sites that disagree with the global orientation.

The number of times the barring operation is performed equals the number of cut loci crossed when traversing a path starting at the smoothing corresponding to the partial differential and ending at the marked point corresponding the global orientation.

\begin{figure}[h!]
\centering \def\svgwidth{3.00 in}
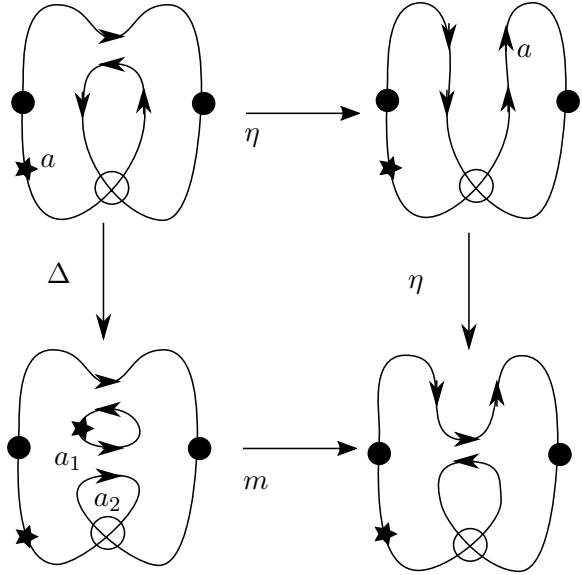
    \caption{Oriented Khovanov complex for the two-crossing unknot}
\label{fig:orientedproblemsquare}
\end{figure}

Now recall our problem square. In Figure \ref{fig:orientedproblemsquare} the global orientation of each component is marked by a star. Traversing a cut loci results in the application of the barring operation. Each component is marked with a global algebra element (for example, $ a $). We compare the global orientation with the local orientation inherited from the source-sink orientation at each smoothing to determine when to apply the barring operation. Following along the top and right hand sides of the diagram we have that $\eta \circ \eta (a) =0$ for all choices of $a$. (Note the choice of global orientation is not important along this composition.) Following along the left and bottom we need to show that here we have $m \circ \Delta (a) = 0$. Looking at $\Delta$ we see the lower smoothing site in the upper left state has local orientation which agrees with the global orientation. Similarly, in the lower left state of the diagram the local orientations agree with the global orientations on both $a_1$ and $a_2$.  Hence in the $\Delta$ map no compositions with the barring operator are performed. Looking at $m$ in the lower left state of the diagram we note that the local orientation agrees with the global orientation on $a_1$ and but disagrees on $a_2$ (i.e. we must transport $a_2$ past one cut loci in order for it to be located at the smoothing site corresponding to $m$).  Hence we pre-compose $m$ with the barring operation on the second element (assuming a lexicographical ordering on the elements in that state). Finally, in the lower right hand state, to get from the smoothing site at $m$ to the star representing the global orientation we must cross a cut loci. Thus we must also post-compose $m$ with the barring operation.

More precisely $$m \circ \Delta(1) = m(1 \otimes \overline{X} + X \otimes \overline{1}) = -\overline{X} + \overline{X} = 0.$$

\begin{rem}
We remind the reader that this result is independent of the placement of stars in Figure \ref{fig:orientedproblemsquare}. Had the stars been placed at a different location the only change would be the pre- and post-composition with the bar operation. These changes will occur in pair corresponding to incoming and outgoing differentials. Hence the final result will remain unchanged.
\end{rem}

% I'M NOT SURE WHAT IS BEING SAID HERE. WHERE ARE WE SWITCHING? THIS IS CONFUSING TO ME AND PROBABLY THE READER - AK
%Note that if the global orientation is switched, we see that
% $ \Delta ( \overline{1} ) = 1 \otimes \overline{X} + \overline{X} \otimes 1 $ and $ \Delta (
% \overline{X} ) = \overline{X} \otimes X$.

%WE AREN'T TALKING ABOUT ORDERING YET. WE NEED TO MOTIVATE IT BY SHOWING THE 90-DEGREE LEMMA DOESN'T WORK WITHOUT IT - AK
%However, we will need to introduce a pre- or post-composition (that will be applied to the differential) that is determined by a comparison of the local and global basis ordering.

Recall that it would be enough to show that all faces commute. In fact this is the standard approach taken with Khovanov homology for classical knots. In the classical knot case one can then sprinkle signs appropriately through out the cube complex to produce a well defined homology theory. In Figure \ref{fig:problemsquare} we have shown that this approach is not possible for virtual knots due to the presence of the single-cycle smoothing.

\begin{rem} Guided by the observation mentioned in Figure \ref{fig:orientedproblemsquare} we no longer want to show that faces commute, but rather we want to show directly that all faces anti-commute (i.e. $\partial^2 = 0$), negating the need for the sprinkling of signs. We proceed by attempting to show that all faces of the complex anti-commute through the introduction of the bar operation. This  procedure will fail, and hence our use of the term ``attempt'' in the upcoming lemma. It is insightful to see how and why this procedure fails in order to motivate the additional structure introduced in the following section, namely local and global bases.
\end{rem}

The set of all two crossing diagrams (without over-under markings) is shown in Figure \ref{fig:essentialknots}.

\begin{figure}[h!]
\centering
\def\svgwidth{2.75 in} 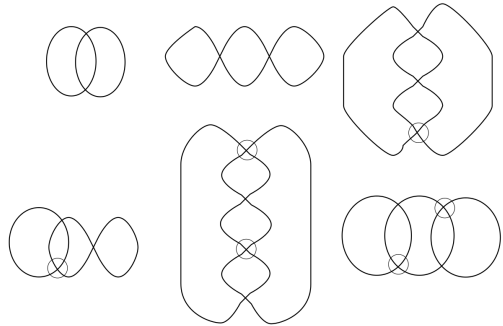
    \caption{Knot diagrams with two flat crossings}
\label{fig:essentialknots}
\end{figure}

 First, consider the oriented versions of these two crossing diagrams. We may reduce the number of cases we need to check by noting that many of the two crossing complexes commute due to the appearance of $\eta$ maps along both paths in the complex. Only the oriented versions of the 6 essential atoms in Figure \ref{fig:essentialatoms} result in squares where the composition of maps in the complexes do not commute due to the presence of these zero maps. (For more on atoms see Manturov \cite{ManturovKnotTheory}.) We have already shown one case in Figure \ref{fig:orientedproblemsquare}. It is left as an exercise to check the remaining five cases.

\begin{figure}[h!]
\centering
\def\svgwidth{3.00 in} 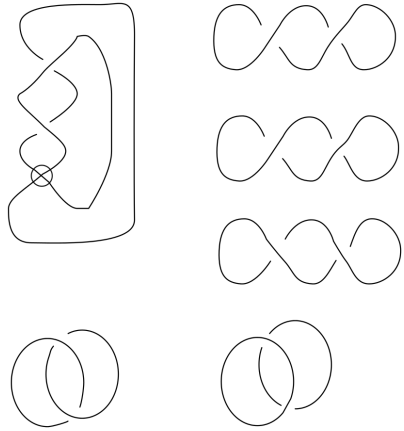
    \caption{The 6 essential atoms}
\label{fig:essentialatoms}
\end{figure}

Now recall that not all of the faces on a Khovanov complex arise from oriented two crossing diagrams. Many contra-oriented faces are obtained through smoothing in a larger complex as, for example, in Figure \ref{fig:contraface}.

\begin{figure}[h!]
\centering
    \includegraphics[width=.35\textwidth]{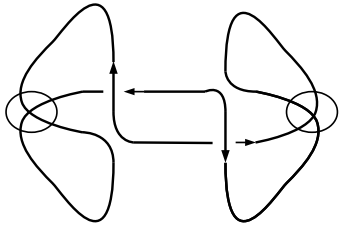}
\caption{Contra-oriented face arising from smoothing a Kishino's knot}
\label{fig:contraface}
\end{figure}

Rather than tackling the remaining cases individually we consider the action of a $90 \degree$ rotation at a crossing as in Figure \ref{fig:clockwiserotation}.

\begin{figure}[H] \centering
\[ \begin{array}{c}
 \def\svgwidth{0.50 in}
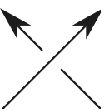 \end{array}
   \rightarrow
   \begin{array}{c} \def\svgwidth{0.50 in} 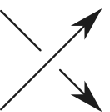 \end{array} \]
    \caption{$90 \degree $  rotation of a crossing}
\label{fig:clockwiserotation}
\end{figure}

Here we attempt to show that a $90 \degree$ rotation at a crossing does not effect the commutativity of the face of the Khovanov complex. With the current structure this method will fail. However it is informative to see why this fails. This attempt at a proof will motivate the additional structure introduced in the following section.

\begin{lem*}[$90 \degree$ Lemma]\label{lem:90firsttry}

A $90 \degree$ rotation on the orientation of a crossing does not change the anti-commutativity of a face of a Khovanov complex. Furthermore every face of a Khovanov complex is anti-commutative.
\end{lem*}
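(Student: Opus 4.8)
The plan is to argue in two stages. First I would establish a base case: verify directly that every face of the Khovanov complex anti-commutes for the canonical source-sink data, by checking it on the building blocks. As noted in the discussion preceding the lemma, the only faces whose two composites are not already forced to vanish by a double occurrence of the zero map $\eta$ are the oriented refinements of the six essential atoms of Figure \ref{fig:essentialatoms}; one of these is worked out in Figure \ref{fig:orientedproblemsquare}, and I would dispatch the remaining five by the same bookkeeping — at each of the two smoothing sites compare the local orientation (inherited from the source-sink structure) with the global orientation at the marked point, insert the bar operation once for each cut locus crossed on the path from the site to the marked point, and compute both composites built from $m$, $\Delta$ and $\eta$. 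Using $\overline{X}=-X$, $\overline{1}=1$, the compatibility of the bar with $m$, and $\Delta(\overline a)=-(\overline{a_1}\otimes\overline{a_2})$, one checks in each case that the two composites differ by the sign $-1$, so the face anti-commutes.

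Second — and this is where the real content lies — I would show that a $90\degree$ rotation at a single crossing $c$ (Figure \ref{fig:clockwiserotation}) preserves anti-commutativity of every face; since any admissible source-sink labelling is obtained from the canonical one by toggling a subset of crossings, this reduces the general statement to the base case. Rotating $c$ flips the local orientation on the two semi-arcs incident to $c$, and by the definition of cut loci this adds or deletes a cut locus at the $c$-end of each of those semi-arcs, and, when $c$ is one of the two sites of the face in question, also changes which loops count as agreeing with the local orientation inside $\partial_c$. I would then run through the faces in cases: (i) faces not involving $c$, where the only effect is a change in the number of bar operations on differentials that touch the affected arcs, and the extra bars cancel around the square by the pairing of incoming and outgoing differentials (the Remark following Figure \ref{fig:orientedproblemsquare}); and (ii) faces having $c$ as one of their two sites, where one must track the new bars through both orders $\partial_{\tau}\partial_{c}$ and $\partial_{c}\partial_{\tau}$ and verify the overall sign is unchanged, using the bar identities above together with Lemma \ref{lem:cutlocicancel}, which guarantees that each cycle carries an even number of cut loci so the bar count is well defined up to the choice of marked point.

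The hard part — and the step I expect to obstruct a complete proof — is exactly case (ii) for the \emph{contra-oriented} faces that occur only inside larger cubes, such as those produced by smoothing Kishino's knot in Figure \ref{fig:contraface}. When a comultiplication $\Delta$ is followed by a multiplication $m$ and the two output tensor factors of $\Delta$ must be transported past \emph{different} numbers of cut loci before they reach the $m$-site, the identity $\overline{a_1\otimes a_2}=-(\overline{a_1}\otimes\overline{a_2})$ forces a sign that depends on how many tensor factors lie beyond a given cut locus, hence on an arbitrary linear ordering of the loops and arcs of the intermediate state. For some contra-oriented faces the ordering that makes $\partial_{\tau}\partial_{c}$ come out with sign $-1$ is incompatible with the ordering needed for $\partial_{c}\partial_{\tau}$, and the residual sign cannot be absorbed by the bar operation alone. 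I therefore expect this attempt to succeed on the six essential atoms but to fail in general at precisely this point — which is exactly why one is led, in the following section, to equip each state with a local as well as a global ordering (local and global bases), so that transporting tensor factors past cut loci carries a canonical sign and every face genuinely anti-commutes.
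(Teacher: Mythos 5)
Your plan mirrors the paper's own treatment quite closely in outline: the paper also checks the six essential atoms directly (leaving five as an exercise after doing Figure \ref{fig:orientedproblemsquare}), also uses the $90\degree$ rotation to reach the contra-oriented faces, and also presents this whole argument explicitly as an \emph{``Attempt of Proof''} that must fail, deferring the genuine proof to Section \ref{subsec:LGOrder}. So you are right to predict a breakdown and right that local/global orders are the cure.

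Where you diverge is in \emph{why} the rotation step breaks. The paper's failure is a sharp, clean calculation with no ambiguity involved: writing down the isomorphism $i_1, i_2$ between the pre-rotated and rotated columns, one finds that under a $90\degree$ rotation (with only the bar operation) multiplication transforms trivially, $m(\overline a\otimes\overline b)=\overline{ab}$ so $m\mapsto m$, while comultiplication picks up a sign, $\Delta(\overline a)=-(\overline{a_1}\otimes\overline{a_2})$ so $\Delta\mapsto-\Delta$. Since rotation is not sign-uniform on the two types of saddle, it carries some anti-commutative faces to commutative ones, and the induction collapses. Your proposed obstruction --- that the two factors of $\Delta$ may be transported past \emph{different} numbers of cut loci, so that $\overline{a_1\otimes a_2}=-(\overline{a_1}\otimes\overline{a_2})$ leaves a sign depending on an arbitrary linear ordering of loops --- is a related worry about well-definedness of the bar on tensor products, but it is not the obstruction the paper exhibits, and the paper's obstruction bites even after you fix any convention for barring factors individually. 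Framing the failure that way buries the actual content: the asymmetry is between $m$ (which commutes with the bar) and $\Delta$ (which anti-commutes with it), and the local/global order is introduced precisely so that $m$ also picks up a compensating sign ($i_1(a\otimes b)=-(\overline a\otimes\overline b)$ in Lemma \ref{lem:anticommutativity}), restoring uniformity.

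Two smaller points. Rotating the source-sink orientation of a crossing reverses the local orientation on all four incident semi-arcs, not two, since the alternating in/out pattern is taken to its reverse by a quarter turn. And your case (i) appeal to the Remark following Figure \ref{fig:orientedproblemsquare} is a mild mismatch: that Remark concerns moving the marked point ($\star$) fixing the global orientation, not toggling the source-sink orientation at a crossing; the pairing-and-cancellation intuition is similar but not the statement actually proved there.
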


\noindent{ \textbf{Attempt of Proof:} (The full proof is given in Section \ref{subsec:LGOrder})}

Consider the action of a $90\degree$ rotation on multiplication at a positive crossing. In Figure \ref{fig:RotateMultPositiveUnordered} the top row corresponds to the pre-rotated complex and the lower row corresponds to the complex obtained by a $90 \degree$ rotation. We would like construct an isomorphism sending the complex corresponding to the upper row to the complex in the lower row. Thus since the top complex anti-commutes we have that the lower complex also anti-commutes. Assume the global orientations in both rows agree with the local orientations in the pre-rotated complex so that the maps $i_1$ and $i_2$ are induced by the change in local orientations.  Finally let $a$ and $b$ denote arbitrary algebra elements.

\begin{figure}[H]
\centering
    \includegraphics[width=.7\textwidth]{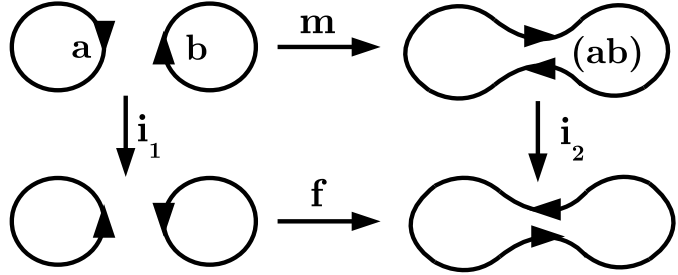}
    \caption{$90^\circ$ rotation on multiplication}
\label{fig:RotateMultPositiveUnordered}
\end{figure}

Here  $i_1(a \otimes b) = (\overline{a} \otimes \overline{b})$ and $i_2 (ab) = \overline{ab}$. To determine the map denoted by $f$ (i.e. the image under the isomorphism of $m$) we note that $m(\overline{a} \otimes \overline{b}) = (\overline{a}\overline{b})$. Since $(\overline{a}\overline{b}) = \overline{ab}$ for all $a$ and $b$ the isomorphism sends $m$ in the upper complex to $m$ in the lower complex.

Similarly, consider the action of a $90\degree$ rotation on co-multiplication at a positive crossing as in Figure \ref{fig:RotateComultPositiveUnordered}.

\begin{figure}[H]
\centering
    \includegraphics[width=.7\textwidth]{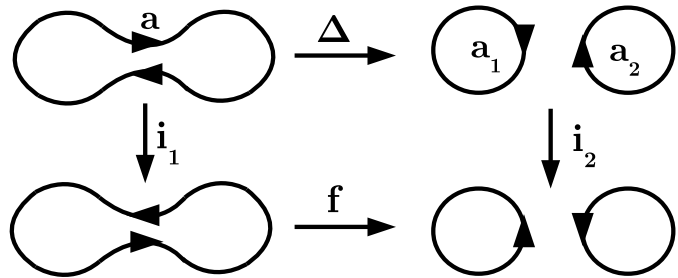}
    \caption{$90^\circ$ rotation on co-multiplication}
\label{fig:RotateComultPositiveUnordered}
\end{figure}

Here $i_1(a)=\overline{a}$ and $i_2(a_1 \otimes a_2) = \overline{a_1} \otimes \overline{a_2}$. To determine the map denoted by $f$ (i.e. the image under the isomorphism of $\Delta$) we recall an earlier observation that $\Delta(\overline{a}) = \overline{a_1 \otimes a_2} = -(\overline{a_1} \otimes \overline{a_2})$. Hence the isomorphism sends $\Delta$ in the upper complex to $-\Delta$ in the lower complex.

Thus if we only use orientations we see that a $90\degree$ rotation sends some anti-commutative faces to anti-commutative faces while sending other anti-commutative faces to commutative faces. While one may be able to find additional methods  allowing for a mix of commutative and anti-commutative faces (the second author thanks Adam Lowrance for pointing out such a method in \cite{OddKhoHo}) we prefer to add a bit more structure to ensure uniformity. We restate and prove this lemma in as Lemma \ref{lem:anticommutativity} in the following section.

\subsection{Local and Global Order}\label{subsec:LGOrder}

%I DON'T THINK WE NEED BOTH OF THE BELOW DESCRIPTIONS. THEY ARE REDUNDANT. - AK
%At each crossing the source-sink orientation induces local orientations on the smoothings.  The source-sink orientation also induces a local ordering on the components of the state.
%
%\begin{figure} [h!]
%\centering \def\svgwidth{5.00 in}
%     \scalebox{0.5}{ \input{inducedsmooth.eps_tex}}
%    \caption{Induced oriented smoothing for positive and negative crossings}
%\label{fig:inducedsmooth}
%\end{figure}

We introduce a local order on the cycles for each smoothing determined by the labels 1 and 2 in Figure \ref{fig:CanonicalOrientation}. As with local and global orientation we will choose an global order for the cycles in each state in the Khovanov complex and pre- and post-compose with maps after a local-to-global comparison. In doing so, we are effectively moving from the tensor product to a Grassmann algebra.

\begin{figure}[H]
\centering
    \includegraphics[width=.25\textwidth]{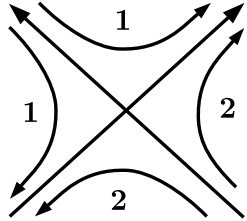}
    \caption{Canonical local orientation and order}
\label{fig:CanonicalOrientation}
\end{figure}

More precisely, the \emph{global order} is an arbitrary choice of labels ${1, 2, \ldots, n}$ for the $n$ cycles in each state of the Khovanov complex. To define the \emph{local order} associated to each smoothing in a state we have two situations to consider.

\begin{itemize}
  \item If the labels 1 and 2 in Figure \ref{fig:CanonicalOrientation} appear on the same cycle we permute the global order so that this cycle corresponds to the first element in the local order and set the the remaining elements of the local order to be in the same relative order as they appeared in the global order.

    \item If the labels 1 and 2 appear on different cycles we permute the global order so that the first and second elements in the local order are those labeled 1 and 2 in Figure \ref{fig:CanonicalOrientation} and set the remaining elements of the local order to be  in the same relative order as they appeared in the global order.
\end{itemize}

We can then, in addition to the pre- and post-compositions induced by the local and global orientations (i.e. the pre- and post- composition by the barring operation resulting from the transport of the algebraic elements across the cut loci), pre- and post-compose the maps $m$ and $\Delta$ with multiplication by the sign of the permutation taking the local order to the global order. (For a global order which is convenient for computation see Appendix \ref{App:AppendixA}.) It should be noted that this could alternately be formalized as a Grassmann algebra on the cycles in each state as was done by Manturov \cite{ArbitraryCoeffs}.

\begin{thm}\label{thm:Anticommute}
Using the pre- and post-compositions induced by orientation and order, all faces of the Khovanov complex for a virtual link diagram \bf{anti-}commute.
\end{thm}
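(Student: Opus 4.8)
## Proof Proposal for Theorem \ref{thm:Anticommute}

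The plan is to reduce the global statement --- that \emph{every} face of the Khovanov cube for an arbitrary virtual link diagram anti-commutes --- to a finite check on a small list of local models, and then to show that the extra sign data (the bar operation coming from cut loci, and the permutation sign coming from the local-to-global order comparison) is exactly what is needed to rigidify the classification of faces. Each square face of the cube is determined by a choice of two sites $\tau_1, \tau_2$ in an enhanced state, and the two composites $\partial_{\tau_2}\partial_{\tau_1}$ and $\partial_{\tau_1}\partial_{\tau_2}$ are each built from the local operations $m$, $\Delta$, and $\eta$. Up to the data that actually matters (how many cycles are incident to each site, whether the sites share cycles, and the source-sink/cut-locus configuration around the two sites), there are only finitely many local pictures; these are organized by the essential atoms of Figure \ref{fig:essentialatoms} together with the possible contra-oriented configurations illustrated by Figure \ref{fig:contraface}. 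So the first step is to make precise the claim that a face of the cube is, up to an overall sign bookkeeping, one of these finitely many local models, with everything outside the two relevant sites contributing identically to both composites and hence cancelling.

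Second, I would handle the ``honestly oriented'' faces --- those arising from oriented two-crossing diagrams --- by the case analysis already begun in the excerpt: the $\eta\circ\eta = 0$ versus $m\circ\Delta$ computation of Figure \ref{fig:orientedproblemsquare} shows the mechanism, and the remaining five essential atoms are checked the same way, using the Frobenius identity $(1\otimes m)\circ(\Delta\otimes 1) = \Delta\circ m$ from the earlier proposition for the faces with no $\eta$, and the explicit $\overline{X} = -X$ computation for the faces where an $\eta$ appears on one side and $m\circ\Delta$ on the other. The key algebraic inputs are the three identities recorded before the theorem: $\overline{(a+bX)(c+dX)} = \overline{a+bX}\cdot\overline{c+dX}$ (so barring commutes with $m$), and $\Delta(\overline a) = -(\overline{a_1}\otimes\overline{a_2})$ (so barring interacts with $\Delta$ up to a sign). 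The sign discrepancy that the bar operation alone could not fix --- seen in the $90\degree$ Lemma attempt, where $\Delta$ rotates to $-\Delta$ while $m$ rotates to $m$ --- is precisely the discrepancy that the permutation sign now absorbs: rotating a crossing by $90\degree$ swaps the roles of the two cycles labelled $1$ and $2$ in Figure \ref{fig:CanonicalOrientation}, which changes the local order by a transposition and contributes a factor $-1$ exactly on the comultiplication (single-cycle-to-two-cycle) side. So the correct statement of the $90\degree$ Lemma (now Lemma \ref{lem:anticommutativity}) becomes provable: a $90\degree$ rotation induces an isomorphism of the two face complexes that now carries anti-commutative to anti-commutative uniformly.

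Third, and this is where the real work lies, I would prove the \emph{well-definedness} half: the sign attached to each partial differential depends on choices (the marked point giving the global orientation, the global ordering $\{1,\dots,n\}$ of the cycles), and one must show the anti-commutativity of every face is independent of these choices. For the orientation choice this is the content of the remark after Figure \ref{fig:orientedproblemsquare} together with Lemma \ref{lem:cutlocicancel} (cut loci cancel in pairs): moving a marked point past a cut locus changes the bar-prefactor and the bar-postfactor of the adjacent incoming and outgoing differentials simultaneously, and these changes cancel in any composite. For the order choice, changing the global order by a permutation $\sigma$ multiplies each $\partial_\tau$ by $\mathrm{sgn}$ of the induced change, and again the incoming/outgoing contributions to a given face cancel. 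So the genuine argument is: (1) the per-face sign is well-defined up to global choices, (2) for one normalized choice per local model it equals $-1$ (i.e. the face anti-commutes), by the finite check, (3) hence it is $-1$ for all choices and all faces.

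The main obstacle is step (1)--(3) done \emph{simultaneously and consistently across the whole cube} rather than face-by-face: a single global ordering and a single system of marked points must make \emph{all} faces anti-commute at once, so one cannot independently normalize each face. The resolution is that the order/orientation data is genuinely global (one labelling per state, one marked point per cycle per state), the face signs are \emph{local} functions of it, and the two composites around any square differ only in the order in which two commuting local moves at disjoint-or-adjacent sites are applied --- so the consistency is automatic once each local model is verified, because the cube's faces are glued along edges where the differential (hence its sign) is literally the same map. I expect the write-up to spend most of its effort on the careful enumeration of local models (especially the contra-oriented cases that do not come from honest two-crossing oriented diagrams) and on the cut-loci-cancel bookkeeping; the Frobenius-algebra identity and the permutation-sign computation are then short.
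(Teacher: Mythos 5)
Your proposal matches the paper's proof strategy: reduce to the six essential atoms of Figure~\ref{fig:essentialatoms}, verify each directly using the bar operation, the Frobenius identity $(1\otimes m)\circ(\Delta\otimes 1)=\Delta\circ m$, and the local-to-global order permutation sign, and then extend to contra-oriented faces by repeated application of the $90\degree$ Lemma~\ref{lem:anticommutativity}. One small detail you have reversed: by the paper's labeling convention in Figures~\ref{fig:RotateMultPositive}--\ref{fig:RotateComultPositive} the order transposition under a $90\degree$ rotation contributes its sign on the domain of $m$ (so the rotation isomorphism sends $m\mapsto -m$ and $\Delta\mapsto -\Delta$), not on the codomain of $\Delta$ as you guess --- though either convention is internally consistent and yields the lemma, so this is a matter of which labeling Figure~\ref{fig:CanonicalOrientation} fixes, not a gap in the argument.
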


Before we prove Theorem \ref{thm:Anticommute} we revisit and prove the $90 \degree$ Lemma from the previous section.

\begin{lem}[$90 \degree$ Lemma]\label{lem:anticommutativity}

A $90 \degree$ rotation on the orientation of a crossing does not change the anti-commutativity of a face of a Khovanov complex. Furthermore every face of a Khovanov complex anti-commutes.
\end{lem}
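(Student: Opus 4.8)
The plan is to prove both assertions of the $90\degree$ Lemma together by a bookkeeping argument on signs, using the local-to-global comparison for orientations (the bar operation) and for orders (the sign of a permutation) introduced in this section. First I would set up the situation carefully: fix a face of the Khovanov cube, i.e.\ a choice of two sites $\tau_1, \tau_2$ of a state with the other sites frozen, and observe that after erasing the frozen part only finitely many local pictures can occur at $\tau_1, \tau_2$ — these are exactly the oriented versions of the two-crossing diagrams, and among those only the $6$ essential atoms of Figure~\ref{fig:essentialatoms} give rise to a potential failure of (anti)commutativity, the rest being killed by an $\eta$ along both paths. So it suffices to check the essential atoms, and for each essential atom it suffices to check one choice of local orientations at the two crossings, because any other choice is obtained by $90\degree$ rotations, which is precisely what the first sentence of the lemma lets us propagate.

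The heart of the argument is therefore the $90\degree$-rotation step, and here I would redo the ``Attempt of Proof'' from the previous subsection but now with the order-induced signs included. As in Figures~\ref{fig:RotateMultPositiveUnordered} and \ref{fig:RotateComultPositiveUnordered}, I construct an explicit isomorphism of complexes from the pre-rotated face to the rotated face: on each state it is the bar operation on the cycles whose local orientation flipped, composed with the permutation matching the new local order to the old global order. The key computation is what this isomorphism does to $m$ and to $\Delta$. For $m$, since $\overline{ab} = \overline{a}\,\overline{b}$ and the relevant permutation contribution is a transposition of the two merging cycles, one tracks a sign; for $\Delta$, using the identity established earlier in this section, namely $\Delta(\overline{a}) = \overline{a_1\otimes a_2} = -(\overline{a_1}\otimes\overline{a_2})$, the bar operation already contributes a $-1$, and the order bookkeeping contributes a compensating sign coming from the fact that co-multiplication inserts a new cycle into the ordered list. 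The point of adding the Grassmann/order structure is exactly that these two sources of sign now combine so that the isomorphism carries $m \mapsto m$ and $\Delta \mapsto \Delta$ \emph{uniformly}, rather than $\Delta \mapsto -\Delta$ as happened with orientations alone; hence a $90\degree$ rotation sends an anti-commuting face to an anti-commuting face.

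With the $90\degree$ invariance in hand, the second assertion follows: every face is, up to $90\degree$ rotations at its two crossings and the reduction to essential atoms, one of finitely many base cases, and for each base case one verifies anti-commutativity by a direct computation in $V = k[X]/(X^2)$ — exactly the kind of computation already carried out for the problem square, where $m\circ\Delta(1) = m(1\otimes\overline{X} + X\otimes\overline{1}) = -\overline{X} + \overline{X} = 0$ matches $\eta\circ\eta = 0$. I would also remark, as in the earlier remark, that the answer is independent of the (arbitrary) placement of the global marked points and of the global order, since changing either alters incoming and outgoing differentials in matched pairs whose contributions cancel; this is what makes ``anti-commutes'' a well-posed property of the face rather than of the chosen bases, and it is implicitly Lemma~\ref{lem:cutlocicancel}.

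I expect the main obstacle to be purely combinatorial rather than conceptual: correctly and consistently tracking the permutation signs when a cycle is \emph{created} (co-multiplication) or \emph{destroyed} (multiplication), since the ordered list of cycles changes length across a differential, and making sure the convention for where the new cycle is inserted into the local order (first position, per Figure~\ref{fig:CanonicalOrientation}) is compatible across the two ways of traversing the face. Getting this sign accounting exactly right — so that the bar-induced signs and the order-induced signs always sum to give anti-commutativity in all $6$ essential-atom cases and both the merge and split local pictures — is the delicate part; everything else is a finite check.
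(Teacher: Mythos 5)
Your overall plan matches the paper's: reduce to the six essential atoms and propagate by $90\degree$ rotations, constructing an explicit isomorphism from the pre-rotated face to the rotated face built out of the bar operation on each cycle whose local orientation flips, multiplied by the permutation sign matching local to global order. The key insight you emphasize---that the point of the Grassmann/order structure is to make the isomorphism treat $m$ and $\Delta$ \emph{uniformly}---is exactly right and is exactly what the paper's proof achieves. But your sign accounting is wrong, and in a way that is internally inconsistent with your own intermediate reasoning.

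You claim the isomorphism carries $m \mapsto m$ and $\Delta \mapsto \Delta$. The paper's computation (and a careful reading of your own words) gives $m \mapsto -m$ and $\Delta \mapsto -\Delta$. For $m$: as you correctly note, $\overline{ab} = \overline{a}\,\overline{b}$ so the bar contributes no sign, while the order change is a transposition of the two merging cycles contributing $-1$; this gives $m \mapsto -m$, not $m \mapsto m$. For $\Delta$: you claim the order bookkeeping contributes a ``compensating'' $-1$ because a new cycle is inserted, but the paper explicitly observes that the local orders on the split side induce \emph{no} sign change (the new cycle goes into the first local slot and the relative order of the rest is unchanged); only the bar identity $\Delta(\overline{a}) = -(\overline{a_1}\otimes\overline{a_2})$ contributes, giving $\Delta \mapsto -\Delta$. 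So the uniform sign is $-1$, not $+1$. Fortunately the conclusion survives, since a face composes two differentials and $(-1)^2 = 1$, so anti-commutativity is preserved regardless of whether the uniform sign is $+1$ or $-1$---but as written your step would fail if someone actually tried to verify the claimed $m \mapsto m$, and the erroneous appeal to an order-induced sign in the $\Delta$ case would lead a reader astray.
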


\begin{proof}

As before in Figure \ref{fig:RotateMultPositive} the top row corresponds to the pre-rotated complex and the lower row corresponds to the complex obtained by a $90 \degree$ rotation. We construct an isomorphism between the complex corresponding to the top row and the complex corresponding to the bottom row. Assume the global orientations and orders in both rows agree with the local orientations and orders in the pre-rotated complex so that the maps $i_1$ and $i_2$ are induced by the change in local orientations and orders.  Finally let $a$ and $b$ denote arbitrary algebra elements.

\begin{figure}[H]
\centering
    \includegraphics[width=.7\textwidth]{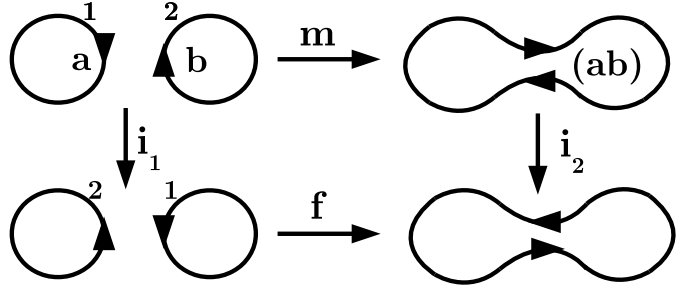}
    \caption{$90^\circ$ rotation on multiplication}
\label{fig:RotateMultPositive}
\end{figure}

Then we define $i_1(a \otimes b) = -(\overline{a} \otimes \overline{b})$ and $i_2 (ab) = \overline{ab}$, where the negative sign in $i_1$ appears due to the change in order. To determine the map denoted by $f$ (i.e. the image under the isomorphism of $m$)  we note that $-m(\overline{a} \otimes \overline{b}) = m(-(\overline{a} \otimes \overline{b})) = -(\overline{a}\overline{b}) = -\overline{ab}$ for all $a$ and $b$. Thus the isomorphism sends $m$ in the upper complex to $-m$ in the lower complex.

Similarly, consider the action of a $90\degree$ rotation on co-multiplication at a positive crossing as in Figure \ref{fig:RotateComultPositive}.

\begin{figure}[H]
\centering
    \includegraphics[width=.7\textwidth]{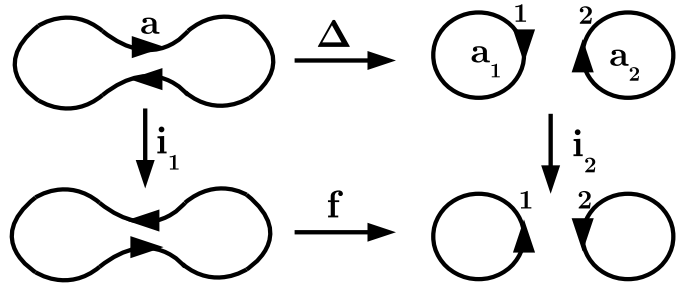}
    \caption{$90^\circ$ rotation on co-multiplication}
\label{fig:RotateComultPositive}
\end{figure}

We note that the local orders on the right hand side do not induce any sign changes and thus the algebra from the previous attempt remains unchanged and the isomorphism sends $\Delta$ in the upper complex to $-\Delta$ in the lower complex.

The cases for the negative crossing can be handled in a similar fashion. (Or one can reverse the horizontal arrows in the above proofs for positive crossings and consider $90 \degree$ counter-clockwise rotations on negative crossings.)
\end{proof}

\begin{figure}[h!]
\centering \def\svgwidth{3.00 in}
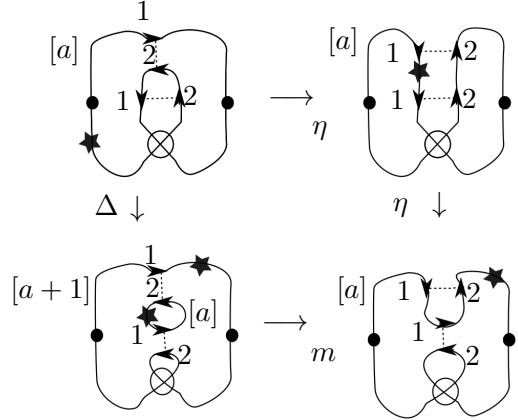
    \caption{Oriented, ordered Khovanov complex for the two-crossing unknot}
\label{fig:orientedorderedproblemsquare}
\end{figure}

\begin{proof}[Proof of Theorem \ref{thm:Anticommute}]

First consider the oriented versions of the diagrams in Figure \ref{fig:essentialatoms}. It is an exercise to show that all of the corresponding Khovanov complexes anti-commute.  Note that the oriented, ordered diagram for the 2 crossing unknot is given in Figure \ref{fig:orientedorderedproblemsquare}. In this figure we use brackets to denote the global order so as to distinguish it from the local orders used in previous diagrams. If the global order in the lower left state is $[a] \otimes [a+1]$ we see the only sign change occurs as a pre-composition in the multiplication map and hence,
$$m \circ \Delta(1) = m(-(1 \otimes \overline{X} + X \otimes \overline{1})) = X - X = 0.$$

Finally by applying multiple applications of the $90 \degree$ Lemma we see the contra-oriented faces must also anti-commute.
\end{proof}

Finally one must show that this construction is invariant under an oriented set of Reidemeister moves. While care must be taken due to the additional maps produced by the source-sink orientation and ordering of cycles, the general details are identical to the classical proofs. We refer the reader to \cite{DrorCat} \cite{DrorCob} \cite{ArbitraryCoeffs}.

\begin{rem}
As one might expect there are issues of sign to consider when one examines the functoriality of this construction.
To define the Rasmussen invariant one only needs functoriality up to multiplication by an element of $\mathbb{Q}$. For both this reason and to simplify the exposition we choose not to move in that direction. However, one can get precise functoriality by applying constructions analogous to \cite{CMW} or \cite{CaprauWebsFoams} on top of the previous structure.
\end{rem}

\subsection{Non-classicality of Virtual Knots with Unit Jones Polynomial}

There is a construction that produces infinitely many non-trivial virtual knot diagrams that have unit Jones polynomial. We shall prove here, that all of these examples of knots with unit Jones polynomial are non-classical. This settles a question raised in \cite{VKT} and \cite{VKTI}.  To clarify our proof we first restate a theorem of Manturov \cite{ArbitraryCoeffs} regarding the invariance of Khovanov homology for virtual knots with arbitrary coefficients under Z-equivalence.

\begin{thm} Let $K$, $K'$ be two knot diagrams obtained one from another by Z-equivalence as depicted in Figure \ref{fig:ZEquiv}. Then there is a grading-preserving chain isomorphism $C(K) \rightarrow C(K')$ that agrees with the local differentials. In particular, if $C(K)$ is a well-defined complex, then so is $C(K')$ and their homology groups are isomorphic.
\end{thm}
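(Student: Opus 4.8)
The plan is to exhibit the chain isomorphism directly, exploiting the fact that a $Z$-equivalence move, as in Figure~\ref{fig:ZEquiv}, is a purely local modification that neither creates nor destroys classical crossings and does not alter the flat connectivity of the diagram outside a small disc. First I would observe that such a move induces a canonical bijection between the classical crossings of $K$ and of $K'$, and hence between the two cubes of resolutions. For each vertex of the cube (a choice of $A$- or $B$-smoothing at every classical crossing) one checks by inspection of the local picture in Figure~\ref{fig:ZEquiv} that the resulting collections of planar loops on the two sides have the same number of components and the same pattern of which loops pass through which smoothing sites and how they merge or split there; virtual crossings are irrelevant both to the loop count and to the merge/split combinatorics, and the detour move lets one slide them out of the way. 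Consequently the enhanced-state modules $C^{i,j}(K)$ and $C^{i,j}(K')$ are canonically identified as bigraded modules, and the identification preserves both the homological grading $i$ and the quantum grading $j$.

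Next I would compare the differentials under this identification. Each partial differential is one of $m$, $\Delta$, or $\eta$, pre- and post-composed with a bar operation for every cut locus crossed and with the sign of the permutation reconciling the local and global orders, as set up in Section~\ref{subsec:LGOrder}. The ``bare'' operation $m/\Delta/\eta$ is determined only by the merge/split pattern at the site, which the previous step showed is preserved; what can differ between $K$ and $K'$ is the placement and number of cut loci along the affected semi-arcs (the canonical source-sink orientations may disagree in different places after the move) and the local orders at the affected sites. The key claim is that these discrepancies are ``exact'': for each enhanced state $s$ one defines a diagonal gauge isomorphism that applies $\overline{(\ \cdot\ )}$ to exactly those loops of $s$ whose net cut-locus count relative to the chosen marked point changes under the move, multiplied by the sign of the permutation relating the two local-order conventions. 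Using the identities already recorded in the excerpt ($\overline{ab}=\overline{a}\,\overline{b}$, $\Delta(\overline{a})=-(\overline{a_1}\otimes\overline{a_2})$, and $\eta\equiv 0$), one checks that conjugating the differential of $C(K)$ by this gauge isomorphism yields precisely the differential of $C(K')$. That the gauge isomorphism is well defined --- independent of the auxiliary choices of marked points and global orders, and of the path used to count cut loci --- is exactly the content of Lemma~\ref{lem:cutlocicancel}, which is what makes the cut-locus bookkeeping consistent.

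Finally I would assemble the pieces: the composite of the canonical bijection of chain modules with this gauge isomorphism is a grading-preserving chain map $C(K)\to C(K')$ that is an isomorphism in each bidegree and intertwines all local (partial) differentials, so it is the desired chain isomorphism; it induces isomorphisms $Kh^{i,j}(K)\cong Kh^{i,j}(K')$, and since it is an isomorphism of complexes, $\partial^2=0$ holds for $C(K)$ if and only if it holds for $C(K')$, which gives the well-definedness transfer. The main obstacle is the middle step: verifying that the change of cut loci and of local orders produced by the $Z$-move is compensated \emph{exactly} by a diagonal gauge transformation, and that this transformation is consistently defined; this is the technical heart of the statement and is precisely the kind of point deferred to the appendix. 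An alternative route, requiring less hands-on bookkeeping, is to pass to Manturov's Grassmann-algebra reformulation and invoke the corresponding computation in \cite{ArbitraryCoeffs}.
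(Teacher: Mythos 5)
The paper supplies no proof of this theorem at all: the ``proof'' is a one-line pointer to Lemma~1 of \cite{ArbitraryCoeffs}, where the explicit chain isomorphism is constructed. Your proposal is therefore an attempt to reconstruct the argument that the paper merely cites, and the fallback you name at the end---pass to Manturov's Grassmann-algebra reformulation and invoke his computation---is precisely what the paper itself does.

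Your outline is in the right spirit and correctly identifies the moving parts: the canonical bijection of crossings (and hence of cubes of resolutions) with matching $A/B$ labels, which is the state-by-state matching that makes the bracket invariant under virtualization and shows the bigraded modules coincide; the fact that the switched crossing's canonical source-sink orientation flips, so the only change is in the cut-locus pattern on the four semi-arcs meeting that crossing together with the local order at that site; and the idea that a state-by-state diagonal gauge built from the bar operation and a permutation sign should intertwine the two differentials. But the crux is exactly what you concede you have not done: verifying that this gauge conjugates $\partial_{K}$ into $\partial_{K'}$ in \emph{every} resolution, across the several merge/split patterns of loops through the affected semi-arcs (and across the cases where those semi-arcs lie on one loop or on several). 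That case analysis is the entire content of Manturov's Lemma~1; asserting that the discrepancy is ``exact'' is restating the theorem, not proving it. One small factual slip: you say this verification is ``precisely the kind of point deferred to the appendix,'' but the appendix of this paper treats a different issue---independence of the global order under spanning-tree propagation---not Z-equivalence.
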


\begin{proof} The proof is an explicit construction of the isomorphism. We refer the reader to Lemma 1 of Manturov \cite{ArbitraryCoeffs}.
\end{proof}

\begin{figure}
     \begin{center}
     \begin{tabular}{c}
     \includegraphics[width=8cm]{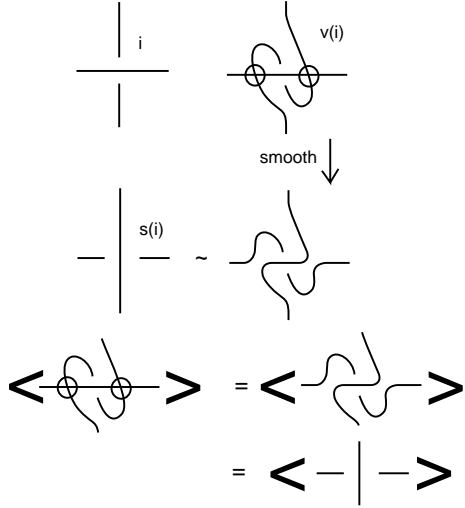}
     \end{tabular}
     \caption{\bf Switch and Virtualize}
     \label{f7}
\end{center}
\end{figure}

\begin{figure}[h!]
\centering
    \includegraphics[height=.6in]{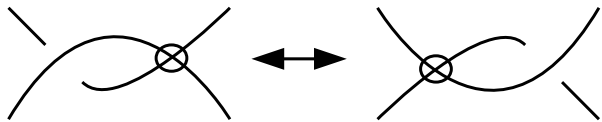}
    \caption{Z-equivalence}
\label{fig:ZEquiv}
\end{figure}

We say that a classical crossing has been {\it virtualized} if it is replaced by a crossing with the opposite orientation that is flanked by two virtual crossings. See the illustrations for this construction in \cite{VKT} and \cite{VKTI} and see Figure~\ref{f7} in this paper.  
In the Gauss diagram, {\it a crossing is virtualized by reversing its sign and leaving its arrow unchanged.} The reader should note this very specific convention for the term virtualization.  One checks, as in Figure~\ref{f7},  that the Jones polynomial (via the bracket model \cite{KaB}) of a knot with a virtualized crossing is the same as the Jones polynomial of that same knot with the crossing {\it switched} (a switch interchanges over and under-crossing lines at the site of the crossing.). Thus, given a classical knot diagram $K,$ one can choose a subset $S$ of the crossings so that switching  all of them gives an unknot diagram $U = S(K)$ where $S(K)$ denotes the diagram that results from the diagram $K$ by switching all the crossings in the subset $S.$ Instead of switching, we virtualize all the crossings in $S$ to form a virtual knot diagram $Virt(K).$ It then follows that $Virt(K)$ has unit Jones polynomial, and is a non-trivial knot due to the fact that its un-oriented Gauss code has not been changed (See \cite{VKT} for the proof of non-triviality).

\begin{thm}If $K$ is a non-trivial classical knot diagram and $Virt(K)$ is the virtual diagram described above with unit Jones polynomial, then $Virt(K)$ is a non-trivial and non-classical virtual diagram.
\end{thm}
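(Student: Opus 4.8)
The plan is to combine the construction of $Virt(K)$ with the theorem of Kronheimer and Mrowka that Khovanov homology detects the classical unknot, using the $\mathbb{Z}$-equivalence invariance just stated to compute the Khovanov homology of $Virt(K)$. First I would observe that $Virt(K)$ is non-trivial: this is established in \cite{VKT} by noting that the underlying (un-oriented) Gauss code of $Virt(K)$ is identical to that of $K$, so if $Virt(K)$ were the unknot then $K$ would have the Gauss code of the unknot, forcing $K$ to be trivial, contrary to hypothesis. So the real content is non-classicality.

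The key step is this: the operation of virtualizing a crossing is an instance of $\mathbb{Z}$-equivalence in the sense of Figure \ref{fig:ZEquiv}. Indeed, comparing virtualization (reverse the sign of a crossing, leave its chord/arrow unchanged in the Gauss diagram, which diagrammatically means replacing the crossing by an oppositely-oriented crossing flanked by two virtual crossings) with the $\mathbb{Z}$-equivalence move, one sees they coincide locally. Therefore, by Theorem (Manturov's $\mathbb{Z}$-equivalence invariance restated above), there is a grading-preserving chain isomorphism $C(Virt(K)) \to C(S(K))$, where $S(K)$ is the diagram obtained from $K$ by \emph{switching} (not virtualizing) exactly the crossings in $S$. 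By the choice of $S$, the diagram $S(K) = U$ is an \emph{unknotted classical diagram}. Hence the Khovanov homology of $Virt(K)$ is isomorphic, as a bigraded group, to the Khovanov homology of the classical unknot $U$, which is the trivial Khovanov homology (two generators in the appropriate bidegrees). Now suppose, for contradiction, that $Virt(K)$ were classical, i.e. equivalent to some classical knot diagram $K''$. Then the generalized Khovanov homology of $Virt(K)$ agrees with the classical Khovanov homology of $K''$ (our construction reduces to the standard one on classical diagrams, since classical diagrams produce no cut loci and no $\eta$ maps, as noted earlier). Thus $K''$ is a classical knot with trivial Khovanov homology, so by Kronheimer--Mrowka $K''$ is the unknot, i.e. $Virt(K)$ is the trivial knot --- contradicting non-triviality. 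Therefore $Virt(K)$ is non-classical.

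I would present the argument in that order: (1) non-triviality via the Gauss code; (2) identify virtualization with $\mathbb{Z}$-equivalence and invoke the restated Manturov theorem to get $Kh(Virt(K)) \cong Kh(S(K)) = Kh(\text{unknot})$; (3) note that our Khovanov homology for virtual knots restricts to the usual one on classical diagrams; (4) apply Kronheimer--Mrowka to derive a contradiction from the assumption of classicality. The main obstacle --- really the crux that must be handled carefully --- is step (2): one must check honestly that the local replacement defining virtualization is exactly the local picture of the $\mathbb{Z}$-equivalence move of Figure \ref{fig:ZEquiv}, so that the hypotheses of the restated theorem genuinely apply, and one must confirm that applying $\mathbb{Z}$-equivalence at all crossings of $S$ simultaneously (rather than one at a time) still yields a chain isomorphism, which follows by composing the isomorphisms crossing-by-crossing. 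A secondary point requiring a remark is that $C(Virt(K))$ is in fact a well-defined complex, which is exactly what Theorem \ref{thm:Anticommute} guarantees, so the homology groups being compared are legitimate.

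One should also remark that the conclusion can be strengthened slightly: since $S(K)$ is a \emph{classical} unknot diagram, the chain isomorphism shows $Kh(Virt(K))$ has total rank $2$, which already rules out classicality for any $K''$ whose classical Khovanov homology has rank greater than $2$; but to cover the remaining case one genuinely needs the Kronheimer--Mrowka unknot-detection result, since a priori a non-trivial classical knot could have rank-$2$ Khovanov homology were it not for their theorem.
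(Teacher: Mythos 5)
Your proposal is correct and follows essentially the same route as the paper's proof: non-triviality of $Virt(K)$ via the unchanged Gauss code, the identification of $Kh(Virt(K))$ with $Kh(S(K)) = Kh(\text{unknot})$ via Manturov's $\mathbb{Z}$-equivalence theorem, and the appeal to Kronheimer--Mrowka's unknot-detection result to rule out classicality. You flesh out a few points the paper leaves implicit (the reduction of the generalized theory to classical Khovanov homology on classical diagrams, and the rank-$2$ remark), but the logical skeleton and the key citations are the same.
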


\begin{proof}The proof follows a suggestion of Adam Lowrance to examine the virtual Khovanov homology of $Virt(K)$. We know that $Virt(K)$ is a non-trivial knot. We also know, using the description of virtual integral Khovanov homology given above, that the Khovanov homology of $Virt(K)$ is isomorphic to the Khovanov homology of $S(K) = U$ where $S(K)$ is the classical unknot obtained by switching crossings in $K$ from the subset $S$ of crossings  used to virtualize $K$ to $Virt(K).$ That is, Khovanov homology of a knot with a virtualized crossing is the same as the Khovanov homology of the corresponding knot with that crossing switched. {\it Thus $Virt(K)$ has the Khovanov homology of the unknot.} If $Virt(K)$ were equivalent to a classical knot $K'$, then $K'$ would have the Khovanov homology of the unknot. Since by Kronheimer and Mrowka \cite{UnknotDetector}, Khovanov homology detects the classical unknot, it would follow that $K'$  and hence $Virt(K)$, is unknotted. Since we know that $Virt(K)$ is not trivial, we know that $Virt(K)$ is not classical.
\end{proof}

This argument shows that all examples of virtual diagrams with unit Jones polynomial,  obtained by the virtualization construction are non - classical. There are diagrams to which the argument in the Theorem does not apply. These are virtual diagrams with unit Jones polynomial that are not $Z$-equivalent to the unknot. See \cite{VKTI} for such an example and for references to previous constructions of this type due to V. O. Manturov.   Previous attempts to prove such results involve using other invariants of virtual knots and have gone on a case-by-case basis. In \cite{DKMinimalSurface} \cite{SilverWilliamsUnitJones} it was previously shown that $Virt(K)$ is non-classical for classical knots $K$ of unknotting number one. Knowing that all the virtualization examples are definitely non-trivial and non-classical provides a big challenge for  the search for combinatorial proofs of these properties.

\section{Lee's Homology for Virtual Knots}

\subsection{Khovanov's Universal Algebra}\label{subsec:UniversalAlg}

Before we consider calculations of Lee's homology we first investigate the universal Frobenius system for virtual knots. A similar investigation was performed by Manturov \cite{ManturovVirtualKnots}.

Khovanov \cite{KhovanovFrobenius} showed that for classical knots the Frobenius system he labels as $\mathcal{F}_5$ is universal in that all other rank two Frobenius systems can be obtained from it via compositions of base changes and twists. Let us first recall the definition of $\mathcal{F}_5$ and then apply it to the complex in Figure \ref{fig:orientedorderedproblemsquare}.

Following Khovanov we have $\mathcal{F}_5$ given by:
\begin{align*}
\mathcal{R}_5 & = \mathbb{Z}[h, t], \\
\mathcal{A}_5 & = \mathcal{R}_5[X]/(X^2 - hX - t),\\
deg(h) & = 2,\\
deg(t) & = 4
\end{align*}
where

\begin{align*}
 \epsilon(1) & = 0,\\
 \Delta(1) & = 1\otimes X + X \otimes 1 - h 1\otimes 1,\\
 \epsilon(X) & = 1, \\
 \Delta(X) & = X\otimes X + t 1\otimes 1\\
 \iota(1)& =1
\end{align*}
and set

\[ \eta(X) = 0 = \eta(1). \]

Applying this Frobenius system to the complex in Figure \ref{fig:orientedorderedproblemsquare} we get
\begin{align*}
m \circ \Delta(1) & = m (-(1\otimes \overline{X} + X \otimes \overline{1} - h 1 \otimes \overline{1})) \\
 & = m (1\otimes X - X \otimes 1 + h 1\otimes 1) = X - X + h = h
\end{align*}
and
\begin{align*}
m \circ \Delta(X) & = m(-(X\otimes \overline{X} + t 1 \otimes \overline{1})) \\
& = m(X\otimes X - t 1\otimes 1) = hX + t - t = hX.
\end{align*}

Hence we must set $h=0$ in order to have a well-defined complex. And so the universal Frobenius system for virtual knots corresponds to the system introduced by Bar-Natan \cite{DrorCob} and labeled by Khovanov \cite{KhovanovFrobenius} as $\mathcal{F}_3$. For completeness we restate the definition of $\mathcal{F}_3$  given by Khovanov as:
\begin{align*}
\mathcal{R}_3 & = \mathbb{Z}[t],\\
\mathcal{A}_3 & = \mathcal{R}_3[X]/(X^2 - t)\\
deg(t) & = 4
\end{align*}

where

\begin{align*}
 \epsilon(1) & = 0, \\
 \Delta(1) & = 1\otimes X + X \otimes 1,  \\
 \epsilon(X) & = 1, \\
 \Delta(X) & = X\otimes X + t 1\otimes 1\\
 \iota(1)& =1
\end{align*}

and set

\[ \eta(X) = 0 = \eta(1). \]

We shall call $\mathcal{F}_3$ the universal rank 2 Frobenius system for Khovanov homology for virtual knots. Note that $\mathcal{F}_3$ is a slight generalization of the (co)homology theory defined by Lee \cite{Endomorphism} which can be arrived at by setting $t=1$ in $\mathcal{F}_3$. Furthermore, note that $\mathcal{F}_3$ is no longer a bi-graded homology theory as the differentials $m$ and $\Delta$ are not homogeneous with respect to the quantum grading. Never the less, one may apply a filtration to the former quantum gradings to arrive at a filtered theory where the quantum degree can be reinterpreted as a filtered degree. More formally, we say a (finite length) \emph{filtration} of a complex $C$ is a sequence of subcomplexes \[0 = C_n \subset C_{n - 1} \subset  \cdots \subset C_m = C.\] One can associate a grading on the filtration by saying $x \in C$ has grading $i$ whenever $x \in C_i$ but $x \notin C_{i - 1}$. Given a map $f : C \rightarrow C'$ between two filtered chain complexes we say $f$ respects the filtration if $f(C_i) \subset C'_{i}$ and $f$ is a \emph{filtered map of degree k} if $f(C_i) \subset C'_{i+k}$. A filtration ${C_i}$ on $C$ induces a filtration ${S_i}$ on $H^{*}(C)$  where a class $[x] \in H^{*}(C)$ is in $S_i$ if and only if it has a representative which is an element of $C_i$.

Rasmussen showed that when $m$ and $\Delta$ are viewed as maps between chain complex, there exists the following short exact sequence (\cite{Rasmussen} Lemma 3.8)
\[0 \rightarrow \widehat{Kh}(K_1 \sharp K_2) \xrightarrow{\Delta}  \widehat{Kh}(K_1) \otimes \widehat{Kh}(K_2) \xrightarrow{m} \widehat{Kh}(K_1 \sharp K_2) \rightarrow 0 ,\] where $\widehat{Kh}$ represents Khovanov-Lee homology. Using this short exact sequence, one can directly compute the induced filtered degrees of $m$ and $\Delta$ as chain maps. In particular, Rasmussen notes that when $K_1$ and $K_2$ are both unknots it is a quick exercise to show that the induced filtered degree of $m$ and $\Delta$ as chain maps are $-1$. Computing directly from $\mathcal{F}_3$, one get that the filtered degrees for $\epsilon$ and $\iota$ as chain maps are 1.

Before we can say more about the Rasmussen invariant, we must return to calculating the Khovanov-Lee homology. To do so we follow the construction of Bar-Natan and Morrison \cite{BarNatanMorrisonKaroubi} using the Karoubi envelope of $\mathcal{F}_3$ to guarantee projections (i.e. maps $p$ in the category satisfying $p^2=p$) exist. We then use the projections locally at a saddle morphism to create a surface algebra which is non-zero on the canonical generators of Lee's homology. As in Bar-Natan and Morrison, we see that the alternately colored smoothings are in one-to-one correspondence with orientations of the original diagram. Since, unlike classical knots, our diagrams are not planar we must first describe the surface on which we will apply this algebra.

\subsection{Checkerboard Shading on Abstract Link Diagrams}\label{sec:CheckerboardALD}

Given a virtual knot diagram $D$, Kamada and Kamada \cite{KamadaALD} have shown how to construct an abstract link diagram (ALD), a (thickened) surface with boundary, which contains $D$. The process for creating an ALD given a knot diagram is summarized in Figures \ref{fig:ASDCrossing} and \ref{fig:ASDVirtual}. One can then obtain an embedding of a knot in a (thickened) orientable surface by placing (thickened) discs along the boundary curves of the ALD. A classical knot has a diagram with associated ALD that results in an embedding of the knot diagram in $S^2$. Kamada and Kamada \cite{KamadaALD} have also defined equivalence relations on ALDs which yield equivalence classes called abstract links and furthermore shown that there is a bijection between abstract links and virtual links.

\begin{figure}[H]
\centering
    \includegraphics[width=.6\textwidth]{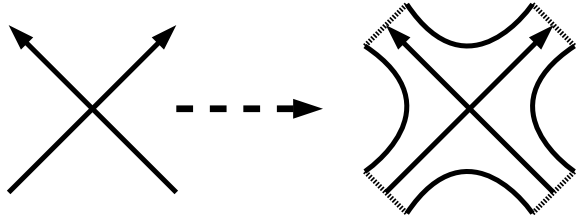}
    \caption{ALD for a classical crossing}
\label{fig:ASDCrossing}
\end{figure}

\begin{figure}[H]
\centering
    \includegraphics[width=.6\textwidth]{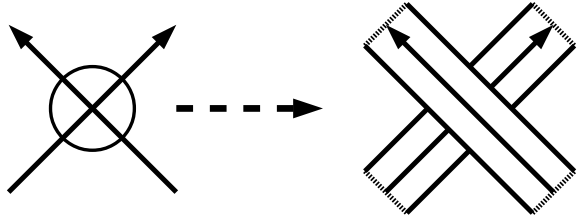}
    \caption{ALD for a virtual crossing}
\label{fig:ASDVirtual}
\end{figure}

Here we extend the definition of ALDs to a map sending locally oriented virtual knot diagrams with cut loci to (possibly non-orientable) surfaces by placing Mobius bands at the cut loci. We denote the location of the Mobius band by cross cuts on the abstract link diagram as shown in Figure \ref{fig:ALDCutLoci}. This construction is similar to the twisted link theory constructed by Bourgoin \cite{BourgoinTwistedLinks}. Manturov has constructed a version of Khovanov homology which extends to twisted links \cite{ManturovAddlGradings}. We believe our construction for Lee's theory and the Rasmussen invariant should also extend naturally to this situation and plan to return to this case in a future paper.

\begin{figure}[H]
\centering
    \includegraphics[width=.4\textwidth]{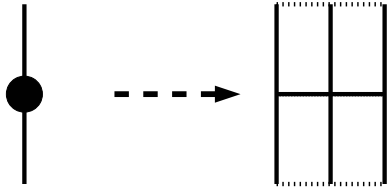}
    \caption{Cross cut ALD at a cut locus}
\label{fig:ALDCutLoci}
\end{figure}

Using the source-sink orientations of the virtual knot the ALD with cross cuts can be checkerboard shaded using the right-hand (or left-hand) rule as shown in Figures \ref{fig:ASDCheckerboardSourceSink} and \ref{fig:ASDCheckerboardCutLoci}.

\begin{figure}[H]
\centering
    \includegraphics[width=.6\textwidth]{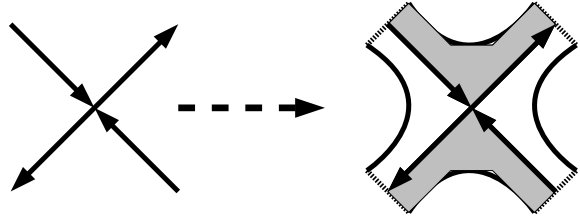}
    \caption{ALD checkerboard shading at a locally oriented crossing}
\label{fig:ASDCheckerboardSourceSink}
\end{figure}

\begin{figure}[H]
\centering
    \includegraphics[width=.4\textwidth]{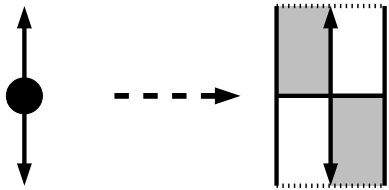}
    \caption{ALD checkerboard shading at a cross cut}
\label{fig:ASDCheckerboardCutLoci}
\end{figure}

\subsection{Lee's Homology and Alternately Colored Smoothings}\label{sec:LeeALDColoring}

For the remainder of the paper we specialize $\mathcal{F}_3$ by setting $t=1$ and hence $X^2 = 1$ as in Bar-Natan and Morrison \cite{BarNatanMorrisonKaroubi}. We denote the resulting homology theory, Khovanov-Lee theory, by $\widehat{Kh}$ and chain complexes by $\widehat{\mathcal{C}}$. Note that this theory is no longer bi-graded theory as the maps are no longer homogeneous with respect to the quantum grading. However a filtration exists based on the quantum grading. At this point one could pass to spectral sequences as was first noted by Rasmussen \cite{Rasmussen}.  Instead we take an alternate approach and only return to the filtration after computing the Khovanov-Lee homology.

Let $Cat(\widehat{\mathcal{C}})$ be the category of chain complexes up to chain homotopies. Recall from Bar-Natan and Morrison that we may pass from the homotopy category of complexes $Kom(Cat(\widehat{\mathcal{C}}))$  to the homotopy category of complexes over the Karoubi envelope $Kom(Kar(Cat(\widehat{\mathcal{C}})))$ without changing any equivalencies of complexes (\cite{BarNatanMorrisonKaroubi} Proposition 3.3).  The Karoubi envelope has the advantage that every projection has an image and furthermore that, given a projection, the objects of the original category decompose into a direct sum. More specifically, given a projection $p: \mathcal{O} \rightarrow \mathcal{O}$ in an additive category then $(1-p)$ is also a projection and $\mathcal{O} \cong Im (p) \oplus Im (1-p)$(\cite{BarNatanMorrisonKaroubi} Proposition 3.2).

Following the conventions of Bar-Natan and Morrison we set
\[``red" = \mathbf{r} =  \frac{1+X}{2}\]
and
\[``green" = \mathbf{g} =  \frac{1-X}{2}.\]
Then
\[(1-\mathbf{r}) = 1 -  \frac{1+X}{2} = \frac{1-X}{2} = \mathbf{g}\]
 and so $\mathcal{O} \cong \mathbf{r} \oplus \mathbf{g}$. Diagrammatically  this can be depicted by coloring the arcs of a state as in Figure \ref{fig:singleprojdecomp}.

\begin{figure}[H]
\centering
    \includegraphics[width=.3\textwidth]{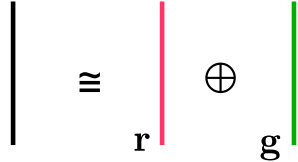}
    \caption{Red-green decomposition of a single strand}
\label{fig:singleprojdecomp}
\end{figure}

\begin{lem}[1. - 4. from \cite{BarNatanMorrisonKaroubi} Lemma 4.1]\label{lem:redgreenproperties}
If one defines $\mathbf{r}$ and $\mathbf{g}$ as above then:
\begin{enumerate}
  \item $\mathbf{r}$ and $\mathbf{g}$ are projections: $\mathbf{r}^2 = \mathbf{r}$ and $\mathbf{g}^2 = \mathbf{g}$
  \item $\mathbf{r}$ and $\mathbf{g}$ are complementary: $\mathbf{r} + \mathbf{g} = 1$
  \item $\mathbf{r}$ and $\mathbf{g}$ are disjoint: $\mathbf{r} \cdot \mathbf{g} = 0$
  \item $\mathbf{r}$ and $\mathbf{g}$ are eigenprojections of $X$: $X \cdot \mathbf{r} = \mathbf{r}$ and $X \cdot \mathbf{g} = -\mathbf{g}$
  \item $\mathbf{r}$ and $\mathbf{g}$ are conjugates: $\overline{\mathbf{r}} = \mathbf{g}$ and $\overline{\mathbf{g}} = \mathbf{r}$
  \item $\Delta(\mathbf{r}) = 2\mathbf{r \otimes r}$ and $\Delta(\mathbf{g}) = 2\mathbf{g \otimes g}$.
\end{enumerate}
\end{lem}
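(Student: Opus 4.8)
The plan is to verify each of the six items by direct computation in the algebra $\mathcal{A} = k[X]/(X^2-1)$ with $k = \mathbb{Q}$ (or more precisely after setting $t=1$ in $\mathcal{F}_3$), using the explicit formulas $\mathbf{r} = (1+X)/2$ and $\mathbf{g} = (1-X)/2$ together with the structure maps $m$, $\Delta$, and the bar operation. Since items 1--4 are quoted from \cite{BarNatanMorrisonKaroubi} Lemma 4.1, I would simply reproduce the one-line checks: for (1), $\mathbf{r}^2 = (1+X)^2/4 = (1 + 2X + X^2)/4 = (2 + 2X)/4 = \mathbf{r}$ using $X^2 = 1$, and symmetrically for $\mathbf{g}$; for (2), $\mathbf{r} + \mathbf{g} = (1+X)/2 + (1-X)/2 = 1$; for (3), $\mathbf{r}\cdot\mathbf{g} = (1+X)(1-X)/4 = (1 - X^2)/4 = 0$; for (4), $X\cdot\mathbf{r} = X(1+X)/2 = (X + X^2)/2 = (X+1)/2 = \mathbf{r}$ and $X\cdot\mathbf{g} = X(1-X)/2 = (X - 1)/2 = -\mathbf{g}$.

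For the two new items (5) and (6), which are the genuinely new content of this lemma, I would use the definitions established earlier in the paper. For (5), recall that the bar operation is defined by $\overline{1} = 1$ and $\overline{X} = -X$, extended linearly; hence $\overline{\mathbf{r}} = \overline{(1+X)/2} = (\overline{1} + \overline{X})/2 = (1 - X)/2 = \mathbf{g}$, and symmetrically $\overline{\mathbf{g}} = (1 + X)/2 = \mathbf{r}$. For (6), I would use the comultiplication formulas for $\mathcal{F}_3$ with $t = 1$, namely $\Delta(1) = 1\otimes X + X\otimes 1$ and $\Delta(X) = X\otimes X + 1\otimes 1$, together with linearity of $\Delta$: compute
\[
\Delta(\mathbf{r}) = \tfrac{1}{2}\bigl(\Delta(1) + \Delta(X)\bigr) = \tfrac{1}{2}\bigl(1\otimes X + X\otimes 1 + X\otimes X + 1\otimes 1\bigr),
\]
and then observe that $2\,\mathbf{r}\otimes\mathbf{r} = 2\cdot\tfrac{1+X}{2}\otimes\tfrac{1+X}{2} = \tfrac{1}{2}(1+X)\otimes(1+X) = \tfrac{1}{2}(1\otimes 1 + 1\otimes X + X\otimes 1 + X\otimes X)$, which matches; the computation for $\mathbf{g}$ is identical with signs, using $\Delta(\mathbf{g}) = \tfrac{1}{2}(\Delta(1) - \Delta(X))$ and $2\,\mathbf{g}\otimes\mathbf{g} = \tfrac{1}{2}(1\otimes 1 - 1\otimes X - X\otimes 1 + X\otimes X)$.

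I do not anticipate any genuine obstacle here: every item reduces to a short polynomial identity modulo $X^2 - 1$ plus linearity of the structure maps, and the only point requiring care is bookkeeping the factor of $2$ in item (6) (it comes precisely from $\mathbf{r},\mathbf{g}$ being defined with a $1/2$ while $\Delta$ is bilinear, so two factors of $1/2$ on the right must be balanced by the coefficient $2$). The mild subtlety worth a sentence of comment is that the bar operation in item (5) is only an algebra anti-involution in a restricted sense (it is not an algebra isomorphism, as emphasized earlier), but since we only need its values on the basis elements $1$ and $X$, linearity suffices and no structural property of the bar map is invoked. Thus the proof is essentially a transcription of the definitions, with items 1--4 cited and items 5--6 checked explicitly.
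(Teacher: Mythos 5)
The paper does not give a proof of this lemma: items 1--4 are cited from \cite{BarNatanMorrisonKaroubi} Lemma~4.1, and items 5--6 are stated without any argument and then used immediately. Your strategy of direct computation in $\mathcal{A}_3|_{t=1}=k[X]/(X^2-1)$ is therefore the only reasonable one, and your verifications of items 1--5 and the $\mathbf{r}$ half of item~6 are correct.

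However, there is a genuine sign problem in your treatment of the $\mathbf{g}$ case of item~6, and in fact in the statement of item~6 itself. You write down the two sides
\[
\Delta(\mathbf{g}) = \tfrac{1}{2}\bigl(\Delta(1)-\Delta(X)\bigr) = \tfrac{1}{2}\bigl(1\otimes X + X\otimes 1 - X\otimes X - 1\otimes 1\bigr)
\]
and
\[
2\,\mathbf{g}\otimes\mathbf{g} = \tfrac{1}{2}\bigl(1\otimes 1 - 1\otimes X - X\otimes 1 + X\otimes X\bigr),
\]
and assert that they match, but comparing term by term these two expressions are \emph{negatives} of each other. The correct identity is $\Delta(\mathbf{g}) = -2\,\mathbf{g}\otimes\mathbf{g}$. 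This can be cross-checked conceptually: since $m\circ\Delta$ is multiplication by $2X$ in $\mathcal{F}_3|_{t=1}$ and $X\mathbf{g}=-\mathbf{g}$ (your item~4), if $\Delta(\mathbf{g})=c\,\mathbf{g}\otimes\mathbf{g}$ then applying $m$ and using $\mathbf{g}^2=\mathbf{g}$ gives $c\,\mathbf{g} = 2X\mathbf{g} = -2\mathbf{g}$, so $c=-2$. So the lemma as printed contains a typographical sign error, your proof sketch inherits it by not completing the final comparison, and a careful write-up should state $\Delta(\mathbf{g}) = -2\,\mathbf{g}\otimes\mathbf{g}$. (This correction does not disturb the way the lemma is subsequently used in Theorem~\ref{thm:GeneratorsforLee} or Proposition~\ref{prop:RasCobBDS}, where only the vanishing of the mixed-color components and the proportionality to $\mathbf{g}\otimes\mathbf{g}$ matter, not the overall sign.)
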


Using Lemma \ref{lem:redgreenproperties} we can begin to prove the following theorems:

\begin{thm}[Generalization of \cite{BarNatanMorrisonKaroubi} Theorem 1.2]\label{thm:GeneratorsforLee}
Within the Karoubi envelope the Khovanov-Lee complex of a virtual knot or link $K$ is homotopy equivalent to a complex with one generator for each alternately coloured smoothing of $K$ {\rm on an ALD with cross cuts} and with vanishing differentials.
\end{thm}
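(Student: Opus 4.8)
The plan is to mimic the Bar-Natan–Morrison argument (\cite{BarNatanMorrisonKaroubi} Theorem 1.2) within the Karoubi envelope $Kar(Cat(\widehat{\mathcal C}))$, paying careful attention to the new data that the virtual setting introduces: the cross cuts (Mobius bands) on the ALD and the bar operation attached to them. First I would recall from Lemma \ref{lem:redgreenproperties} that on each strand of every state in the cube of resolutions we have a direct sum decomposition $\mathcal O \cong \mathbf r \oplus \mathbf g$ coming from the complementary disjoint projections $\mathbf r, \mathbf g$. Distributing this decomposition over all loops of every smoothing expresses the whole complex $\widehat{\mathcal C}(K)$, inside $Kom(Kar(Cat(\widehat{\mathcal C})))$, as a direct sum indexed by \emph{all} colourings (red/green assignments) of the loops of all smoothings of $K$ on the cross-cut ALD. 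The claim then reduces to two things: (i) a colouring contributes a nonzero generator only when it is \emph{alternately coloured} — i.e. at every saddle the two (or one) incident loops meet a red and a green region consistently, and the Mobius-band cross cuts are respected — and (ii) the differential between any two surviving generators vanishes.

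For step (i), I would run through the local pictures. At a saddle with two loops merging (multiplication) or one loop splitting (comultiplication), properties 3 and 6 of Lemma \ref{lem:redgreenproperties} force: $m(\mathbf r\otimes\mathbf g)=m(\mathbf g\otimes\mathbf r)=\mathbf r\cdot\mathbf g=0$, so a non-alternate merge kills the summand, while $\Delta(\mathbf r)=2\,\mathbf r\otimes\mathbf r$, $\Delta(\mathbf g)=2\,\mathbf g\otimes\mathbf g$, so a split always produces the two like-coloured loops. The genuinely new local move is passing a coloured strand across a cut locus: by property 5, $\overline{\mathbf r}=\mathbf g$ and $\overline{\mathbf g}=\mathbf r$, so the bar operation attached to each cut locus \emph{swaps} the colour. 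Hence a colouring of the loops of a fixed smoothing is consistent precisely when the colour is locally constant on each loop \emph{after} accounting for a parity flip at every cut-locus crossing — which is exactly the combinatorial content of an alternately coloured smoothing on the ALD with cross cuts, and (as in Bar-Natan–Morrison, who observe the alternately coloured smoothings correspond to orientations) this matches the source-sink data. Colourings that are not alternate map to zero under the $\mathbf r,\mathbf g$ algebra and get split off as contractible (in fact zero) summands.

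For step (ii), once only alternately coloured generators remain, I would show every arrow of the cube between two such generators is null. An edge of the cube changes one $A$-smoothing to a $B$-smoothing, i.e. applies one partial differential $\partial_\tau$, which is $m$, $\Delta$, or $\eta$ pre- and post-composed with bar operations and a sign from the order. If the source colouring is alternate, then at the site $\tau$ the two incident loops carry opposite colours, so the merge map $m$ applied there is $m(\mathbf r\otimes\mathbf g)=0$; for a split the resulting colouring is again alternate but lands in a \emph{different} direct summand than any target generator, so the component is zero; and $\eta=0$ identically. Thus after deleting the acyclic (zero-differential) complement, the surviving subcomplex has all differentials equal to zero, giving the stated homotopy equivalence. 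I would finish by noting the bookkeeping with orientation/order signs (Theorem \ref{thm:Anticommute}) does not affect vanishing, only overall signs, and the Karoubi-envelope passage is legitimate by \cite{BarNatanMorrisonKaroubi} Proposition 3.3.

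The main obstacle I anticipate is step (i): precisely establishing that the colour-flip induced by the bar operation at each cross cut makes the notion of ``alternately coloured smoothing on the cross-cut ALD'' well-defined and independent of the arbitrary choices (marked points / global orientation and order) — i.e. that a loop passing through an \emph{odd} number of cut loci admits no consistent colouring in the naive sense and instead the consistent data is the one read off the ALD's checkerboard shading of Figures \ref{fig:ASDCheckerboardSourceSink}–\ref{fig:ASDCheckerboardCutLoci}. This is where the Mobius-band / non-orientability genuinely enters, and it must be checked that the count of such colourings is still finite and in bijection with the alternately coloured smoothings of the figure; the rest is the standard Bar-Natan–Morrison Gaussian-elimination-free argument.
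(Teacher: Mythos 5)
Your high-level strategy is the same as the paper's: pass to the Karoubi envelope, decompose every loop of every state by the complementary projections $\mathbf r$ and $\mathbf g$ (Lemma~\ref{lem:redgreenproperties}), use $\overline{\mathbf r}=\mathbf g$ to see that the bar operation at a cut locus flips the colour, and conclude that what survives are the alternately coloured smoothings on the cross-cut ALD. Your handling of the genuinely virtual ingredient — that a cut locus acts as a colour swap, so consistency of colours is read off the checkerboard shading of the ALD with cross cuts — is correct and is exactly how the paper incorporates the non-classical data.

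However, there is a genuine gap in how you dispose of the non-alternately-coloured summands, which is the crux of the Bar-Natan--Morrison argument. You write that ``colourings that are not alternate map to zero under the $\mathbf r,\mathbf g$ algebra and get split off as contractible (in fact zero) summands,'' and later speak of deleting an ``acyclic (zero-differential) complement.'' Neither statement is right, and the two are inconsistent with each other: a complex with zero differential is acyclic only if it is the zero complex. The like-coloured resolutions are not zero objects and the differentials between them are not zero. What actually happens, as the paper records in the single-crossing $4\times4$ matrix
$[\![\Across]\!]\cong[\rrggA]\to[\rrggB]$,
is that the all-red and all-green diagonal entries are \emph{nonzero isomorphisms} in the Karoubi envelope (the maps $\saddler$ and $\saddleg$), while all other entries vanish. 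The non-alternate part of the complex therefore cancels by Gaussian elimination of these isomorphism differentials, not by being zero. Your proposal never produces those isomorphisms nor invokes their cancellation, so it does not actually establish the homotopy equivalence. Separately, your ``split'' case in step~(ii) is vacuous and should be dropped: in an alternately coloured smoothing the two arcs at any site have distinct colours, hence lie on distinct circles, so every partial differential out of such a generator is a merge $m(\mathbf r\otimes\mathbf g)=0$; there is no split to analyse.

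To repair the argument, replace your step~(i) by the paper's local computation: decompose the one-crossing complex, observe the $4\times 4$ differential has only the two contractible diagonal entries, cancel them, and then reassemble over the whole diagram by the surface-algebra (ALD-with-cross-cuts) structure, keeping Lemma~\ref{lem:cutlocicancel} in hand for independence of the marked-point and global-order choices. Your step~(ii) (restricted to the merge case) then closes the argument.
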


\begin{thm}[Generalization of \cite{BarNatanMorrisonKaroubi} Proposition 1.3]\label{thm:AltColoredSmoothingsforLee}
A virtual link $K$ with $c$-components has exactly $2^c$ alternately coloured smoothings {\rm on an ALD with cross cuts}. These smoothings are in a bijective correspondence with the $2^c$ possible orientations of the $c$ components of $K$.
\end{thm}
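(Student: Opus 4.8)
The plan is to follow Bar-Natan and Morrison's argument for their Proposition 1.3, isolating the two features that are genuinely new in the virtual setting: virtual crossings, which will turn out to be transparent, and cross cuts, which behave exactly like the bar operation and swap $\mathbf{r}$ and $\mathbf{g}$. Fix once and for all the canonical source-sink orientation (Figure \ref{fig:canonicalsourcesink}) and the induced checkerboard shading of the ALD with cross cuts (Figures \ref{fig:ASDCheckerboardSourceSink} and \ref{fig:ASDCheckerboardCutLoci}). Here an \emph{alternately coloured smoothing} means a choice of smoothing at each classical crossing together with an assignment of a colour in $\{\mathbf{r},\mathbf{g}\}$ to each semi-arc of the resulting state, subject to: the local colour pattern at each classical crossing is the alternating one; colours are unchanged across virtual crossings; and colours are swapped across each cross cut. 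This last requirement is exactly Lemma \ref{lem:redgreenproperties}(5), $\overline{\mathbf{r}}=\mathbf{g}$; so a colouring is really a global section of a $\mathbb{Z}/2$-twisted local system on the state, whose monodromy around a loop is $\mathbf{r}\leftrightarrow\mathbf{g}$ precisely when the loop crosses an odd number of cut loci.

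First I would construct the map from orientations of $K$ to alternately coloured smoothings. Given an orientation $\mathfrak{o}$, smooth every classical crossing by its oriented (Seifert) smoothing; this is canonically defined and produces a disjoint union of oriented circles on the ALD. Colour a semi-arc $\mathbf{r}$ or $\mathbf{g}$ according to whether $\mathfrak{o}$ agrees or disagrees with the canonical source-sink direction on that semi-arc (pinning the convention down via the figures). Three local checks finish the construction: (i) at a classical crossing, the oriented smoothing together with this colouring rule reproduces the canonical alternately coloured local picture --- a finite case analysis over the crossing sign and over the agreement pattern of $\mathfrak{o}$ with the source-sink orientation on the two incident strands, and identical to the planar analysis of Bar-Natan and Morrison with the checkerboard shading playing the role of the source-sink data; (ii) virtual crossings are transparent, since $\mathfrak{o}$ and the source-sink data both pass straight through; (iii) across a cut locus the source-sink orientation reverses while $\mathfrak{o}$ does not --- this is the defining property of a cut locus --- so ``agrees'' and ``disagrees'' interchange and the colour flips, matching the required condition. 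In particular the twisted local system always admits a global section, so the colouring is consistent.

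Next I would produce the inverse. Given an alternately coloured smoothing, check (i) read backwards shows that at each classical crossing the smoothing must be the oriented smoothing for a unique local choice of orientations of the two strands, and that local choice is the one recorded by the colours via the agree/disagree rule. Propagating along semi-arcs and through virtual crossings and cross cuts, using (ii) and (iii), these local orientations patch to a global orientation $\mathfrak{o}$ of $K$; consistency around each component reduces once more to the fact that a cut locus contributes a reversal to the source-sink orientation on both of its sides. The two constructions are visibly mutually inverse, giving the claimed bijection. Finally, an orientation of $K$ is an independent choice of orientation on each of its $c$ components, so there are exactly $2^{c}$ of them, hence exactly $2^{c}$ alternately coloured smoothings.

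The main obstacle I anticipate is bookkeeping of conventions rather than any conceptual difficulty: one must line up four separate conventions --- the canonical source-sink orientation, the canonical local orientation and order of Figure \ref{fig:CanonicalOrientation}, the right-hand-rule checkerboard shading, and the identification $\mathbf{r}=(1+X)/2$, $\mathbf{g}=(1-X)/2$ --- so that the colour flip across a cut locus really corresponds to the bar operation and the ``oriented smoothing $=$ alternately coloured local picture'' dictionary at a classical crossing holds on the nose. Once these are fixed, the virtual crossings contribute nothing new and the cross cuts are handled entirely by $\overline{\mathbf{r}}=\mathbf{g}$, so the classical count of Bar-Natan and Morrison goes through essentially verbatim.
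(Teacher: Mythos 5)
Your proof is correct and follows essentially the same route as the paper's: both set up a bijection by applying the oriented (Seifert) smoothing and then reading off the $\mathbf{r}/\mathbf{g}$ label from an agree/disagree comparison between the chosen orientation and the local geometry, with the cut loci supplying the colour flip via $\overline{\mathbf{r}}=\mathbf{g}$. The only cosmetic difference is that the paper phrases the comparison in terms of the clockwise orientation on the shaded regions of the checkerboard-coloured ALD with cross cuts (Figures \ref{fig:ALDandColors}, \ref{fig:ALDCutLociColors}), whereas you compare directly against the canonical source-sink orientation on semi-arcs; since the checkerboard shading is defined from the source-sink data by the right-hand rule, these are equivalent bookkeeping choices, and your version spells out the local verification at crossings, virtual crossings, and cut loci that the paper leaves to the figures.
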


\begin{figure}[H]
\centering
    \includegraphics[width=.5\textwidth]{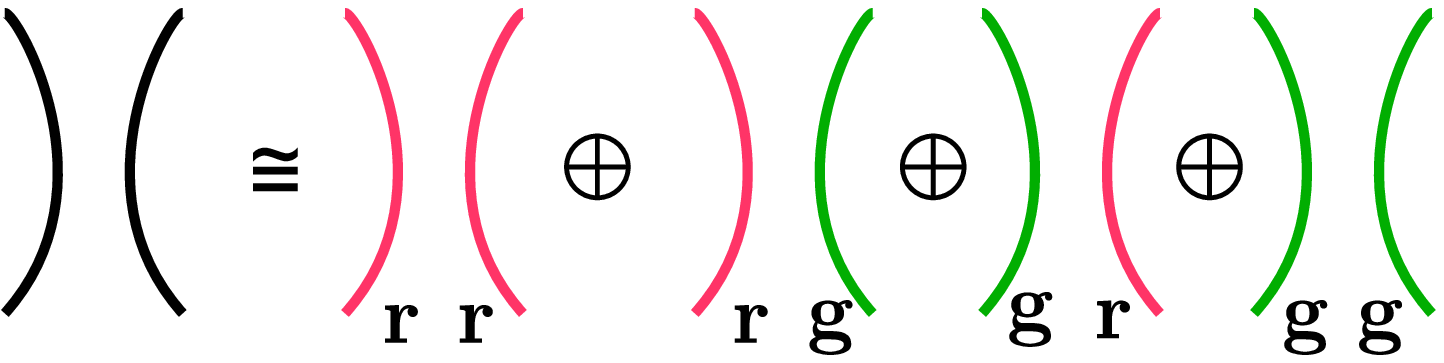}
    \caption{Red-green decomposition at a local differential}
\label{fig:doubleprojdecomp}
\end{figure}

\begin{proof}[Proof of Theorem \ref{thm:GeneratorsforLee}]
As Bar-Natan and Morrison have shown, Theorem \ref{thm:GeneratorsforLee} follows from Lemma \ref{lem:redgreenproperties} by decomposing the states of the Khovanov-Lee complex into smaller complexes at the crossings and then reassembling the states of the Khovanov-Lee complex in the method of a planar (or in our case surface) algebra. In particular, consider the complex arising from a single crossing. The A-state and B-state can be decomposed as in Figure \ref{fig:doubleprojdecomp}. It follows from Lemma \ref{lem:redgreenproperties} that the map between the A-state and B-State can be represented by a $4 \times 4$ matrix which has only two non-zero entries:

\[  [\![ \Across ]\!] \cong \left[\rrggA \right] \xrightarrow{\left(
                                                                \begin{array}{cccc}
                                                                  \saddler & 0 & 0 & 0 \\
                                                                  0 & 0 & 0 & 0 \\
                                                                  0 & 0 & 0 & 0 \\
                                                                  0 & 0 & 0 & \saddleg \\
                                                                \end{array}
                                                              \right)
} \left[\rrggB \right].\]

The two non-zero entries are contractible maps, one between the all-$red$ coloring and the other between the all-$green$ coloring. Thus, up to homotopy, we have $[\![ \Across ]\!] \cong \left[\rgA \right] \xrightarrow{0} \left[\rgB \right].$

No consider each state in the Khovanov-Lee complex as the tensor product of tangles on an ALD with cross cuts.  After applying the above differential along with Lemma \ref{lem:redgreenproperties}, we are left with a collection of states with alternating colorings at each smoothing. Finally, reassembling the Khovanov-Lee complex and applying Lemma \ref{lem:redgreenproperties} reduces the remaining collection of states to those stated in the lemma.
 \end{proof}

 \begin{proof}[Proof of Theorem \ref{thm:AltColoredSmoothingsforLee}]

Figures \ref{fig:ALDandColors} and \ref{fig:ALDCutLociColors} describe the bijection between checkerboard colored ALDs with cross cuts and alternately colored smoothings. Starting with an oriented virtual knot diagram on an ALD with cross cuts if one places a clockwise orientation on the shaded regions, an arc is labeled $red$ if its oriented smoothing agrees with the clockwise orientation and otherwise is labeled $green$.

Going the other direction an alternately colored smoothing induces an alternating shading on the regions of the ALD with cross cuts. Placing a clockwise orientation on the shaded regions we can orient the $red$ arcs so that they agree with the clockwise orientation and $green$ arcs disagree. This in turn induces an orientation on the virtual knot diagram which agrees with the $red$ arcs. These equivalences can be depicted diagrammatically as in Figures \ref{fig:ALDandColors} and \ref{fig:ALDCutLociColors}.
\end{proof}

  \begin{figure}[h!]
\centering
    \includegraphics[width=.5\textwidth]{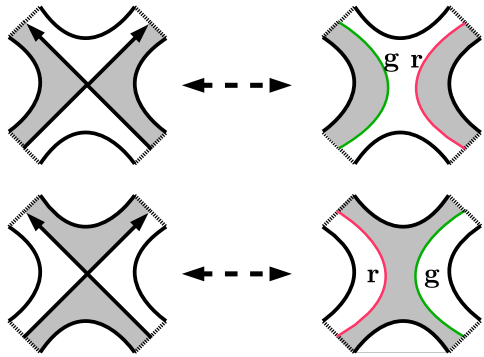}
    \caption{Red-green coloring and checkerboard coloring on an ALD}
\label{fig:ALDandColors}
\end{figure}

\begin{figure}[h!]
\centering
    \includegraphics[width=.4\textwidth]{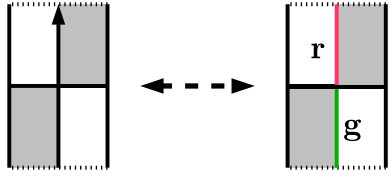}
    \caption{Red-green coloring of an ALD at a cross cut}
\label{fig:ALDCutLociColors}
\end{figure}

 As an example of an application of this approach to calculating the canonical generators of the Khovanov-Lee homology, see Figure \ref{fig:ExampleCannonicalStates} where we show the canonical generators for a two crossing virtual knot arising from the source-sink diagram with cut loci as given in Figure \ref{fig:examplecutloci}.

\begin{figure}[h!]
\centering
    \includegraphics[width=.5\textwidth]{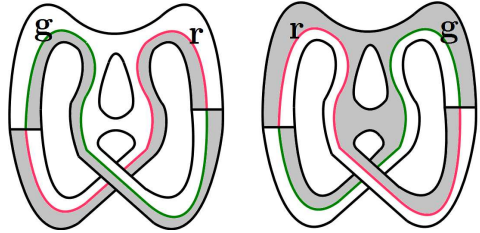}
    \caption{The canonical generators for a two crossing virtual knot}
\label{fig:ExampleCannonicalStates}
\end{figure}

\begin{rem}
The one-to-one correspondence between checkerboard colorings of the ALD with cross cuts and the $red$ and $green$ labels described above allows one to work solely with knot diagrams having the appropriate $red$ and $green$ labels. This provides an analogous correspondence between the Khovanov-Lee construction for virtual knots as described here and the Khovanov-Lee constructions for classical knots found in the literature. The latter constructions are typically described solely with labeled circle in the plane. To emphasize this correspondence we provide Figure \ref{fig:ExampleCannonicalStatesNoALD} which provides an equivalent representation of Figure \ref{fig:ExampleCannonicalStates} without the ALD.
\end{rem}

\begin{figure}[h!]
\centering
    \includegraphics[width=.4\textwidth]{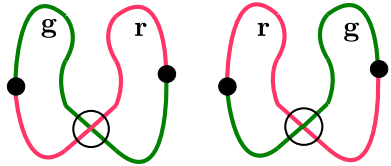}
    \caption{An equivalent representation for the canonical generators for a two crossing virtual knot}
\label{fig:ExampleCannonicalStatesNoALD}
\end{figure}

\begin{rem}
After posting a previous version of this paper on the arXiv it was pointed out to the authors that, earlier the same month, Tubbenhauer \cite{Tubbenhauer} posted a similar derivation of Khovanov-Lee homology for virtual knots using the Karoubi envelope. Comparing the two constructions, there are a few technical differences. In particular the signs appearing our Lemma \ref{lem:redgreenproperties} and the related lemma in his paper do not always coincide. These difference appear to arise from the differences between our construction of Khovanov homology following Manturov and the cobordism approach taken in his paper.
\end{rem}

\section{Virtual Knot Cobordisms }

\subsection{Virtual Knot Cobordisms and the Virtual Slice Genus}
The definitions and basic material from this section are from \cite{VKC}.

 Two oriented knots or links $K$ and $K'$ are {\it virtually cobordant}  if one may be obtained from the other by a sequence of virtual isotopies (Reidemeister moves plus detour moves) plus births, deaths and oriented saddle points, as illustrated in  Figure~\ref{saddle}. A {\it birth} is the introduction into the diagram of an isolated unknotted cycle. A {\it death} is the removal from the diagram of an isolated unknotted cycle. A saddle point move results from bringing oppositely oriented arcs into proximity and resmoothing the resulting site to obtain two new oppositely oriented arcs. See Figure \ref{saddle} for an illustration of the process. Figure~\ref{saddle} also illustrates the {\it schema} of surfaces that are generated by  cobordism process. These are abstract surfaces with well defined genus in terms of the sequence of steps in the cobordism. In  Figure \ref{saddle} we illustrate two examples of genus zero, and one example of genus 1. We say that a cobordism has genus $g$ if its schema has that genus. Two knots are {\it concordant} if there is a cobordism of genus zero  connecting them. A virtual knot is said to be a {\it slice} knot if it is virtually concordant to the unknot, or equivalently if it is virtually concordant to the empty knot (The unknot is concordant to the empty knot via one death.) Finally we define the {\it virtual slice genus} (or simply {\it slice genus} when in the virtual category) of a knot or link to be the minimal genus of all such schema between the knot or link and the unknot.
\bigbreak

\begin{figure}[h!]
     \begin{center}
     \begin{tabular}{c}
     \includegraphics[width=5cm]{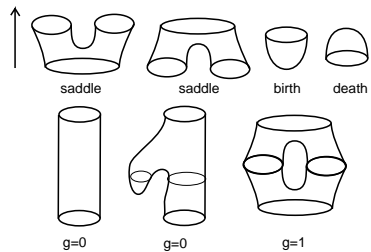}
     \end{tabular}
     \caption{Saddles, births and deaths}
     \label{saddle}
\end{center}
\end{figure}

In Figure~\ref{vstevedore} we illustrate the {\it virtual Stevedore's knot, VS,} and show that it is a slice knot in the sense of the above definition. We will use this example to illustrate our theory of virtual knot cobordism, and the questions that we are investigating.
\bigbreak
\bigbreak

\begin{figure}[h!]
     \begin{center}
     \begin{tabular}{c}
     \includegraphics[width=7cm]{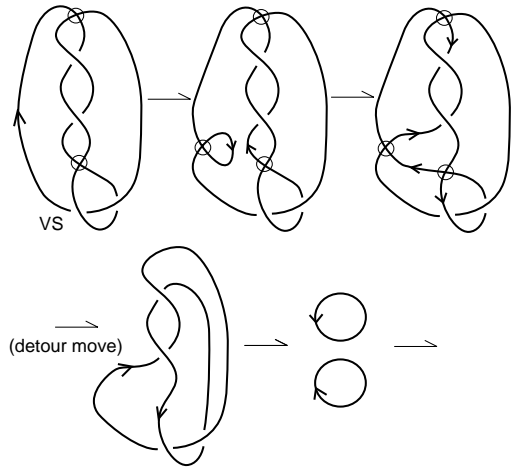}
     \end{tabular}
     \caption{Virtual Stevedore is slice}
     \label{vstevedore}
\end{center}
\end{figure}

The virtual Stevedore ($VS$) is an example that illustrates the viability of this theory. One  can prove that $VS$ is not classical by showing that it is represented on a surface of genus one and no smaller. The technique for this is to use the bracket expansion on a toral representative of $VS$ and examine the structure of the state loops on that surface.  See \cite{VKC}.
\bigbreak

\subsection{Virtual Knot Cobordisms and Seifert Surfaces}

It is a well-known that every oriented classical knot or link bounds an embedded orientable surface in three-space. A representative surface of this kind can be obtained by the algorithm due to Seifert (See \cite{OnKnots}). We have illustrated Seifert's algorithm for a trefoil diagram in Figure~\ref{seifert}. The algorithm proceeds as follows: At each oriented crossing in a given diagram $K,$ smooth that crossing in the oriented manner (reconnecting the arcs locally so that the crossing disappears and the connections respect the orientation). The result of this operation is a collection of oriented simple closed curves in the plane, usually called the \emph{ Seifert circles}. To form the \emph{ Seifert surface} $F(K)$ for the diagram $K,$ attach disjoint discs to each of the Seifert circles, and connect these discs to one another by local half-twisted bands at the sites of the smoothing of the diagram. This process is indicated in the Figure~\ref{seifert}. In that figure we have not completed the illustration of the outer disc.

It is important to observe that we can calculate the genus of the resulting surface quite easily from the combinatorics of the classical knot diagram $K.$ For purposes of simplicity, we shall assume that we are dealing with a knot diagram (one boundary component) and leave the case of links to the reader.

\begin{figure}[h!]
     \begin{center}
     \begin{tabular}{c}
     \includegraphics[width=7cm]{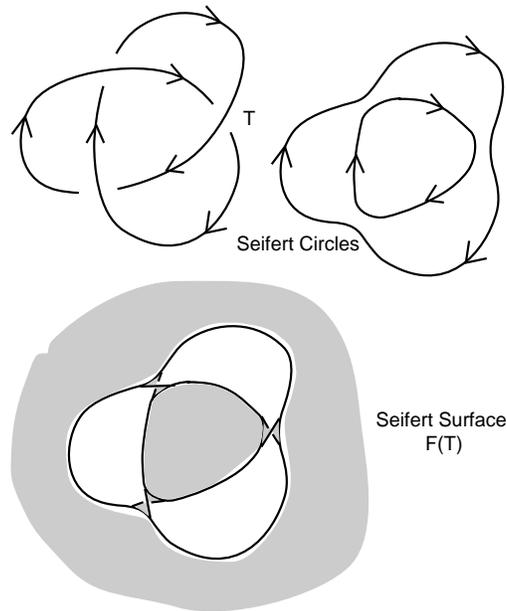}
     \end{tabular}
     \caption{Classical Seifert surface}
     \label{seifert}
\end{center}
\end{figure}

\begin{lem} Let $K$ be a classical knot diagram with $n$ crossings and $r$ Seifert circles.
then the genus of the Seifert Surface $F(K)$ is given by the formula
$$g(F(K)) =\frac{( -r + n +1)}{2}.$$
\end{lem}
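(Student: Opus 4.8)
The plan is to compute the Euler characteristic of $F(K)$ two ways and solve for the genus. First I would recall that $F(K)$ is built from $r$ disjoint discs (one per Seifert circle) joined by $n$ half-twisted bands (one per crossing). Since each disc is contractible and each band is (up to homotopy) an arc glued along two subintervals of the disc boundaries, a CW-decomposition of $F(K)$ gives $\chi(F(K)) = r - n$: the $r$ discs contribute $+r$, and attaching a band is homotopy equivalent to attaching a $1$-cell, each contributing $-1$. One should check this is independent of how the bands link or twist, since Euler characteristic is a homotopy invariant and the band attachments only depend on the combinatorial pattern of which discs they connect, not on the embedding in $3$-space.

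Next I would invoke the classification of compact orientable surfaces with boundary: a connected orientable surface of genus $g$ with $b$ boundary components has $\chi = 2 - 2g - b$. Here we are assuming $K$ is a knot, so $F(K)$ has exactly one boundary component, $b = 1$, giving $\chi(F(K)) = 1 - 2g(F(K))$. One small point to address is connectedness of $F(K)$: since $K$ is a single knot (connected), the diagram is connected, hence the discs-and-bands surface is connected, so the classification applies directly.

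Equating the two expressions, $r - n = 1 - 2g(F(K))$, and solving yields
$$g(F(K)) = \frac{1 - r + n}{2} = \frac{-r + n + 1}{2},$$
which is the claimed formula.

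The main obstacle — really the only subtle point — is justifying the computation $\chi(F(K)) = r - n$ rigorously, i.e. making precise that attaching a half-twisted band to a surface-with-boundary changes the Euler characteristic by $-1$ regardless of the twisting or of how the band is embedded. This follows because a band is a $2$-cell attached along part of its boundary (equivalently, a regular neighborhood of an arc), and up to homotopy equivalence the result is the same as wedging on a $1$-cell; the twisting affects orientability considerations but not $\chi$. Given the paper's stated convention of restricting to the knot (one boundary component) case and deferring links to the reader, no further case analysis is needed.
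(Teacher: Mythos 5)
Your proof is correct and reaches the same Euler-characteristic count $\chi(F(K)) = r - n$, but via a different decomposition than the paper. You use the discs-and-bands (handle) model directly: the $r$ Seifert discs contribute $+r$, and each of the $n$ half-twisted bands is, up to homotopy, an attached $1$-cell contributing $-1$. The paper instead retracts $F(K)$ onto the projected $4$-valent graph of the diagram with $2$-cells attached along the Seifert circles, giving $\chi = V - E + F = n - e + r$, and then uses the $4$-regularity of the graph ($4n = 2e$, so $e = 2n$) to recover $\chi = -n + r$. Both are legitimate CW/homotopy models of the same surface; yours is the more standard handle-theoretic count and avoids the edge-counting step, while the paper's sets up the graph-theoretic viewpoint that it reuses verbatim when generalizing to the non-planar virtual case in Lemma~\ref{lem:SeifertGenus}. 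Your remarks on connectedness (since $K$ is a knot, the diagram and hence $F(K)$ is connected) and on the single boundary component, giving $\chi = 1 - 2g$, match the paper's use of the classification of orientable surfaces with boundary.
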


\begin{proof}
The surface $F(K),$ as described prior to the statement of the Lemma, retracts to a cell complex consisting of the projected graph of the knot diagram with two-cells attached to each cycle in the graph that corresponds to a Seifert circle. Thus we have that the Euler characteristic of this suface is given the the formula $$\chi(F(K)) = n - e + r$$ where $n,$ the number of crossings in the diagram, is the number of zero-cells, $e$ is the number of one-cells (edges) in the projected diagram (from node to node), and $r$ is the number of Seifert circles as these are in correspondence with the two-cells. However, we know that $4n = 2e$ since there are four edges locally incident to each crossing. Thus,
$$\chi(F(K)) =  - n + r.$$ Furthermore, we have that $\chi(F(K)) = 1 - 2g(F(K)),$ since  this surface has a single boundary component and is orientable. From this it follows that $1-2g(F(K)) = -n + r,$ and hence
$$g(F(K)) =\frac{( -r + n +1)}{2}.$$
\end{proof}

\begin{figure}[h!]
     \begin{center}
     \begin{tabular}{c}
     \includegraphics[width=7cm]{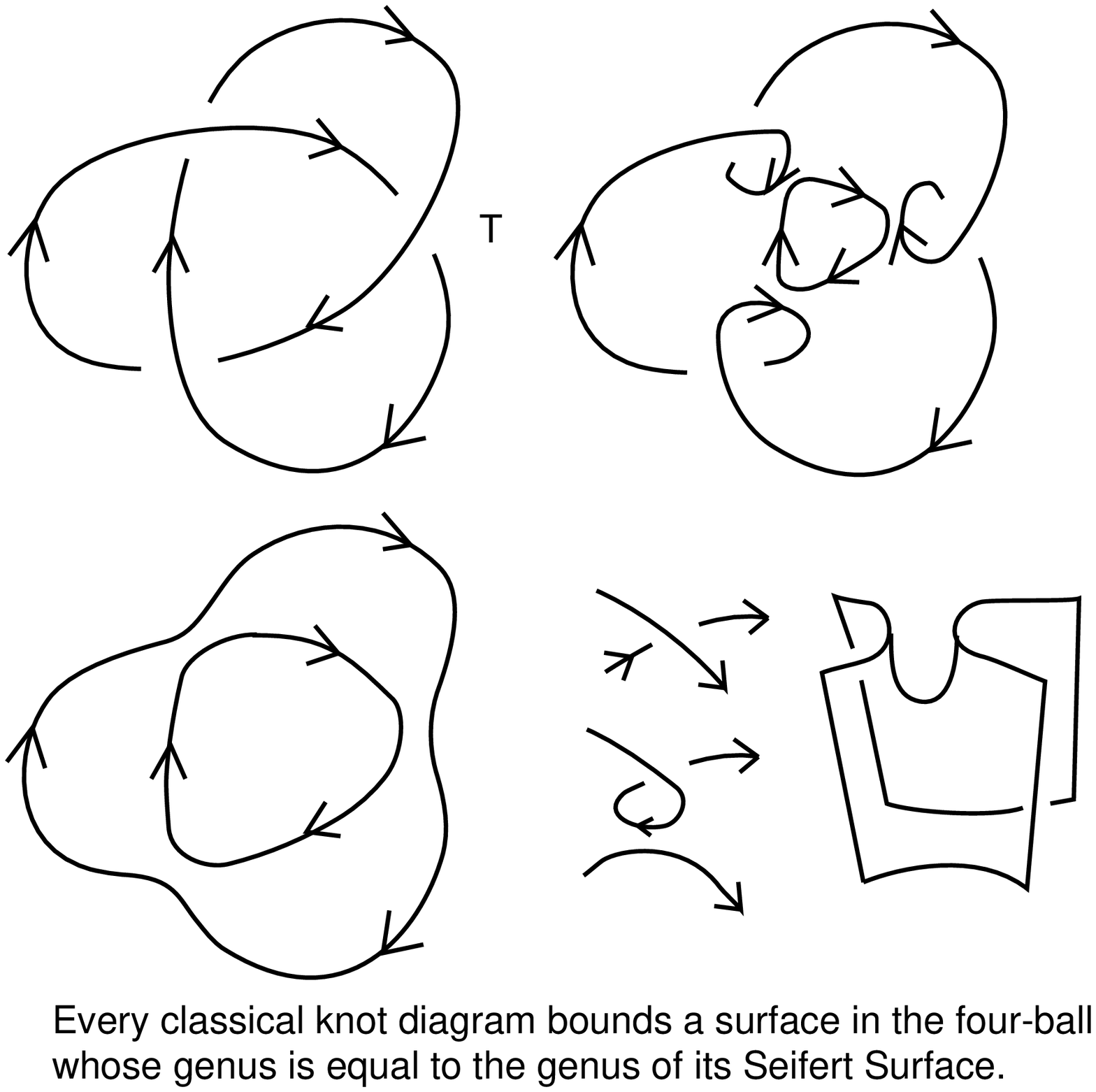}
     \end{tabular}
     \caption{Classical cobordism surface}
     \label{classicalcob}
\end{center}
\end{figure}

We now observe that \emph{for any classical knot $K,$ there is a surface bounding that knot in the four-ball that is homeomorphic to the Seifert surface}.  One can construct this surface by  pushing the Seifert
surface into the four-ball keeping it fixed along the boundary. We will give here a  different description of
this surface as indicated in Figure~\ref{classicalcob}. In that figure we \emph{perform a saddle point transformation at every crossing of the diagram.} The result is a collection of unknotted and unlinked curves. By our interpretation of surfaces in the four-ball obtained by saddle moves and isotopies, we can then bound each of these curves by discs (via deaths of circles) and obtain a surface $S(K)$ embedded in the four-ball with boundary $K.$ As the reader can easily see, the curves produced by the saddle transformations are in one-to-one correspondence with the Seifert circles for $K,$ and it easy to verify that $S(K)$ is homeomorphic with the Seifert surface $F(K).$ Thus we know that $g(S(K)) =\displaystyle\frac{( -r + n +1)}{2}.$ In fact the same argument that we used to analyze the genus of the Seifert surface applies directly to the construction of $S(K)$ via saddles and minima.

\begin{figure}[h!]
     \begin{center}
     \begin{tabular}{c}
     \includegraphics[width=7cm]{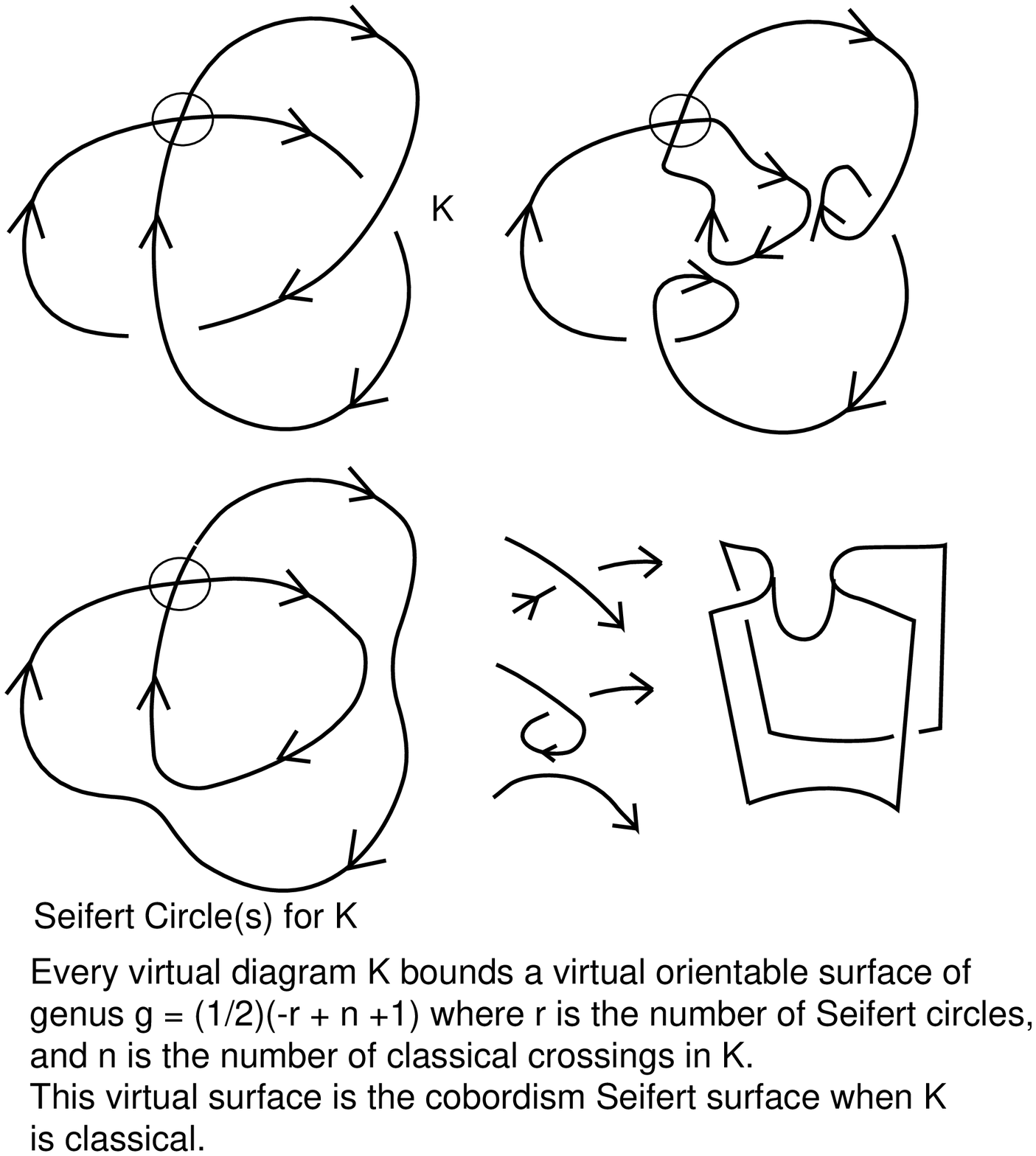}
     \end{tabular}
     \caption{Virtual cobordism Seifert surface}
     \label{virtseifert}
\end{center}
\end{figure}

\begin{figure}[h!]
     \begin{center}
     \begin{tabular}{c}
     \includegraphics[width=7cm]{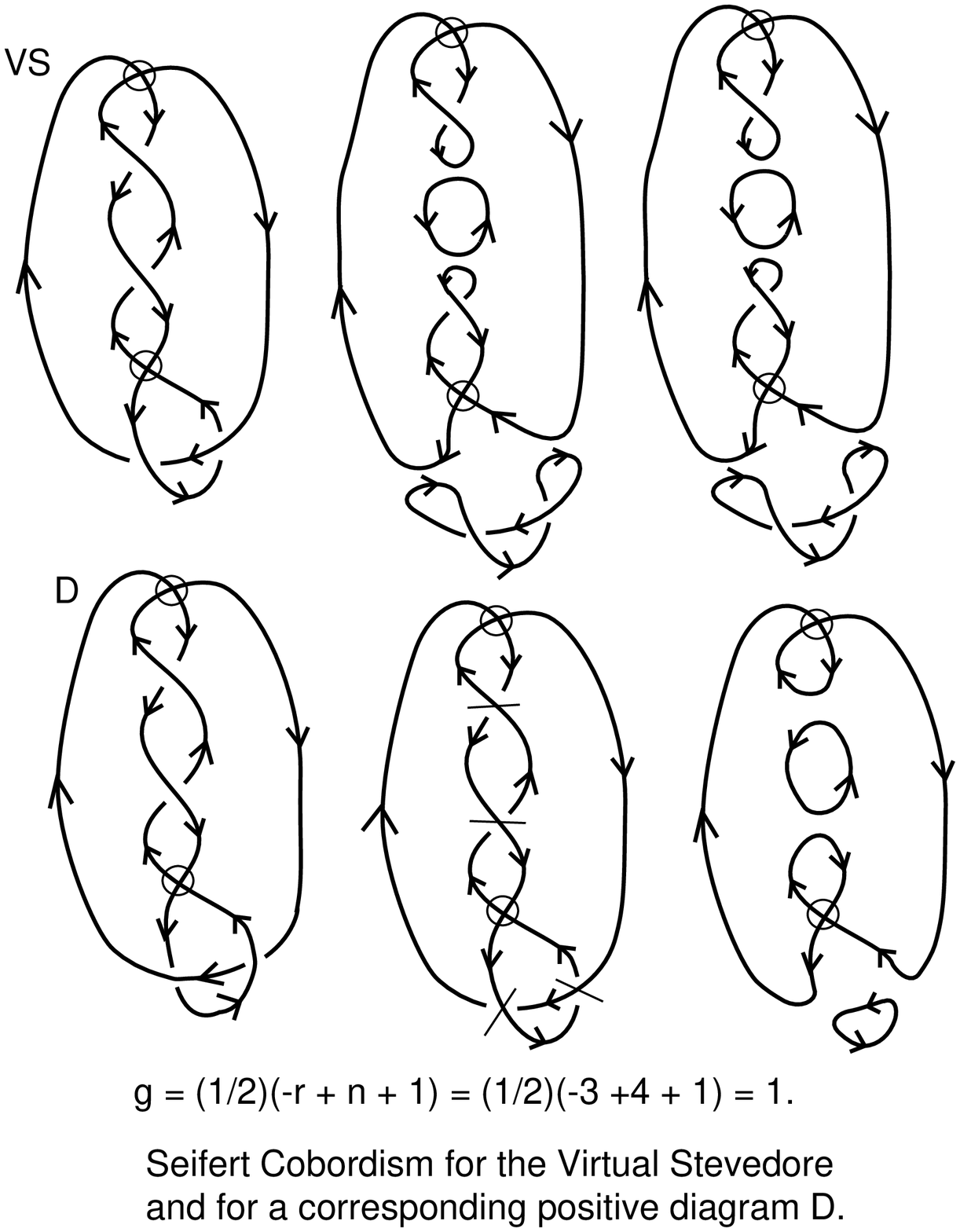}
     \end{tabular}
     \caption{Virtual Stevedore cobordism Seifert surface}
     \label{vstevedoreseifert}
\end{center}
\end{figure}

Now the stage is set for generalizing the Seifert surface to a surface $S(K)$ for virtual knots $K.$
View Figure~\ref{virtseifert} and Figure~\ref{vstevedoreseifert}. In these figures we have performed a saddle transformation at each classical crossing of a virtual knot $K.$ The result is a collection of unknotted curves that are isotopic (by the first classical Reidemeister move) to curves with only virtual crossings. Once the first Reidemeister moves are performed, these curves are identical with the \emph{virtual Seifert circles} obtained from the diagram $K$ by smoothing all of its classical crossings. We can then isotope these circles into a disjoint collection of circles (since they have no classical crossings) and cap them with discs in the four-ball. The result is a virtual surface $S(K)$ whose boundary is the given virtual knot $K.$ We will use the terminology \emph{virtual surface in the four-ball} for this surface schema. In the case of a virtual slice knot, we have that the knot bounds a virtual surface of genus zero. But with this construction we have proved the following lemma.

\begin{lem}\label{lem:SeifertGenus}
 Let $K$ be a virtual knot diagram, then the virtual Seifert surface $S(K)$ constructed above
has genus given by the formula $$g(S(K)) = \frac{( -r + n +1)}{2}$$ where $r$ is the number of virtual Seifert
circles in the diagram $K$ and $n$ is the number of classical crossings in the diagram $K.$
\end{lem}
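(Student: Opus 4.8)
The plan is to imitate, essentially verbatim, the Euler-characteristic computation used for the classical Seifert surface lemma above, adjusting only for the features special to the virtual setting. First I would describe $S(K)$ concretely as a cobordism schema: starting from the diagram $K$, perform an oriented saddle move at each of the $n$ classical crossings; the result is a collection of curves with only virtual crossings together with (possibly) some classical curls, which after first Reidemeister moves becomes precisely the collection of $r$ virtual Seifert circles; finally cap off each of these $r$ circles with a disc (a death). This exhibits $S(K)$ as an orientable surface schema in the four-ball with $\partial S(K) = K$.

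Next I would compute $\chi(S(K))$ by the same retraction argument as in the classical lemma. The surface $S(K)$ deformation retracts onto the underlying $4$-valent graph of the diagram $K$ — whose vertices are the $n$ classical crossings, while virtual crossings are artifacts of the planar picture and are not vertices — with a $2$-cell attached along each of the $r$ virtual Seifert circles. Since each classical crossing has four incident edge-ends, $2e = 4n$, so $e = 2n$, whence $\chi(S(K)) = n - 2n + r = -n + r$. Equivalently, one may read $\chi$ off the movie directly: the $n$ saddles each contribute $-1$, the $r$ deaths each contribute $+1$, and the Reidemeister-I isotopies contribute $0$, giving again $\chi(S(K)) = -n + r$.

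Finally, since $K$ has a single component, $S(K)$ is connected with one boundary circle, and because it is assembled from oriented saddles (plus deaths and isotopies) it is orientable; hence $\chi(S(K)) = 1 - 2 g(S(K))$. Combining with $\chi(S(K)) = -n + r$ yields $1 - 2 g(S(K)) = -n + r$, i.e. $g(S(K)) = \frac{-r + n + 1}{2}$, as claimed.

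I expect the only real subtlety — and therefore the step to write most carefully — to be the bookkeeping around the first Reidemeister moves and the claim that the curves produced by the $n$ saddles become exactly the $r$ virtual Seifert circles: one must check that these isotopies neither merge nor split curves (they only remove curls) and, more importantly, that the \emph{genus of the schema} is by definition the combinatorial quantity extracted from the movie, so that the identity $\chi = 1 - 2g$ may legitimately be applied to the schema rather than to an honestly embedded surface in $S^3$. Everything else reduces to exactly the same count as in the classical lemma.
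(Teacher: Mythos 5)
Your proof is correct and takes essentially the same route as the paper: you retract $S(K)$ onto the abstract $4$-regular graph of classical crossings with $2$-cells attached along the virtual Seifert circles, compute $\chi(S(K)) = -n + r$, and apply $\chi = 1 - 2g$ for a connected orientable surface with one boundary circle. Your alternate movie-based reading of $\chi$ and your remarks on the Reidemeister-I bookkeeping are harmless elaborations of the same argument.
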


\begin{proof}
The proof follows by the same argument given in the classical case. Here the projected virtual diagram gives a four-regular graph $G$ (not necessarily planar) whose nodes are in one-to-one correspondence with the classical crossings of $K.$ The edges of $G$ are in one-to-one correspondence with the edges in the diagram that extend from one classical crossing to the next. We regard $G$ as an abstract graph so the the virtual crossings disappear.
The argument then goes over verbatim in the sense that $G$ with two-cells attached to the virtual Seifert circles is a retract of the surface $S(K)$  constructed by cobordism. The counting argument for the genus is identical to the classical case.
\end{proof}

\section{A Rasmussen Invariant for Virtual Knot Cobordisms}
\subsection{Proof of Invariance}

Recall that the Khovanov-Lee homology is no longer a bi-graded homology theory as the maps are not homogeneous on the quantum grading, but can be regarded as a filtered theory with respect to the quantum grading.  As was first shown by Rasmussen \cite{Rasmussen} the information contained in the filtered theory is still quite powerful. In particular it allows one to get a bound on the genus of a cobordism between two knots.

For virtual knots we use the definition of cobordism given in the previous section. It is important to note that the Rasmussen invariant can be viewed as an obstruction to knot concordance (a cobordism of genus 0) for knots in a thickened surface cross an interval, $S_g \times I \times I$, by viewing the knots in $S_g \times I$ as virtual knots. Note that in constructing such a knot concordance the choice of embedding is very important. It is a quick exercise to show that one can find two embeddings of the unknot in the thickened torus which are not concordant in the sense of Turaev \cite{Turaev}.

\begin{figure}[h!]
\centering
    \includegraphics[width=.5\textwidth]{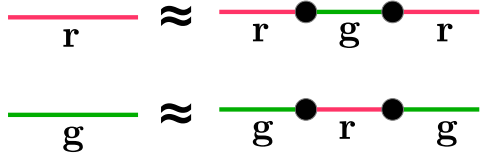}
    \caption{Cut loci cancelation}
\label{fig:CutLociCancelation}
\end{figure}

\begin{lem}\label{lem:cutlocicancel}
Given a canonical generator with alternating colors, one may cancel adjacent cut loci, and the new arc takes the color of the outer arcs. Similarly, one may add pairs of adjacent cut loci to a strand where the arc between the cut loci takes on the opposite color to the original strand. This equivalence is as depicted in Figure \ref{fig:CutLociCancelation}.
\end{lem}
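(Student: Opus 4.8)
The plan is to reduce the statement to a single observation: on any canonical generator, traversing a cut locus interchanges the two idempotent colours $\mathbf{r}$ and $\mathbf{g}$. Recall from the construction of the differential that crossing a cut locus is recorded by the bar operation, and from Lemma \ref{lem:redgreenproperties}(5) that $\overline{\mathbf{r}} = \mathbf{g}$ and $\overline{\mathbf{g}} = \mathbf{r}$. Hence along a strand the colour of the arcs must switch precisely at the cut loci, and since the bar operation is an involution (the double bar operation $a \mapsto \overline{\overline{a}}$ is $\mathrm{id}_V$), two cut loci with no classical crossing and no further cut locus between them necessarily carry the \emph{same} colour on the two outer arcs and the opposite colour on the arc in between. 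This already yields the colour bookkeeping asserted in Figure \ref{fig:CutLociCancelation}; what remains is to see that passing between the two local pictures is an equivalence of Khovanov--Lee complexes under which canonical generators correspond.

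For the latter I would argue on the ALD with cross cuts. A cut locus is represented there by a cross cut, i.e. by inserting a half-twisted (Möbius) band, as in Figure \ref{fig:ALDCutLociColors}; two adjacent cross cuts thus amount to a band carrying two half-twists, and such a band is orientable and hence homeomorphic, as an abstract surface, to the untwisted band. So deleting (or inserting) a pair of adjacent cross cuts is realized by a homeomorphism of the underlying abstract surface --- the analogue for cross cuts of the twisted-Reidemeister moves of Bourgoin \cite{BourgoinTwistedLinks} --- and in particular is local, touching no classical crossing. It therefore induces the identity on the underlying cube of smoothings and, after the reduction of Theorem \ref{thm:GeneratorsforLee}, a homotopy equivalence of the two Khovanov--Lee complexes. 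Because the move does not change the set of orientations of $K$, the bijection of Theorem \ref{thm:AltColoredSmoothingsforLee} between canonical generators and orientations is respected: the generator attached to a given orientation before the move is sent to the generator attached to the same orientation after it, with exactly the colour assignment dictated by the first paragraph --- outer arcs keeping the colour determined by that orientation and the middle arc carrying its conjugate.

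Alternatively, and without invoking the twisted-link calculus, the statement can be checked directly at the chain level: the map that deletes a pair of adjacent cut loci acts on the affected tensor factor of each enhanced state by the double bar operation, hence is literally $\mathrm{id}_V$ there, and is manifestly a chain isomorphism commuting with $m$, $\Delta$ and $\eta$; its inverse reinserts the pair and forces the middle arc to carry the conjugate colour. Either way the essential content is the colour rule of the first paragraph together with the fact that a pair of adjacent cut loci is topologically (and chain-homotopically) invisible. The one point that I regard as the main obstacle is verifying that this local move interacts correctly with the pre- and post-compositions by signs and bar operations coming from the global order and the global orientation on the states involved, so that no stray sign is introduced; this is handled exactly as in the remark following Figure \ref{fig:orientedproblemsquare}, since the extra bar operations created or destroyed by the move always occur in cancelling incoming/outgoing pairs.
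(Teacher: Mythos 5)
Your proof is correct and follows essentially the same line as the paper's: in both, the key observation is that cancelling a pair of adjacent cut loci amounts to applying the bar operation twice, which is the identity on the algebra (so the strand basis is unchanged, only conjugated twice), while on the ALD with cross cuts this corresponds to a local change of checkerboard colouring, possibly after applying the Kamada--Kamada equivalence relation on ALDs. Your chain-level and surface-homeomorphism elaborations spell out what the paper compresses into ``this operation does not change the basis of a strand, rather it is simply conjugation,'' together with the appeal to the uniqueness of the canonical generator for well-definedness.
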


\begin{proof}
Note that this operation does not change the basis of a strand, rather it is simply conjugation. However, if one applies cut loci cancelation to the canonical generator on an ALD with cross cuts, this does effect the checkerboard coloring as in Figure \ref{fig:CutLociCancelationALD}. Furthermore, to apply the cut loci cancelation one may also need to apply an equivalence relation on the ALD as described by Kamada and Kamada \cite{KamadaALD}. Note that this is well-defined since the canonical generator is unique for the knot diagram.
\end{proof}

\begin{figure}[h!]
\centering
    \includegraphics[width=.5\textwidth]{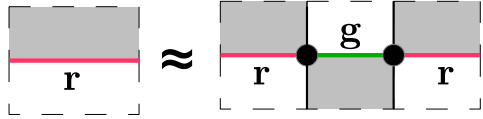}
    \caption{Cut loci cancelation on ALD}
\label{fig:CutLociCancelationALD}
\end{figure}

For example see Figure \ref{fig:ALDCutLociCancelExample}.

\begin{figure}[h!]
\centering
    \includegraphics[width=.95\textwidth]{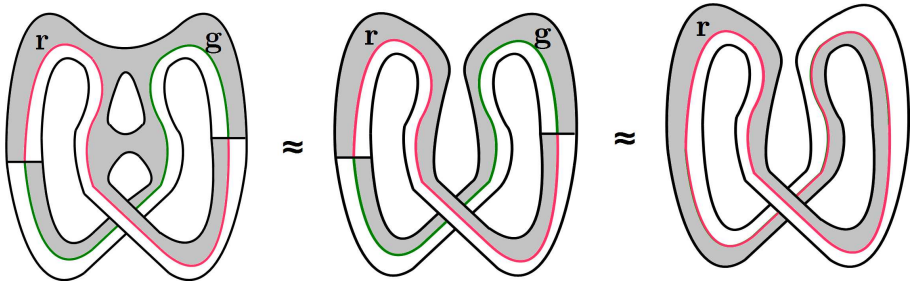}
    \caption{Equivalence for canonical generator of the unknot}
\label{fig:ALDCutLociCancelExample}
\end{figure}

\begin{rem}
While the ALDs with cross cuts are essential in determining the canonical generators we omit most reference to them in the following proofs in order to simplify the exposition. The careful reader will note that they play an important role in the equivalence of the Reidemeister III move as presented below.
\end{rem}

\begin{rem}
We will say that \emph{(canonical) generators are isomorphic (resp. homomorphic)} if there is an isomorphism (resp. homomorphism) of the containing algebra that carries generators to generators.
\end{rem}

\begin{prop}[Generalization of Rasmussen \cite{Rasmussen} Prop 2.3 / See also Caprau \cite{CaprauFiltration} Prop 2]\label{prop:RasCobRM}
Let $\mathfrak{C}$ be an elementary link cobordism from $D$ to $D'$  where $D$ to $D'$ are related by a Reidemeister move. Then $\mathfrak{C}$ induces an isomorphism of the canonical generators of $\widehat{Kh}(D)$
and $\widehat{Kh}(D')$.
\end{prop}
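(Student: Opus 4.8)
The plan is to verify, move by move, that the elementary cobordism realizing each Reidemeister move carries the canonical Khovanov--Lee generator (the alternately colored smoothing determined by a fixed orientation) to the corresponding canonical generator on the other side, and that this assignment is an isomorphism of the ambient Frobenius algebra. Since by Theorem~\ref{thm:GeneratorsforLee} the Khovanov--Lee complex of any diagram is homotopy equivalent to a complex with one generator per alternately colored smoothing and zero differential, and by Theorem~\ref{thm:AltColoredSmoothingsforLee} these are indexed by orientations of $K$, it suffices to check that the chain map induced by the cobordism respects this indexing (it fixes an orientation) and is nonzero on generators. Over $\mathcal{F}_3$ with $t=1$ the generators are, up to scalar, tensor products of the idempotents $\mathbf{r}=(1+X)/2$ and $\mathbf{g}=(1-X)/2$, so ``isomorphism of the containing algebra carrying generators to generators'' will in each case be tracked by following how $\mathbf{r}$ and $\mathbf{g}$ propagate through births, deaths and saddles, using Lemma~\ref{lem:redgreenproperties}.

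First I would set up the cobordism for each of the three Reidemeister moves as a composite of the elementary pieces (one birth and one saddle for RI; two saddles, or a birth and two saddles, for RII; and the standard sequence for RIII), exactly as in Rasmussen \cite{Rasmussen} Prop.~2.3 and Caprau \cite{CaprauFiltration} Prop.~2. For RI one checks that the canonical generator of the one-crossing diagram, restricted to the affected region, is either $\mathbf{r}$ or $\mathbf{g}$ on the relevant arc, and that $\iota$ followed by the saddle map $m$ (respectively $\Delta$) sends it to the corresponding colored generator on the simplified arc; here Lemma~\ref{lem:redgreenproperties}(4),(6), i.e. $X\cdot\mathbf{r}=\mathbf{r}$ and $\Delta(\mathbf{r})=2\mathbf{r}\otimes\mathbf{r}$, do all the work and show the composite is multiplication by a nonzero scalar ($\pm1$ or $\pm2$) on generators, hence an isomorphism. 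For RII I would similarly reduce to a local computation: the two-crossing tangle has its canonical generator determined by the orientation, and the birth-saddle-saddle composite contracts the two new circles, giving again a nonzero scalar on the colored generator. For RIII the point is that both sides have the same alternately colored smoothing away from the triangle, and inside the triangle one uses the fact noted in Theorem~\ref{thm:AltColoredSmoothingsforLee} that colored smoothings correspond to orientations, which are visibly preserved by the RIII cobordism; the ALD-with-cross-cuts bookkeeping (flagged in the remark preceding this proposition) is needed to see that the cut loci on the two sides match up after applying the Kamada--Kamada equivalence moves of \cite{KamadaALD}.

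The new ingredient beyond the classical argument, and the step I expect to be the main obstacle, is keeping track of the bar operation and the cut loci under each move: a Reidemeister move can change the number of cut loci (as already observed in Section~\ref{sec:KhovanovforVirtuals}), so one must check that the induced sign from transporting colored elements past newly created or destroyed cut loci is exactly absorbed by Lemma~\ref{lem:redgreenproperties}(5), $\overline{\mathbf{r}}=\mathbf{g}$ and $\overline{\mathbf{g}}=\mathbf{r}$, together with Lemma~\ref{lem:cutlocicancel}, which says adjacent cut loci on a canonical generator cancel and the arc takes the color of the outer arcs. Concretely, I would argue that after applying cut-loci cancellation the canonical generator on each side has a normal form with no cut loci on the arcs involved in the move, reducing every case to the classical local computation of Rasmussen and Caprau; invariance then follows, and the filtered-degree (hence $s$-invariant) consequences are deferred to the subsequent propositions. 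The remaining cases (different crossing signs, mirror versions of each move) are handled by the symmetry already exploited in the proof of Lemma~\ref{lem:anticommutativity}, namely reversing horizontal arrows and using $90\degree$ rotations, so I would not write them out in full.
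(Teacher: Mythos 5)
Your proposal is essentially the paper's argument: the same reduction to the canonical-generator model via Theorems~\ref{thm:GeneratorsforLee} and~\ref{thm:AltColoredSmoothingsforLee}, the same move-by-move verification using Lemma~\ref{lem:redgreenproperties} to track the propagation of $\mathbf{r}$ and $\mathbf{g}$, and the same appeal to Lemma~\ref{lem:cutlocicancel} to absorb the new cut loci created by the move. Two points where the paper is more specific and you should be too if you write this out in full: (i) the paper deliberately works with Polyak's minimal generating set of oriented Reidemeister moves so that the cut-loci bookkeeping outside the local picture is controlled and the casework is small (your ``remaining cases handled by symmetry'' remark would have to be checked against a specific generating set); and (ii) for RIII the paper does not first normalize the cut loci away and then run the classical computation --- instead it writes the chain map directly as $g_0 = f_0 = Id\otimes\overline{Id}\otimes Id$, with the bar on the middle strand recording the orientation flip, and \emph{then} invokes Lemma~\ref{lem:cutlocicancel} to identify the resulting generator with the canonical one on the other side. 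Your normal-form-first strategy should give the same outcome, but it quietly requires that cut-loci cancellation be compatible with the elementary cobordism before and after it is applied, which the paper's order of operations avoids having to address.
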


\begin{proof}

As the canonical source-sink orientations are dependant on the orientation of the crossings we must show invariance under the oriented Reidemeister moves. In order to minimize the number of changes in cut loci in the diagrams outside of where the Reidemeister move occurs we chose to work with a set of oriented moves shown by Polyak \cite{PolyakMinimal}, Theorem 1.2, to be minimal.

In the following one must take care when reading orientations. The orientations of the knot or link diagram presented in the Reidemeister moves induce source-sink orientations on the states corresponding to the canonical generators. These orientations and cut loci on an ALD, as defined in Section \ref{sec:LeeALDColoring}, are the orientations appearing in the complexes. As noted above we have suppressed the ALD inside of the complex to simplify the diagrams.

\noindent{Reidemeister I}:

First consider the complexes arising from the Reidemeister move\\ \RIb. Let $D_0$ and $D_1$ represent the left- and right-hand sides of the diagram respectively. Then the associated canonical generators are given by $ \llbracket D_0 \rrbracket  = ( 0 \rightarrow \RIbLeft \{1\} \rightarrow 0)$ and $\llbracket D_1 \rrbracket = ( 0 \rightarrow \RIbRight \rightarrow 0)$ where both canonical generators sit at homological height zero. The chain homotopies  $g: D_0 \rightarrow D_1$ and $f: D_1 \rightarrow D_0$ are given by $g_0 = Id \otimes \epsilon$ where $\epsilon$ occurs on the closed cycle and $f_0 = Id \otimes X \cdot \iota + X \cdot Id \otimes \iota$. Note that by Lemma \ref{lem:redgreenproperties} the $Id$ map preserves the color of the strand and furthermore the multiplication by $X$ in the second summand of $f_0$ does not effect the color of the strand.

The maps for \RId are similar.

\medskip

\noindent{Reidemeister II}:

Next consider the complexes arising from the Reidemeister move\\ \RIIc. As before let $D_0$ and $D_1$ represent the left- and right-hand sides of the diagram respectively. Then the associated canonical generators are given by $ \llbracket D_0 \rrbracket  = ( 0 \rightarrow \RIIcLeft \rightarrow 0)$ and $\llbracket D_1 \rrbracket = ( 0 \rightarrow \RIIcRight \rightarrow 0)$ where both canonical generators sit at homological height zero. Note that by Theorem \ref{thm:GeneratorsforLee} the the strands in the state  $\RIIcLeft$ of $\llbracket D_0 \rrbracket$ must alternate color. Similarly the strands in $\RIIcRight$ of $\llbracket D_1 \rrbracket$ must be of the same color (due to the equivalence of ALDs \cite{KamadaALD} Corollary 4.4). The chain homotopies  $g: D_0 \rightarrow D_1$ and $f: D_1 \rightarrow D_0$ can be described as $g_0 = Saddle \otimes \epsilon$, where $\epsilon$ occurs on the closed cycle and $Saddle$ represents a saddle-morphism (either $m$, $\Delta$ or $\mu$ depending on the connectivity of the endpoints outside of the local diagram) and is non-zero by Lemma \ref{lem:redgreenproperties} on the strands of the same color. Similarly the map $f_0 = Saddle \otimes \iota$ where $\iota$ occurs at the closed cycle.

The maps for \RIId are similar.

\medskip

\noindent{Reidemeister III}:

Finally consider the complexes arising from the Reidemeister move\\ \RIIIb. As before let $D_0$ and $D_1$ represent the left- and right-hand sides of the diagram respectively. Then the associated canonical generators are given by $ \llbracket D_0 \rrbracket  = ( 0 \rightarrow \RIIIbLeft  \rightarrow 0)$ and $\llbracket D_1 \rrbracket = ( 0 \rightarrow \RIIIbRight \rightarrow 0)$ where both canonical generators sit at homological height zero.  Before we give the maps, we first point out the overall change in orientation in the middle strand between $\RIIIbLeft$ and $\RIIIbRight$. This change in orientation implies that the diagrams for the canonical generators differ by the addition (or removal) of cut loci immediately above and below the middle strand since the source-sink orientations are fixed outside of the local picture. (This is an equivalent situation to what happens when one switches the source-sink orientation at a single crossing in a virtual knot diagram.) Hence $g: D_0 \rightarrow D_1$ and $f: D_1 \rightarrow D_0$ can be described as $g_0 =f_0 = Id \otimes \overline{Id} \otimes Id$ where by Lemma \ref{lem:redgreenproperties} the $Id$ preserves and $\overline{Id}$ reverses the color of the strand. The resulting canonical generators are equivalent by Lemma \ref{lem:cutlocicancel}.
\end{proof}

\begin{prop}[Generalization of Rasmussen \cite{Rasmussen} Prop 4.1 / See also Caprau \cite{CaprauFiltration} Prop 3]\label{prop:RasCobBDS}
Let $\mathfrak{C}$ be an elementary link cobordism from $K$ to $K'$ where $K$ and $K'$ are related by a birth, death or saddle, then $\mathfrak{C}$ induces a homomorphism of the canonical generators of $\widehat{Kh}(K)$ and $\widehat{Kh}(K')$ satisfying:
\begin{enumerate}
  \item If $\mathfrak{C}$ corresponds to a birth, then any canonical generator of $\widehat{Kh}(K)$ induces two canonical generators of $\widehat{Kh}(K')$ which agree on all components of $\widehat{Kh}(K)$ other than the new one and take the two possible values $\mathbf{r}$ and $\mathbf{g}$ on the new cycle.
  \item If $\mathfrak{C}$ corresponds to a death then any canonical generator of $\widehat{Kh}(K)$ induces a canonical generator of $\widehat{Kh}(K')$ that agrees on all remaining cycles of the canonical generator of $\widehat{Kh}(K)$.
  \item If $\mathfrak{C}$ corresponds to a saddle then there are two situations to consider.
  \begin{enumerate}
    \item If the two strands of the canonical generator of $\widehat{Kh}(K)$ involved have different values (i.e. one labeled $\mathbf{r}$ and the other $\mathbf{g}$) then the induced map is zero.
    \item If the two strands of the canonical generator of $\widehat{Kh}(K)$ involved have the same values (i.e. either both labeled $\mathbf{r}$ or both labeled $\mathbf{g}$) then the induced map takes the common value on affected component(s) of the canonical generator of $\widehat{Kh}(K')$.
  \end{enumerate}
\end{enumerate}
\end{prop}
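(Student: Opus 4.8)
The plan is to compute the three elementary cobordism maps — the unit $\iota$ for a birth, the counit $\epsilon$ for a death, and the saddle map ($m$ or $\Delta$) for a saddle — directly on the idempotents $\mathbf{r} = \tfrac{1+X}{2}$ and $\mathbf{g} = \tfrac{1-X}{2}$, using only the identities of Lemma \ref{lem:redgreenproperties} together with the description of canonical generators in Theorem \ref{thm:GeneratorsforLee} and Theorem \ref{thm:AltColoredSmoothingsforLee}. Since a canonical generator is a tensor of $\mathbf{r}$'s and $\mathbf{g}$'s over the cycles of an alternately coloured smoothing on an ALD with cross cuts, it suffices to see what each local TQFT map does to the colour(s) of the strand(s) it touches, and then to check that the output is again (a linear combination of) such tensors on the ALD of the target diagram.

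For a birth the induced chain map is $\mathrm{Id}\otimes\iota$ with $\iota(1)=1$ on the new isolated cycle; since $1=\mathbf{r}+\mathbf{g}$ by Lemma \ref{lem:redgreenproperties}(2), a canonical generator is sent to the sum of the two generators obtained by colouring the new cycle $\mathbf{r}$ and $\mathbf{g}$ respectively, all other cycles unchanged — this is part (1). For a death the induced map is $\mathrm{Id}\otimes\epsilon$ on the cycle being removed; from $\epsilon(1)=0$ and $\epsilon(X)=1$ we get $\epsilon(\mathbf{r})=\tfrac12$ and $\epsilon(\mathbf{g})=-\tfrac12$, both nonzero, so a canonical generator maps to a nonzero scalar multiple of the canonical generator on the smaller diagram that agrees with it on all remaining cycles — this is part (2). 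The scalars $\pm\tfrac12$, and the scalar $2$ appearing below, are units in $\mathbb{Q}$ and are precisely the reason this construction is only functorial up to multiplication by an element of $\mathbb{Q}^{\times}$, as discussed earlier.

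For a saddle I would first observe that an \emph{oriented} cobordism saddle is always either a merge of two cycles or a split of one cycle into two — it can never be the single-cycle smoothing, because reconnecting two oppositely oriented arcs in the oriented fashion cannot carry one cycle to one cycle — so the relevant local map is $m$ or $\Delta$, never $\eta$. If the saddle merges two cycles, then by Lemma \ref{lem:redgreenproperties}(1),(3) we have $m(\mathbf{r}\otimes\mathbf{g})=m(\mathbf{g}\otimes\mathbf{r})=0$ while $m(\mathbf{r}\otimes\mathbf{r})=\mathbf{r}$ and $m(\mathbf{g}\otimes\mathbf{g})=\mathbf{g}$; this is exactly case (3a) and the merging half of (3b). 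If the saddle splits one cycle of colour $c\in\{\mathbf{r},\mathbf{g}\}$ into two, then by Lemma \ref{lem:redgreenproperties}(6) we have $\Delta(c)=2\,c\otimes c$, so both new cycles inherit the colour $c$ (up to the unit $2$), which is the splitting half of (3b); in this direction the ``two strands involved'' of the source generator is the single strand being cut, so no ``different colours'' case can occur.

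Finally I would verify that in each case the output is a genuine canonical generator in the sense of Theorem \ref{thm:GeneratorsforLee}, i.e. that the resulting colouring is again alternately coloured on the ALD with cross cuts of the target diagram: near a birth or a saddle the local picture contains no classical crossings, so the alternation condition there is vacuous, and away from the local picture the colouring is unchanged; any repositioning of cut loci or of the ALD needed to put the result in standard form is supplied by Lemma \ref{lem:cutlocicancel} (and the Kamada--Kamada ALD equivalences). I expect this ALD and cut-locus bookkeeping — rather than the algebra, which is a two-line computation with Lemma \ref{lem:redgreenproperties} — to be the only real obstacle, exactly as in the Reidemeister-move case handled in Proposition \ref{prop:RasCobRM}.
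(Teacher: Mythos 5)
Your proof is correct and takes essentially the same approach as the paper's: both establish the claim by evaluating the elementary cobordism maps $\iota$, $\epsilon$, $m$, $\Delta$ of $\mathcal{F}_3$ directly on the idempotents $\mathbf{r}$, $\mathbf{g}$ via Lemma~\ref{lem:redgreenproperties}, with the paper phrasing the birth case in terms of the two checkerboard colourings of the new annular ALD component (equivalent to your $\iota(1)=\mathbf{r}+\mathbf{g}$) and deferring the death and saddle cases to the definition of $\epsilon$ and to Lemma~\ref{lem:redgreenproperties} exactly as you do. Your explicit observation that an \emph{oriented} cobordism saddle is always a merge or a split and can never realize the single-cycle smoothing $\eta$ is a useful clarification that the paper's one-sentence treatment of the saddle case leaves implicit.
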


\begin{proof}
For the first statement, note that we do not require the ALD for disjoint links to be connected. The ALD corresponding to a birth is an annulus with the new cycle occurring as a central circle. This has two possible checkerboard colorings and so the claim follows. The second statement follows from the definition of $\epsilon$ in $\mathcal{F}_3$. Finally, the third statement is an immediate consequence of Lemma \ref{lem:redgreenproperties}. Hence we have an explicit homomorphism between the canonical generators or $\widehat{Kh}(K)$ and $\widehat{Kh}(K')$ which, up to multiplication by an element in the base field, sends canonical generators to canonical generators.
\end{proof}

\begin{cor}[Generalization of Rasmussen \cite{Rasmussen} Cor 4.2 / See also Caprau \cite{CaprauFiltration} Cor 2]\label{cor:RasCobIso}
Let $\mathfrak{C}$ be a connected virtual knot cobordism from $K$ to $K'$, then $\mathfrak{C}$ induces an isomorphism of the canonical generators of $\widehat{Kh}(K)$ and $\widehat{Kh}(K')$.
\end{cor}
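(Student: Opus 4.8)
The plan is to decompose $\mathfrak{C}$ into elementary pieces, first reducing to the case in which those pieces are virtual isotopies and oriented saddles only, and then to track the distinguished canonical generator determined by an orientation of the cobordism surface. I would put $\mathfrak{C}$ in Morse position and observe that, since $\mathfrak{C}$ is a connected surface whose two boundary components $K$ and $K'$ are each nonempty, one may cancel all index-$0$ critical points (births) against index-$1$ critical points and all index-$2$ critical points (deaths) against index-$1$ critical points; this is the standard handle-cancellation argument for a connected cobordism with nonempty ends, realized on the diagrammatic side by the usual movie moves, after which no births or deaths remain. Thus we may assume $\mathfrak{C}$ is presented as a finite composition of traces of Reidemeister moves and oriented saddles, inducing a movie $K = D_0, D_1, \ldots, D_n = K'$ of (virtual link) diagrams.

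Next, since every saddle occurring is an \emph{oriented} saddle and the pieces are glued compatibly with orientations, the schema of $\mathfrak{C}$ is an orientable surface; I would fix one of its two orientations, say the one restricting to a chosen orientation $o$ of $K$, and let $o'$ be the orientation it induces on $K'$. Via the bijection of Theorem \ref{thm:AltColoredSmoothingsforLee} between orientations and alternately coloured smoothings, the schema orientation singles out, at each stage of the movie, a distinguished canonical generator $\mathfrak{s}_i$ of $\widehat{Kh}(D_i)$, with $\mathfrak{s}_0 = \mathfrak{s}_o(K)$ and $\mathfrak{s}_n = \mathfrak{s}_{o'}(K')$. I would then verify step by step that the $i$-th elementary cobordism carries $\mathfrak{s}_{i-1}$ to a nonzero scalar multiple of $\mathfrak{s}_i$, with no other terms: for a Reidemeister move this is Proposition \ref{prop:RasCobRM}; for an oriented saddle the two affected strands of $\mathfrak{s}_{i-1}$ inherit compatible orientations from the schema, hence the common colour $\mathbf{r}$ or $\mathbf{g}$, so Proposition \ref{prop:RasCobBDS}(3) places us in case 3(b), and since $m(\mathbf{r}\otimes\mathbf{r}) = \mathbf{r}$, $m(\mathbf{g}\otimes\mathbf{g}) = \mathbf{g}$, $\Delta(\mathbf{r}) = 2\,\mathbf{r}\otimes\mathbf{r}$, $\Delta(\mathbf{g}) = 2\,\mathbf{g}\otimes\mathbf{g}$ by Lemma \ref{lem:redgreenproperties}, the image is a single nonzero multiple of the coloured smoothing corresponding to the pushed-forward orientation, i.e.\ of $\mathfrak{s}_i$. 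Composing, $\phi_{\mathfrak{C}}(\mathfrak{s}_o(K)) = c\,\mathfrak{s}_{o'}(K')$ with $c \neq 0$. Running the same argument with the opposite orientation $-o$ of $K$ (equivalently, the opposite orientation of the schema) gives $\phi_{\mathfrak{C}}(\mathfrak{s}_{-o}(K)) = c'\,\mathfrak{s}_{-o'}(K')$ with $c' \neq 0$; since $\{\mathfrak{s}_o(K), \mathfrak{s}_{-o}(K)\}$ and $\{\mathfrak{s}_{o'}(K'), \mathfrak{s}_{-o'}(K')\}$ are precisely the canonical generators of the two knots, $\phi_{\mathfrak{C}}$ restricts to an isomorphism of the canonical generators.

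The step I expect to be the main obstacle is the reduction to saddles and Reidemeister moves: one must check that the handle cancellations can be carried out entirely within the setting of virtual knot cobordisms and in a way compatible with the canonical source--sink orientations and cut loci used to define the $D_i$ --- that is, that the ALD-with-cross-cuts data of Section \ref{sec:LeeALDColoring} behaves well under these cancellations, which is the same kind of bookkeeping handled in the Reidemeister III case of Proposition \ref{prop:RasCobRM} via Lemma \ref{lem:cutlocicancel} and the ALD equivalences of \cite{KamadaALD}. An alternative that avoids the Morse-theoretic reduction is to retain births and deaths and instead argue that a term incompatible with the schema orientation, once created at a birth, can never be transported back onto the distinguished generator; this works but uses the connectedness hypothesis in a less transparent way and is more delicate to phrase, so I would prefer the handle-cancellation route.
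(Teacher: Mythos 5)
Your proof takes a genuinely different route from the paper's. The paper's argument is terse: it decomposes $\mathfrak{C}$ arbitrarily into elementary pieces \emph{including} births and deaths, applies Propositions~\ref{prop:RasCobRM} and~\ref{prop:RasCobBDS} step by step, and then relies on the ``compatibility conditions'' of Lemma~\ref{lem:redgreenproperties} --- chiefly $\mathbf{r}\cdot\mathbf{g}=0$ --- together with connectedness to argue that the extraneous term created at each birth is killed by the time the new cycle is forced (by connectedness) to merge with the rest of the surface via a saddle. That is, the paper follows exactly the ``alternative that avoids the Morse-theoretic reduction'' you sketch and set aside at the end. Your chosen route instead front-loads a Morse-theoretic handle cancellation to remove all births and deaths before tracking generators, after which every elementary piece is a Reidemeister move or an oriented saddle and each step is one-to-one on the distinguished coloured smoothing. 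The trade-off is real: your version makes the step-by-step bookkeeping cleaner (no spurious terms ever arise), but it introduces two dependencies the paper's route avoids. First, you must realize the birth--saddle and death--saddle cancellations by movie moves \emph{within} the virtual cobordism category and check they interact well with the ALD-with-cross-cuts data --- you correctly flag this. Second, and more substantively, once you have replaced the given movie by a cancelled one, concluding anything about the map induced by the \emph{original} $\mathfrak{C}$ requires knowing that isotopic movies induce the same map on $\widehat{Kh}$ up to a nonzero scalar; the paper explicitly declines to establish full functoriality (see the remark at the end of Section~\ref{subsec:LGOrder}), so you would need to invoke at least that weak form of it. The paper's argument, by working with whatever decomposition is handed to it, never needs to compare two different movies for the same cobordism. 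Both routes lead to the result, but your proposal as written should flag the functoriality-up-to-scalar dependency in addition to the diagrammatic handle-cancellation issue you already identified.
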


\begin{proof}
Recall that any connected cobordism can be decomposed as the union of elementary link cobordisms. Applying  Propositions \ref{prop:RasCobRM} and \ref{prop:RasCobBDS} along with the compatibility conditions of Lemma \ref{lem:redgreenproperties} gives the desired result.
\end{proof}

\begin{defn}\label{def:RasInvar}
Let $s_{max}(K)$ and $s_{min}(K)$ be the highest and lowest filtration levels of $\widehat{Kh}(K)$. Then the Rasmussen invariant of $K$ is defined by
\[ \overline{s}(K) = \frac{1}{2}[s_{max}(K)+s_{min}(K)]. \]
\end{defn}

 The following proposition is a collection of properties noted and proved by Rasmussen (\cite{Rasmussen} Propositions 3.3, 3.4 and 3.9).

\begin{prop}[Properties of the Rasmussen invariant]\label{prop:PropRasInvar}
Let K be a virtual knot and $s_{min}(K)$, $s_{max}(K)$ and $\overline{s}(K)$ be defined as in Definition \ref{def:RasInvar}. Furthermore let $\overline{K}$ be the mirror image of $K$ defined by changing the signs of all crossings. Then the following properties hold:
\begin{enumerate}
\item $s_{max}(K) = s_{min}(K) + 2$\label{RasInvarProp1}
\item $\overline{s}(K) =  s_{min}(K) + 1$\label{RasInvarProp2}
\item $s_{max}(\overline{K}) = -s_{min}(K)$\label{RasInvarProp3}
\item $s_{min}(\overline{K}) = -s_{max}(K)$\label{RasInvarProp4}
\item $\overline{s}(\overline{K}) = -\overline{s}(K)$\label{RasInvarProp5}
\end{enumerate}
\end{prop}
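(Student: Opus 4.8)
Since a knot is a single--component link, apply Theorem~\ref{thm:AltColoredSmoothingsforLee} with $c=1$: there are exactly two alternately coloured smoothings of a diagram of $K$, one for the given orientation $o$ and one for the reversed orientation $\bar o$, and by Theorem~\ref{thm:GeneratorsforLee} the associated canonical generators $\mathfrak{s}_o,\mathfrak{s}_{\bar o}$ form a $\mathbb{Q}$--basis of $\widehat{Kh}(K)$. By Lemma~\ref{lem:redgreenproperties}(5) reversing the orientation interchanges $\mathbf{r}\leftrightarrow\mathbf{g}$ on every cycle, so $\mathfrak{s}_{\bar o}=\overline{\mathfrak{s}_o}$; in particular the two generators involve the same underlying enhanced states (only the $\mathbf{r}/\mathbf{g}$ labels, hence signs, change), so they sit at the same filtration level. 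Then $[\mathfrak{s}_o+\mathfrak{s}_{\bar o}]$ and $[\mathfrak{s}_o-\mathfrak{s}_{\bar o}]$ also form a basis of $\widehat{Kh}(K)$; relabelling them $\mathfrak{z}_{\mathrm{lo}},\mathfrak{z}_{\mathrm{hi}}$ so the first has the smaller filtration level, these realise $s_{min}(K)$ and $s_{max}(K)$ of Definition~\ref{def:RasInvar} (any nonzero class is a combination of the two, and its filtration level is the larger of the levels of its nonzero parts). That $s_{min},s_{max}$, and hence $\overline{s}$, are knot invariants is the Reidemeister special case of Corollary~\ref{cor:RasCobIso} together with the fact that the equivalences realising it are filtered maps.

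\textbf{Property 1 (the main obstacle).} I must show $s_{max}=s_{min}+2$. Equip $\widehat{Kh}(K)$ with the basepoint action of $\widehat{Kh}(U)\cong\mathbb{Q}[X]/(X^2-1)$; multiplication by $X$ is an isomorphism (because $X^2=1$), and since $m$ is a filtered map of degree $-1$ (the computation from $\mathcal{F}_3$ recalled in \S\ref{subsec:UniversalAlg}) while $X$ carries quantum degree $-1$, one gets $s(X\cdot v)\ge s(v)-2$ for all $v$, and applying this to $X^{-1}=X\cdot$ gives $|s(X\cdot v)-s(v)|\le 2$. Using Lemma~\ref{lem:redgreenproperties}(4) ($X\cdot\mathbf{r}=\mathbf{r}$, $X\cdot\mathbf{g}=-\mathbf{g}$) one checks that on the basepoint circle $X$ acts as $+1$ on $\mathfrak{s}_o$ and $-1$ on $\mathfrak{s}_{\bar o}$, so $X\cdot$ interchanges $\mathfrak{z}_{\mathrm{lo}}$ and $\mathfrak{z}_{\mathrm{hi}}$; hence $|s_{max}-s_{min}|\le 2$, giving $s_{max}\le s_{min}+2$. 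For the reverse inequality $s_{max}\ge s_{min}+2$ I would reproduce Rasmussen's leading--order argument (\cite{Rasmussen}, Prop.~3.3): since $\mathbf{r}+\overline{\mathbf{r}}=1$ and $\mathbf{r}-\overline{\mathbf{r}}=X$, the class $[\mathfrak{s}_o+\mathfrak{s}_{\bar o}]$ has as its top--filtration term the oriented resolution with every circle labelled $1$, while $[\mathfrak{s}_o-\mathfrak{s}_{\bar o}]$ has a top term obtained from it by replacing a $1$ on a basepoint circle by an $X$, which lowers the quantum grading by exactly $2$; the delicate point — verifying that these top terms are nonzero in homology and are not cancelled by lower--order contributions — is exactly the content of Rasmussen's proposition and carries over verbatim, because the cut--locus and ALD data are invisible to this basepoint computation. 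A parity observation (all of $\widehat{Kh}(K)$ lies in quantum gradings of one fixed parity after the $n_{\pm}$ shifts) could also be invoked to reduce the gap a priori to $\{0,2\}$, after which only the non--vanishing of the top term is needed.

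\textbf{Properties 3 and 4 (mirror image).} Reversing every crossing sign sends the Khovanov--Lee cube complex of $K$ to the dual of that of $\overline{K}$, i.e.\ $\widehat{\mathcal{C}}(\overline{K})\cong\widehat{\mathcal{C}}(K)^{\vee}$, the identification made via the nondegenerate Frobenius pairing $\langle a|b\rangle=\epsilon(ab)$ on $\mathcal{A}_3$; this pairing interchanges the basis vectors $1$ and $X$ and therefore negates the quantum (filtration) grading, and it carries alternately coloured smoothings to alternately coloured smoothings, hence canonical generators to canonical generators. It follows that $s_{max}(\overline{K})=-s_{min}(K)$ and $s_{min}(\overline{K})=-s_{max}(K)$. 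This is the virtual analogue of \cite{Rasmussen}, Prop.~3.4, and since the duality is purely local nothing new is required.

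\textbf{Properties 2 and 5.} These are now formal consequences: by Property~1,
\[ \overline{s}(K)=\tfrac12\big(s_{max}(K)+s_{min}(K)\big)=\tfrac12\big((s_{min}(K)+2)+s_{min}(K)\big)=s_{min}(K)+1, \]
and by Properties~3 and 4,
\[ \overline{s}(\overline{K})=\tfrac12\big(s_{max}(\overline{K})+s_{min}(\overline{K})\big)=\tfrac12\big(-s_{min}(K)-s_{max}(K)\big)=-\overline{s}(K). \]
The only genuinely non-formal input is the gap statement of Property~1; everything else is bookkeeping with the canonical generators and the duality of the Frobenius pairing.
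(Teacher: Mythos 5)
Your proposal takes essentially the same route as the paper: the gap $s_{max}(K)=s_{min}(K)+2$ is deferred to Rasmussen's classical argument carried over to the virtual setting, Properties 3 and 4 follow from the observation that mirror image induces a filtered isomorphism negating the filtration grading, and Properties 2 and 5 are formal consequences of the others. You fill in more detail than the paper (which simply cites Rasmussen for Property 1 and states the mirror-image observation without explicitly realizing it via the Frobenius pairing), but the structure and the essential non-formal inputs are identical.
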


\begin{proof}
We omit the proof of \hyperref[RasInvarProp2]{\ref*{prop:PropRasInvar}.\ref*{RasInvarProp1}} as the homological algebra in the virtual knot case follows nearly identically to that of the classical knot case as given in Rasmussen. The only change being we must consider orientations on the knot diagrams. \hyperref[RasInvarProp2]{\ref*{prop:PropRasInvar}.\ref*{RasInvarProp3}} and \hyperref[RasInvarProp2]{\ref*{prop:PropRasInvar}.\ref*{RasInvarProp4}} follow from the observation that taking the mirror image $\overline{K}$ of a knot $K$ produces an isomorphism of filtered complex where the elements of filtered degree $C_i$ in the complex for $K$ correspond to elements of filtered degree $C_{-i}$ in the complex for $\overline{K}$.  For knots, Proposition \hyperref[RasInvarProp2]{\ref*{prop:PropRasInvar}.\ref*{RasInvarProp2}} is an immediate consequence of Proposition \hyperref[RasInvarProp2]{\ref*{prop:PropRasInvar}.\ref*{RasInvarProp1}} and Definition \ref{def:RasInvar}. Similarly, \hyperref[RasInvarProp2]{\ref*{prop:PropRasInvar}.\ref*{RasInvarProp5}} follows from \hyperref[RasInvarProp2]{\ref*{prop:PropRasInvar}.\ref*{RasInvarProp3}} and \hyperref[RasInvarProp2]{\ref*{prop:PropRasInvar}.\ref*{RasInvarProp4}}.
\end{proof}

\begin{thm}[Generalization of Rasmussen \cite{Rasmussen} Theorem 1]\label{thm:Ras1}
Let $g_s(K)$ denote the slice genus of the virtual knot K. Then $| \overline{s}(K) | \leq 2g_s(K)$.
\end{thm}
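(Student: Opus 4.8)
The plan is to adapt Rasmussen's original argument \cite{Rasmussen}, feeding in the cobordism maps of Proposition \ref{prop:RasCobRM}, Proposition \ref{prop:RasCobBDS} and Corollary \ref{cor:RasCobIso} together with the filtered-degree data already recorded for $\mathcal{F}_3$.

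First I would assemble the elementary inputs. By Proposition \ref{prop:PropRasInvar} we have $s_{\max}(K) = s_{\min}(K) + 2$, hence $\overline{s}(K) = s_{\max}(K) - 1 = s_{\min}(K) + 1$, so it is enough to bound $s_{\max}(K)$ from above and from below. For the unknot $U$, evaluating $\widehat{Kh}$ on the zero-crossing diagram --- that is, the Lee algebra $\mathbb{Q}[X]/(X^2 - 1)$ with $\deg(1) = 1$, $\deg(X) = -1$ and vanishing differential --- gives $s_{\max}(U) = 1$ and $s_{\min}(U) = -1$.

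Next, put $g = g_s(K)$ and realize it by a \emph{connected} genus-$g$ cobordism $\mathfrak{C}$ from $K$ to $U$: start from a connected minimal-genus virtual surface in the four-ball bounding $K$ (as in the construction preceding Lemma \ref{lem:SeifertGenus}; any closed components may be discarded without increasing the genus), delete a small open disc, and read the resulting surface as a cobordism from $K$ to the boundary circle $U$ of the deleted disc, so that $\chi(\mathfrak{C}) = -2g$. Decomposing $\mathfrak{C}$ into elementary cobordisms --- oriented Reidemeister moves, births, deaths and saddles --- Proposition \ref{prop:RasCobRM} shows that each Reidemeister piece induces a filtration-preserving isomorphism of the canonical generators (once the normalization shift $[-n_-]\{n_+ - 2n_-\}$ is accounted for), while the $\mathcal{F}_3$ computation recorded earlier shows that a birth or a death (the maps $\iota$, $\epsilon$) raises the filtration level by $1$ and a saddle (the maps $m$, $\Delta$) lowers it by $1$. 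Since the number of births plus the number of deaths minus the number of saddles equals $\chi(\mathfrak{C}) = -2g$, the induced map $\phi_{\mathfrak{C}} \colon \widehat{Kh}(K) \to \widehat{Kh}(U)$ satisfies $s(\phi_{\mathfrak{C}}(x)) \geq s(x) - 2g$ for every nonzero class $x$; by Corollary \ref{cor:RasCobIso} it is moreover an isomorphism, and applying the same reasoning to the reversed cobordism $\overline{\mathfrak{C}}$ gives an isomorphism $\phi_{\overline{\mathfrak{C}}} \colon \widehat{Kh}(U) \to \widehat{Kh}(K)$ with $s(\phi_{\overline{\mathfrak{C}}}(y)) \geq s(y) - 2g$.

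The inequality then drops out. Pick a nonzero $x \in \widehat{Kh}(K)$ with $s(x) = s_{\max}(K)$; injectivity gives $\phi_{\mathfrak{C}}(x) \neq 0$, so $s_{\max}(K) - 2g \leq s(\phi_{\mathfrak{C}}(x)) \leq s_{\max}(U) = 1$, whence $\overline{s}(K) = s_{\max}(K) - 1 \leq 2g$. Picking a nonzero $y \in \widehat{Kh}(U)$ with $s(y) = s_{\max}(U) = 1$ gives in the same way $1 - 2g \leq s(\phi_{\overline{\mathfrak{C}}}(y)) \leq s_{\max}(K)$, so $\overline{s}(K) = s_{\max}(K) - 1 \geq -2g$. (Equivalently the lower bound follows by applying the upper bound to the mirror $\overline{K}$, which satisfies $g_s(\overline{K}) = g_s(K)$, and translating back via Proposition \ref{prop:PropRasInvar}.) Therefore $|\overline{s}(K)| \leq 2g = 2 g_s(K)$.

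I expect the one genuinely delicate point to be the filtered-degree bookkeeping for $\phi_{\mathfrak{C}}$: one must check that the grading shifts $[-n_-]\{n_+ - 2n_-\}$ forced by the Reidemeister moves occurring along $\mathfrak{C}$ cancel, so that only the births, deaths and saddles contribute and the net is exactly $\chi(\mathfrak{C}) = -2g$, and that the virtual slice surface really can be arranged to be connected so that Corollary \ref{cor:RasCobIso} applies. Both points are essentially contained in the cited propositions and in the invariance discussion following Theorem \ref{thm:Anticommute}, but each warrants an explicit sentence; everything after that is the short filtration count above.
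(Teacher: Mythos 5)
Your proof is correct and follows essentially the same route as the paper: take a connected genus-$g$ cobordism to the unknot, apply Corollary \ref{cor:RasCobIso} to get an isomorphism carrying canonical generators to canonical generators, observe the net filtered-degree shift is $-2g$ (the paper asserts this directly; you derive it from $\chi(\mathfrak{C}) = -2g$), and combine with $s_{\max}(U) = 1$ and Proposition \ref{prop:PropRasInvar} to conclude, with the lower bound obtained from the mirror (the paper's choice) or equivalently the reversed cobordism (your choice). One small side remark: you correctly identify the degree-$+1$ maps as $\iota$ and $\epsilon$, whereas the paper's proof writes ``$\eta$'' where the earlier discussion of $\mathcal{F}_3$ makes clear ``$\epsilon$'' is intended.
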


\begin{proof}
Following Rasmussen \cite{Rasmussen}, suppose $\mathfrak{C}$ is a connected cobordism between $K$ and the unknot, $U$, of genus $g$ and let $x$ be a maximal non-zero element of $\widehat{Kh}(K)$.  By Corollary \ref{cor:RasCobIso}, $\mathfrak{C}$  induces an isomorphism, $\phi$, between $x$ and the maximal element of $\widehat{Kh}(U)$. Recall that the filtered degrees of $m$ and $\Delta$ in $\mathcal{F}_3$ are $-1$ while $\eta$ and $\iota$ are 1. Hence the induced isomorphism $\phi$ is of filtered degree -2g and $s(\phi(x)) \geq s(x)-2g$. Since $s_{max}(U)=1$ we also have that $s(\phi(x)) \leq 1$. Combining the two inequalities gives that $s(x) \leq 2g + 1$. Thus  by Propositions \hyperref[RasInvarProp2]{\ref*{prop:PropRasInvar}.\ref*{RasInvarProp1}} and  \hyperref[RasInvarProp2]{\ref*{prop:PropRasInvar}.\ref*{RasInvarProp2}}, $s_{max}(K) \leq 2g+1$ and $\overline{s}(K) \leq 2g$. Finally to show that $\overline{s}(K) \geq -2g$ one can repeat the argument with $\overline{K}$ and apply Proposition \hyperref[RasInvarProp2]{\ref*{prop:PropRasInvar}.\ref*{RasInvarProp5}}.
\end{proof}

\begin{rem}
We will apply Theorem \ref{thm:Ras1} to positive virtual knots in Theorem \ref{thm:PositiveGenus}, generalizing Rasmussen's result which gave a combinatorial proof of a conjecture by Milnor regarding the genus of torus knots.
\end{rem}

\subsection{Examples and Calculations}

When calculating the Rasmussen invariant for a virtual knot one only needs to know the resulting quantum grading on the two remaining non-zero copies of $\mathbb{Q}$ in the Khovanov-Lee homology. This follow directly from the definition in the previous subsection. Furthermore, to compute $s(K)$ we will use Proposition \hyperref[RasInvarProp2]{\ref*{prop:PropRasInvar}.\ref*{RasInvarProp2}}.  Here we consider the class of virtual knots coming from closures of virtual braids of the form $v\sigma^{2n}$ where $v, \sigma \in VB_2$. For more on virtual braids see \cite{VKT} \cite{VirtualBraids}.

Knots of this form are given in Figure \ref{fig:2n1vVirtuals}. The two crossing knot, the closure of $v\sigma^{2}$, whose source-sink diagram with cut loci was given in Figure \ref{fig:examplecutloci} and a canonical generator was given in Figures \ref{fig:ExampleCannonicalStates} and \ref{fig:ALDCutLociCancelExample}, is of this form. Furthermore all closures of braids $v\sigma^{2n}$ have one canonical generator generalizing the form given in Figure \ref{fig:ExampleCannonicalStates} and the other is oppositely colored (i.e. swap $red$ and $green$). Both can be simplified to a single cycle labeled $red$ or $green$ via the same process as is given in Figure \ref{fig:ALDCutLociCancelExample}.

\begin{figure}[h!]
\centering
    \includegraphics[height=.6in]{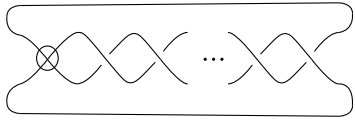}
    \caption{The closure of a 2-braid of the form $v\sigma^{2n}$ }
\label{fig:2n1vVirtuals}
\end{figure}

To calculate $s_{min}(K)$  for knots of the form appearing in Figure \ref{fig:2n1vVirtuals} we can return to the original formula for the quantum grading given for Khovanov homology. Namely for an enhanced state $s$, $q(s) = j(s) + (n_+ - 2n_-)$. Note that we can do so since the canonical generators have only a since cycle.

Let $s_0$ be the canonical state where, instead of decorating by $red$ and $green$, we decorate the single cycle of $s_0$  by $X$. Then $s_{min}(K) = q(s_0)$. Hence if $K$ is of the form given in Figure \ref{fig:2n1vVirtuals} we have $s_{min}(K) = -1 + 2n$ and so by Proposition \hyperref[RasInvarProp2]{\ref*{prop:PropRasInvar}.\ref*{RasInvarProp2}} $\overline{s}(K) =  2n$. Thus no knot of this form is slice and furthermore $n$ is a lower bound on the slice genus.

While we relied on the fact that the canonical generators had only a single cycle we could also note that the previous computation is an example of the case of positive virtual knots. As Rasmussen first noted (\cite{Rasmussen} Section 5.2) when K is a positive knot diagram we can compute the Rasmussen invariant directly from the canonical generators. Here we generalize this to the case of positive virtual knots.

\begin{lem}\label{prop:PosRas}
If K a virtual diagram with only positive crossings (i.e. K represents a positive virtual knot) then $s_{min}(K) = q(s_0) = (-r + n + 1)$ where $r$ is the number of cycles in $s_0$ and $n$ is the number of positive crossings for K.
\end{lem}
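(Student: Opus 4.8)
The plan is to transplant Rasmussen's treatment of positive classical knots (Section~5.2 of \cite{Rasmussen}) into the virtual Khovanov--Lee framework of Section~\ref{sec:LeeALDColoring}, exploiting two features of a \emph{positive} diagram: the underlying chain complex vanishes in negative homological degree, and the Seifert state realises the globally minimal quantum grading.

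First I would establish the lower bound $s_{min}(K)\ge q(s_0)$ by a purely combinatorial grading estimate. Since every classical crossing of $K$ is positive, the oriented resolution is the all-$A$-smoothing state; it is the unique state of homological degree $0$, it is the virtual Seifert state, and the number $r$ of its cycles coincides with the $r$ of Lemma~\ref{lem:SeifertGenus}. The key elementary observation is that a state $s$ with $i(s)=i$ has at most $r+i$ cycles, because passing from the all-$A$ state to $s$ changes the cycle count by $\pm1$ at each of the $i$ resmoothings; hence $\lambda(s)\ge -(r+i(s))$, so $j(s)=i(s)+\lambda(s)\ge -r$ and $q(s)=j(s)+(n_{+}-2n_{-})\ge -r+n$ for every enhanced state $s$, with equality for the state $s_0$ (all-$A$, every cycle labelled $X$). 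Thus $q(s_0)=-r+n$ is the least quantum grading occurring anywhere in $\widehat{\mathcal C}(K)$, so the filtration level of every nonzero class is $\ge q(s_0)$.

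Next I would prove the matching upper bound $s_{min}(K)\le q(s_0)$ using positivity. Because $n_{-}=0$, the complex $\widehat{\mathcal C}(K)$ vanishes in negative homological degree, so $\widehat{Kh}^{0}(K)$ carries no coboundaries and every nonzero class in it has a \emph{unique} cycle representative, whose filtration level is exactly the least quantum grading among its homogeneous summands. By Theorems~\ref{thm:GeneratorsforLee} and~\ref{thm:AltColoredSmoothingsforLee} a canonical generator $\mathfrak s_o$ is a nonzero such class, supported on $s_0$ with each cycle carrying $\mathbf r$ or $\mathbf g$, possibly barred (Lemma~\ref{lem:cutlocicancel}); expanding $\mathbf r=\tfrac12(1+X)$, $\mathbf g=\tfrac12(1-X)$, $\overline{\mathbf r}=\mathbf g$ in the $\{1,X\}$-basis, its lowest homogeneous summand is precisely the all-$X$ state $s_0$, of grading $q(s_0)$. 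Hence $s(\mathfrak s_o)=q(s_0)$, giving $s_{min}(K)\le q(s_0)$. Combining the two bounds yields $s_{min}(K)=q(s_0)=-r+n$, and then Proposition~\ref{prop:PropRasInvar} gives $\overline s(K)=-r+n+1=2g(S(K))$, which is the form needed for Theorem~\ref{thm:PositiveGenus}.

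The grading arithmetic is routine; the part that needs care is checking that every structural input survives virtualisation, namely that the bar operation, the cut loci, and the Grassmann sign refinement of Section~\ref{subsec:LGOrder} alter only signs, so that neither the homological range $[0,n]$, nor the identification of $\widehat{\mathcal C}^{0}(K)$ with enhanced states on the single all-$A$ flat resolution, nor the count $r$, nor the location of the canonical generators, is disturbed. I expect the upper bound to be the crux: one must be sure that $\mathfrak s_o$ is genuinely nonzero in $\widehat{Kh}(K)$ (exactly what Theorem~\ref{thm:AltColoredSmoothingsforLee} supplies) and that, after the cut-loci normalisation of Lemma~\ref{lem:cutlocicancel}, it is supported on $s_0$ so that its lowest summand really is the all-$X$ Seifert state; the absence of coboundaries in homological degree $0$ then makes $s(\mathfrak s_o)$ equal to that summand's grading without further work.
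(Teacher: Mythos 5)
Your argument is correct and rests on the same central observation as the paper: because $K$ is positive, the canonical generator $\mathfrak{s}_o$ lives entirely in homological degree~$0$, where there are no coboundaries, so it is the unique chain representative of its class and $s([\mathfrak{s}_o])$ equals the $q$-grading of its lowest homogeneous summand, which (after cut-loci normalisation and expanding $\mathbf{r}=\tfrac12(1+X)$, $\mathbf{g}=\tfrac12(1-X)$) is the all-$A$, all-$X$ state. Where you genuinely diverge is in the lower bound: the paper jumps from $s([\mathfrak{s}_o])=q(s_0)$ straight to $s_{min}(K)=q(s_0)$, implicitly invoking the structural fact (Rasmussen's Lemma~3.5 in the classical case, not restated for virtual knots in this paper) that canonical Lee generators always realise $s_{min}$. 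You instead supply a short, self-contained combinatorial estimate — every enhanced state of a positive diagram has at most $r+i(s)$ cycles, hence $j(s)\ge -r$ and $q(s)\ge -r+n$ — which bounds $s_{min}(K)$ from below without appealing to that structure theorem. In the positive setting this is a cleaner and more elementary closing of the gap the paper leaves tacit.

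One further point worth flagging: your arithmetic gives $q(s_0)=-r+n$, whereas the lemma as printed asserts $q(s_0)=-r+n+1$. Your value is the correct one. It agrees with the paper's own formula $q(s)=j(s)+(n_+-2n_-)$ together with $j(s_0)=0+(-r)$ and $n_+=n$, $n_-=0$; it agrees with the worked example where $s_{min}(K)=-1+2n$ for $r=1$ and $n_+=2n$; and it is what makes Theorem~\ref{thm:PositiveGenus} come out correctly via Proposition~\hyperref[RasInvarProp2]{\ref*{prop:PropRasInvar}.\ref*{RasInvarProp2}}, namely $\overline{s}(K)=s_{min}(K)+1=-r+n+1=2g_s(K)$. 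The ``$+1$'' in the printed lemma statement appears to be an off-by-one slip (it is the value of $\overline{s}(K)$, not of $s_{min}(K)$), and your proposal silently corrects it.
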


\begin{proof}
Let $s_0$ be a canonical generator of K. Since K is a positive virtual knot diagram $s_0$ appears in the A-state of the Khovanov complex. Since there are no generators in $\widehat{CKh}(K)$ with lower homological grading it follows that the only class homologous to $s_0$ is $s_0$ itself. Thus $s_{min}(K) = s([s_0]) = q(s_0)$.

To compute $q(s_0)$ we can, as above, consider $s_0$ where, instead of decorating by $red$ and $green$, we decorate all cycles by $X$.  Then $s_{min}(K) = q(s_0) = (-r + n + 1)$ where $r$ is the number of cycles in $s_0$ and $n$ is the number of positive crossings for K.
\end{proof}

\begin{rem}
If K is not a positive (or negative) virtual knot it can be difficult to compute the invariant directly from the canonical generators. This is not to say that the computation is not possible, but rather (in most cases) one must compute the Rasmussen invariant via the spectral sequence. We plan to return to this topic and consider the Rasmussen invariant for additional classes of virtual knots in a following paper.
\end{rem}

As an example of a calculation for a knot with both positive and negative crossings we return to the virtual Stevedore which was shown to be slice in Figure \ref{vstevedore}. To obtain the Rasmussen invariant one must in principle compute the spectral sequence and determine the remaining highest and lowest non-zero quantum filtration levels. In this case the computation can be simplified by noting that Manturov previously proved that Z-equivalence as pictured in Figure \ref{fig:ZEquiv} induces an isomorphism on the Khovanov complexes (\cite{ArbitraryCoeffs} Lemma 1). Notice that the virtual Stevedore is Z-equivalent to the figure-eight knot. Hence, since the Rasmussen invariant for the figure-eight knot is zero we have that the the Rasmussen invariant for the virtual Stevedore is also zero.

\begin{thm}[Generalization of Rasmussen \cite{Rasmussen} Theorem 4]\label{thm:PositiveGenus}
If K is a positive virtual knot and $g_s(K)$ as in Theorem \ref{thm:Ras1}, then $g_s(K)=\displaystyle\frac{(-r + n + 1)}{2}$ where $r$ is the number of virtual Seifert circles for K and $n$ is the number of classical crossings for K.
\end{thm}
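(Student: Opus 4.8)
The plan is to establish that $g_s(K)$ is pinched between $\frac{-r+n+1}{2}$ from above and $\frac{-r+n+1}{2}$ from below, so equality holds; this runs parallel to Rasmussen's classical argument but feeds entirely on the machinery assembled above.

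\textbf{Upper bound.} First I would invoke Lemma~\ref{lem:SeifertGenus}: the virtual Seifert surface $S(K)$, obtained by performing an oriented saddle at each of the $n$ classical crossings and capping the resulting $r$ virtual Seifert circles with discs, is a virtual surface in the four-ball bounding $K$ with genus $\frac{-r+n+1}{2}$. Capping only $r-1$ of those circles (i.e.\ performing one fewer death) converts $S(K)$ into a cobordism of the same genus from $K$ to the unknot, so $g_s(K)\le\frac{-r+n+1}{2}$. Positivity is not used in this direction.

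\textbf{Lower bound.} Here positivity enters, exactly as in Rasmussen. Because every classical crossing of $K$ is positive, the oriented (Seifert) smoothing of $K$ is the all-$A$ state, so a canonical generator $s_0$ of $\widehat{Kh}(K)$ sits in the lowest homological grading of the Khovanov--Lee complex; with nothing below it there is no cancellation, hence $[s_0]\neq 0$ and $[s_0]$ realizes $s_{min}(K)$. Lemma~\ref{prop:PosRas} then computes $s_{min}(K)$ by decorating every cycle of $s_0$ with $X$, and the cycles of $s_0$ are precisely the $r$ virtual Seifert circles of $K$, so the $r$ and $n$ there are the ones in the statement. Feeding this into Proposition~\ref{prop:PropRasInvar}.\ref{RasInvarProp2} gives $\overline{s}(K)=-r+n+1$, which equals $2\,g(S(K))\ge 0$ (a convenient consistency check against the upper bound). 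Theorem~\ref{thm:Ras1} now yields $-r+n+1=|\overline{s}(K)|\le 2g_s(K)$, i.e.\ $g_s(K)\ge\frac{-r+n+1}{2}$, matching the upper bound.

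\textbf{Where the work is.} The two-sided squeeze itself is short; the substance lies entirely in the lower bound, and specifically in the claim that $s_0$ is a non-trivial Khovanov--Lee class realizing $s_{min}(K)$. That is Rasmussen's key observation, and transplanting it requires knowing that the canonical generators produced by Theorem~\ref{thm:GeneratorsforLee} and reduced via the cut-loci cancellation of Lemma~\ref{lem:cutlocicancel} behave on ALDs with cross cuts just as their classical counterparts do. One must also keep the bookkeeping honest: that the ``number of cycles of $s_0$'' in Lemma~\ref{prop:PosRas} is the same $r$ as the ``number of virtual Seifert circles,'' and that the filtered degrees $-1$ of $m,\Delta$ and $+1$ of $\iota$ in $\mathcal{F}_3$ (used through Theorem~\ref{thm:Ras1}) are exactly what yields the factor $2g$. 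The upper bound is essentially formal once Lemma~\ref{lem:SeifertGenus} is granted.
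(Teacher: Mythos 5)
Your proof follows essentially the same route as the paper's: the upper bound is realized by the virtual Seifert surface of Lemma~\ref{lem:SeifertGenus}, and the lower bound comes from Lemma~\ref{prop:PosRas} together with Proposition~\ref{prop:PropRasInvar} and Theorem~\ref{thm:Ras1}, exactly as in Rasmussen. One bookkeeping remark: your value $\overline{s}(K)=-r+n+1$ is the consistent one (a direct computation gives $q(s_0)=j(s_0)+n_+=-r+n$, hence $\overline{s}=s_{min}+1=-r+n+1=2g(S(K))$), whereas Lemma~\ref{prop:PosRas} as printed states $q(s_0)=-r+n+1$, an off-by-one that would make the lower bound exceed the upper bound; you have implicitly corrected it.
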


\begin{proof}
By Lemma \ref{prop:PosRas} the Rasmussen invariant gives a lower bound. Lemma \ref{lem:SeifertGenus} shows that the lower bound is realized.
\end{proof}

\begin{figure}[h!]
\centering
    \includegraphics[width=.8\textwidth]{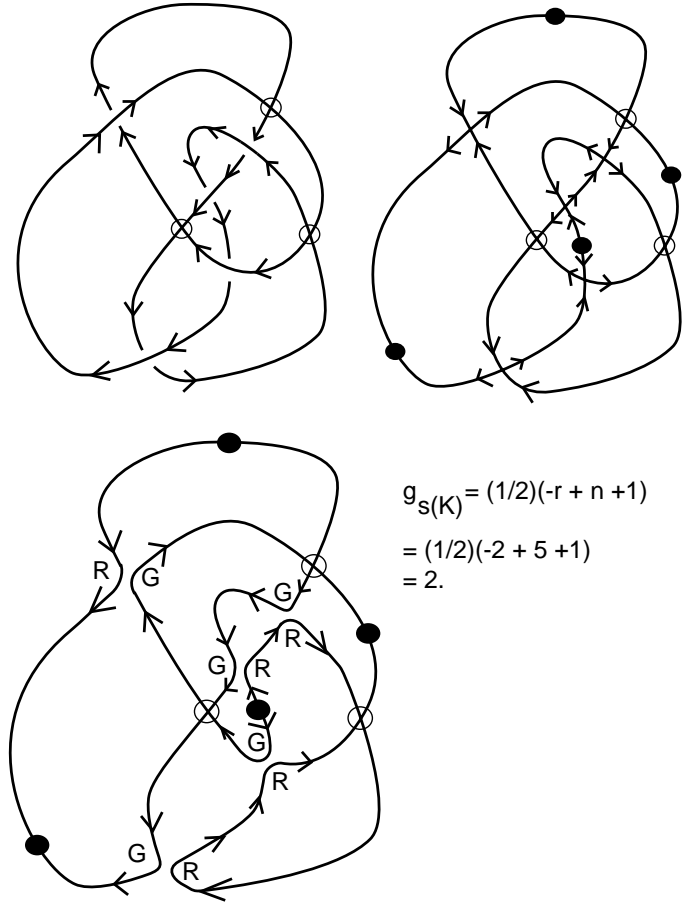}
    \caption{Example calculation for a positive knot}
\label{fig:LeeStateKnot}
\end{figure}

\begin{exa}
The entire process of calculating the genus of a positive knot can be depicted as in Figure \ref{fig:LeeStateKnot}. The upper left diagram depicts the virtual knot and the upper right diagram uses the canonical source-sink orientations to place cut loci on the semi-arcs of the knot diagram. After preforming the oriented smoothing (i.e. the all-A state for the knot diagram) we have one of the canonical generator for Lee's homology as shown in the bottom left, where the second canonical generator comes from swapping labels $\mathbf{r}$ and $\mathbf{g}$. Applying Theorem \ref{thm:PositiveGenus} as in the lower right, we see that $g_s(K)=2$.
\end{exa}

Finally, an interesting question is: ``Can the extension from the category of classical knots to virtual knots lower the slice genus?'' While we cannot give a complete answer, for many classes of knots we can say this is false. In particular, since the Rasmussen invariant presented here agrees with the Rasmussen invariant as originally defined for classical knots, any classical knot whose slice genus equals its Rasmussen invariant has the same slice genus in the virtual category. For instance, any (p,q)-torus knot has slice genus $\displaystyle\frac{(p-1)(q-1)}{2}$ in the virtual category.

\newpage
\appendix
\section{A Spanning Tree Expansion for Global Orders}\label{App:AppendixA}

To calculate the Khovanov homology of a virtual link diagram, we use a method called global order propagation. This insures that the global and local orders are consistent through out the calculation. We summarize this method as follows:
\begin{enumerate}
\item Choose an arbitrary order of the components of the all $A$-state.
\item Choose a spanning tree for the Khovanov complex
\item Use the rules given below to propagate the global order along the spanning tree.
\end{enumerate}

The independence of the above choices is clear. Changing the order on the all $A$-state induces a permutation between the two choices of orders, which in turn determines an isomorphism between the two complexes. Similarly, a change in spanning trees is also related by a permutation. This is inherent in the proof of anti-commutativity for each face.
In the notation below, bracketed numbers, such as $ [a] $, indicate the global order while numbers at smoothings indicate the local order (obtained from the source-sink notation in Figure \ref{fig:CanonicalOrientation}).

The following rules determine the propagation of the global ordering
\begin{figure} [htb] \centering
\begin{subfigure} [b] {0.5\linewidth} \centering
\def\svgwidth{2.5 in}
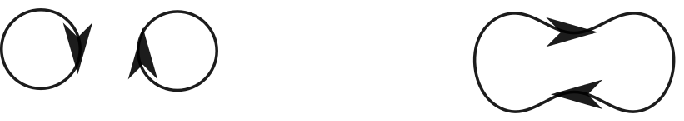
\caption{Under multiplication}
\end{subfigure}
\begin{subfigure} [b]   {0.5\linewidth} \centering
\def\svgwidth{2.5 in}
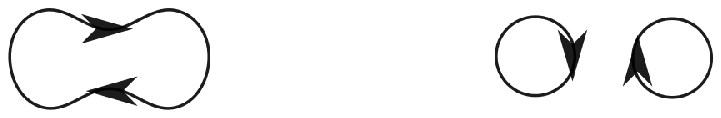
\caption{Under co-multiplication}
\end{subfigure}
\begin{subfigure} [b]  {0.5\linewidth} \centering
\def\svgwidth{2.5 in}
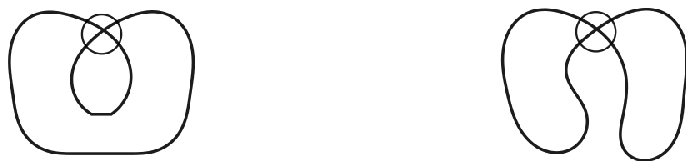
\caption{Under the $ \eta$ map}
\end{subfigure}
\caption{Propagating the basis ordering.}
\label{fig:rules}
\end{figure}

In these rules, we see that multiplication combines two cycles with orders $[a]$ and $[b]$. On the right hand side the order is determined by having the label $[a]$ persist on the new cycle, and all cycles having labels with values greater than $ b $ get reduced by $1$.  (Note that it is possible that the label $[a] $ is also reduced if $a > b$.)
Co-multiplication introduces cycles from a single cycle labeled $[a]$. On the right hand side, global labels greater than $a$ are increased by $1$. Then the cycle locally labeled $1$ receives the global label $[a]$ and the cycle locally labeled $2$ receives the global label $[a+1]$. In the case of the $ \eta $ map, the global label persists and no other change is made to the global order.

To show anti-commutativity and independence of the spanning trees, we consider the spanning trees along both sides of the face of a square. We use the following rules to determine
pre- and post-compositions along the differential.

\begin{figure}[htb]
\centering
\def\svgwidth{1.00 in} 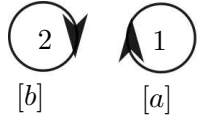
\caption{Two components with local and global basis orders}
\label{fig:preposttwocomponent}
\end{figure}
In the two component case, we compare the order of the global labels $ [a] $ and $[b]$ with the local labels. Here, we assume that the global label $[a]$ (respectively $[b]$) is associated with the local cycle $1$ (respectively the local label $2$). We consider the site $\tau$ in the state $s$.
The pre- or post-composition transposition, denoted $ P_{\tau}(s)$, is:
\begin{equation*}
 P_{\tau}(s) =
 \begin{cases}
(-1)^{a + b +1}    \text{ if } a < b  \\
(-1)^{a + b}   \text{ if } a > b
\end{cases}
\end{equation*}

\begin{figure} [htb] \centering
\def\svgwidth{1.00 in} 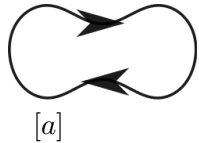
\caption{One component with global label $[a]$}
\label{fig:prepostonecomponent}
\end{figure}

In the single component case, the pre- and post-composition $ P_{\tau}(s) $ depends only on the value of
the global order:
\begin{equation*} P_{\tau}(s) = (-1)^{a +1 }.  \end{equation*}
Hence:
\[ \partial_{\tau}(s) =  P_{\tau}(s) \circ \partial_{\tau} \circ P_{\tau}(s')
\]
 where $\partial_{\tau}$ is the multiplication, comultiplication or single-cycle smoothing map occurring at $\tau$.

We compute two examples of squares obtained from the essential atoms. We leave the remaining cases as an exercise for the reader.

\begin{figure}[htb]
\centering
\def\svgwidth{1.00 in} 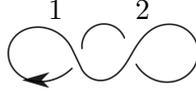
\caption{Example 1}
\label{fig:unknot2}
\end{figure}

We examine the complex (obtained from the two crossing unknot shown in Figure \ref{fig:unknot2}) in Figure \ref{fig:square1} . In this example, we assume that $ a < b$ in the global ordering.
We compute the two compositions
$ \partial_{1}(s') \circ \partial_{2}(s) $ and
$ \partial_{2}(s'') \circ \partial_{1}(s) $
and compare the results.
The global basis labels have been propagated using the rules shown in Figure \ref{fig:rules} and so there are two global basis orderings indicated in the last state.
We must multiply by $(-1)^{a + b +1} $ to move from one global basis to the other; we are required to exchange the basis labels $[a]$ and $[b-1]$ through transpositions until the global orderings are in agreement.

\begin{figure} [htb]
\centering
\[
\xymatrix{ \scalebox{0.8}{ 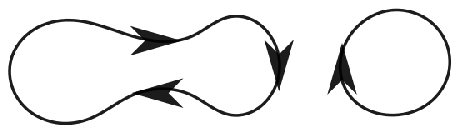} \ar[d]^-*+{\partial_{2}} \ar[r]^-*+{\partial_{1}} &\scalebox{0.8}{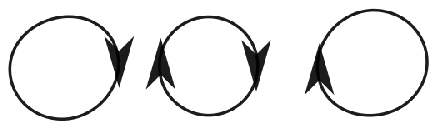}\ar[d]^-*+{\partial_{2}} \\
\scalebox{0.8}{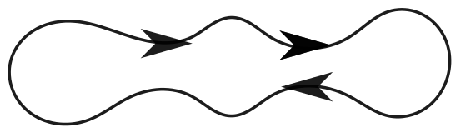} \ar[r]^-*+{\partial_{1}} &\scalebox{0.8}{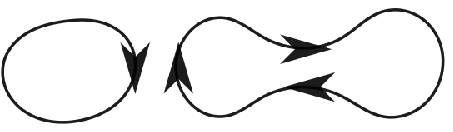}
}
\]
\caption{Anti-commuting square}
\label{fig:square1}
\end{figure}

The map $ \partial_{1}(s) $ indicates a $ \Delta $ map and its associated pre- and post compositions. The $ \Delta $ map acts on the loop with global order $[a]$ so that the pre-composition is $ (-1)^{a+1} $. The post-composition has the form $ (-1)^{ a + (a+1) +1} $ since the loop $[a]$ splits into two loops; giving rise to the labels $[a]$ and $ [a+1]$.  This necessitates an adjust in the global label of the other loop to $ [b+1]$.
Consequently,
\begin{equation} \label{arc1}
 \partial_{1}(s) = (-1)^{a + (a+1) + 1} \Delta (-1)^{a+1} = (-1) ^{a+1} \Delta .
\end{equation}
 We examine the diagram and determine the pre- and post composition for
$ \partial_{2}(s'')$:
\begin{align*}
\mathrm{pre:} (-1)^{(a+1) + (b+1)} \\
\mathrm{post:}(-1)^{b+1}
\end{align*}
Note that $ \partial_{2}(s'')$ involve multiplication, joining the loops labeled
$ [a+1]$ and $[b+1]$. One loop is eliminated and the loop labeled $[b+1]$ has local ordering $1$; the new loop is given ordering $[b]$ and labeling of all loops lower in the order is reduced.
Hence:
\begin{equation} \label{arc2}
 \partial_{2}(s'') = (-1)^{b+ 1} m  (-1)^{(a+1) + (b+1) } = (-1) ^{a+1} \Delta .
\end{equation}
Combining  Equations \ref{arc1} and \ref{arc2}, we determine that
\begin{equation} \label{upper}
\partial_{2}(s'') \circ \partial_{1}(s) = (-1)^0 m \circ \Delta.
\end{equation}
Now, we consider $ \partial_{2}(s) $ which acts on loops labeled $[a]$ and $[b]$.
Since the loops are reverse ordered with regard to the local order, the
pre-composition map is $ (-1)^{a + b}$. The post-composition map is determined by the global label $ [b-1]$ (inherited from the loop with local ordering $1$) and has value
$(-1)^{b} $.
We determine that
\begin{equation} \label{arc3}
\partial_{2}(s) = (-1)^{a+b} m (-1)^{b} = (-1)^a m.
\end{equation}
We evaluate the pre- and post-composition maps for $ \partial_{1}(s') $, noting that the sequence of maps has produced a global basis different from the other composition of maps.
The pre-composition map is $ (-1)^b$ since $\partial_{1}(s') $ involves the co-multiplication map. The post-composition map is $ (-1)^{ (b-1) + b +1} $ since the loops are globally labeled $[b-1]$ and $[b]$ and are not reverse ordered with regard to the local ordering.
We obtain
\begin{equation} \label{arc4}
\partial_{1}(s') = (-1)^{b} \Delta (-1)^{(b-1) + b + 1} = (-1)^b  \Delta
\end{equation}
Combining Equations \ref{arc3} and \ref{arc4}, we see that
\begin{equation}
\partial_{1}(s') \circ \partial_{2}(s)  = (-1)^{a+b}  \Delta \circ m.
\end{equation}

After applying the transition maps required to bring the global orderings into agreement,
we determine
\begin{equation}
(-1)^{a + (b-1) +1} \partial_{1}(s') \circ \partial_{2}(s) = (-1)^1  \Delta \circ m.
\end{equation}
so that the square anti-commutes.

\begin{figure}[htb]
\centering
\def\svgwidth{1.00 in} 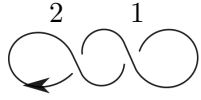
\caption{Example 2}
\label{fig:tripleloop}
\end{figure}

\begin{figure} [htb]
\centering
\[
\xymatrix{ \scalebox{0.8}{ 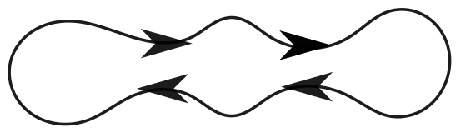} \ar[d]^-*+{\partial_{2}} \ar[r]^-*+{\partial_{1}} &\scalebox{0.8}{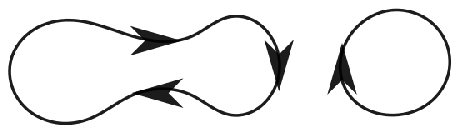}\ar[d]^-*+{\partial_{2}} \\
\scalebox{0.8}{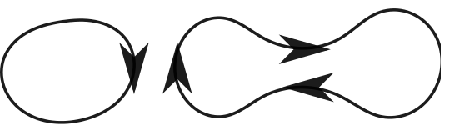} \ar[r]^-*+{\partial_{1}} &\scalebox{0.8}{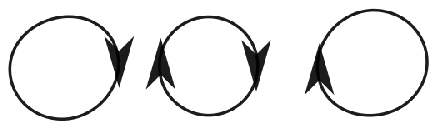}
}
\]
\caption{Anti-commuting square}
\label{fig:square2}
\end{figure}

In Figure \ref{fig:tripleloop}, we see a two crossing diagram that leads to one of the most complicated interactions between the local order and the global order as shown in Figure \ref{fig:square2}. In this square, the final state contains three loops and the labeling obtained by the propagation rules is different for all three loops. Here, we assume that the spanning tree agrees with the upper level and hence the local order on the upper level agrees the global order. As a result, the lower global order will need to be adjusted through a series of transpositions of labels.

In the initial state, there is a single loop with the global order label
$[a]$. The maps $ \partial_{1}(s) $ and $ \partial_{2}(s) $ both involve the co-multiplication,
so that the pre-composition map is $ (-1)^{a+1}$.
We determine that
\begin{equation}\label{2arc1}
\partial_{1}(s) = (-1)^{a + (a+1) +1 }  \Delta (-1)^{a+1}
\end{equation}
The propagation rules induce the illustrated global order.
Next, we compute that
\begin{equation} \label{2arc2}
\partial_{2}(s'') = (-1)^{ (a+1) + (a+2) + 1} \Delta (-1)^{(a+1) + 1}  .
\end{equation}
Combining Equations \ref{2arc1} and \ref{2arc2}, we determine that
\begin{equation*}
\partial_{2}(s'') \circ \partial_{1}(s) = (-1)^1  \Delta \circ \Delta.
\end{equation*}
Next,
\begin{equation}\label{2arc3}
\partial_{2}(s) = (-1)^{a + (a+1)+1}   \Delta (-1)^{a+1}
\end{equation}
and
\begin{equation}\label{2arc4}
\partial_{1}(s') = (-1)^{(a+1) + (a+2) + 1} \Delta (-1)^{(a+1) +1}.
\end{equation}
Using Equations \ref{2arc3} and \ref{2arc4}:
\begin{equation}
\partial_{1}(s') \circ \partial_{2}(s) = (-1) \Delta \circ \Delta.
\end{equation}
We note that the global order on the loops (from left to right) that was propagated by
the upper maps is $ [a], [ a+2],[a+1] $. The lower ordering is: $ [a+1], [a+2],[a] $.
We determine that a sequence of  three transpositions corrects the global order.
Hence the lower maps are
\begin{equation}
(-1)^3 \partial_{1}(s') \circ \partial_{2}(s) = \Delta \circ \Delta
\end{equation}
As a result, the square obtained from the diagram shown in Figure \ref{fig:tripleloop} anti-commutes.

%[shrink]{Bibliography}
%\nocite{*}
\bibliographystyle{abbrv}
\bibliography{KhovanovLeeVirtuals}

\theendnotes

\end{document}